\newcommand{\amsart}[1]{{#1}}
\newcommand{\svjour}[1]{}
\title{Exceptional Sequences on Rational $\CC^*$-Surfaces}
\author[A. Hochenegger]{Andreas Hochenegger}
\address{Mathematisches Institut,
        Universit\"at zu K\"oln,
        Weyertal 86--90,
        50931 K\"oln, Germany}
\email{ahochene@math.uni-koeln.de}
\author[N.O.~Ilten]{Nathan Owen Ilten}
\address{Department of Mathematics, University of California, Berkeley CA 94720, USA}
\email{nilten@math.berkeley.edu}
\author{Andreas Hochenegger \and Nathan Owen Ilten}
\institute{A. Hochenegger \at Mathematisches Institut, Universit\"at zu K\"oln, Weyertal 86--90, 50931 K\"oln, Germany\\\email{ahochene@math.uni-koeln.de}
\and
N.O. Illten \at Department of Mathematics, University of California, Berkeley CA 94720, USA\\\email{nilten@math.berkeley.edu} }
\newcommand{\danksagung}{We would like to thank David Ploog and the anonymous referee for a number of helpful comments.}
\definecolor{lcolor}{rgb}{1.0,0.3,0.0}
\newcommand{\CC}{\mathbb{C}}
\newcommand{\QQ}{\mathbb{Q}}
\newcommand{\ZZ}{\mathbb Z}
\renewcommand{\AA}{\mathbb{A}}
\newcommand{\PP}{\mathbb{P}}
\newcommand{\base}{\mathcal B}
\newcommand{\M}{\mathcal M}
\newcommand{\D}{\mathcal D}
\newcommand{\E}{\mathcal E}
\newcommand{\tv}{\mathbb{TV}}
\newcommand{\tX}{\widetilde{X}}
\newcommand{\cX}{\mathcal{X}}
\newcommand{\mcP}{\mathcal P}
\newcommand{\CO}{\mathcal O}
\newcommand{\F}{\mathcal F}
\newcommand{\tot}{\mathbf{tot}}
\newcommand{\pim}{\pi^\circ}
\newcommand{\bpim}{\bar\pi^\circ}
\newcommand{\bpimpr}{\overline{(\pi')}^{\,\circ}}
\newcommand{\inv}[1]{\left(#1\right)^{-1}}
\newcommand{\Ex}{\mathcal{E}}
\newcommand{\Ts}{\mathcal{A}}
\newcommand{\defto}{\leadsto}
\newcommand{\tilt}{\mathcal{T}}
\newcommand{\coloneqq}{\mathrel{\mathop:}=}
\DeclareMathOperator{\Pic}{Pic}
\DeclareMathOperator{\Proj}{Proj}
\DeclareMathOperator{\rk}{rk}
\DeclareMathOperator{\spec}{Spec}
\DeclareMathOperator{\ord}{ord}
\DeclareMathOperator{\pr}{pr}
\DeclareMathOperator{\Aut}{Aut}
\DeclareMathOperator{\Hom}{Hom}
\DeclareMathOperator{\Ext}{Ext}
\DeclareMathOperator{\dCDiv}{-Div}
\DeclareMathOperator{\Aug}{Aug}
\newcommand{\TCDiv}{\mathbb{C}^*\hspace{-.15cm}\dCDiv}
\DeclareMathOperator{\Mod}{-mod}
\DeclareMathOperator{\EN}{End}
\DeclareMathOperator{\End}{End}
\DeclareMathOperator{\T}{\mathcal{T}}
\DeclareMathOperator{\dP}{dP}
\DeclareMathOperator{\coker}{coker}
\theoremstyle{theorem}
\newtheorem{theorem}{Theorem}[section]
\newtheorem{mainthm}{Main Theorem}
\newtheorem{cor}{Corollary}[theorem]
\newtheorem{prop}[theorem]{Proposition}
\newtheorem{lemma}[theorem]{Lemma}
\theoremstyle{definition}
\newtheorem{definition}[theorem]{Definition}
\newtheorem{rem}[theorem]{Remark}
\newtheorem{ex}[theorem]{Example}
\newtheorem*{ex*}{Example}
\newcommand{\nocontract}{\bullet}
\newcommand{\contract}{\circ}
\begin{document}

\begin{abstract}
Inspired by Bondal's conjecture, we study the behavior of  exceptional sequences of line bundles on rational $\CC^*$-surfaces under homogeneous degenerations. In particular, we provide a sufficient criterion for such a sequence to remain exceptional under a given degeneration. We apply our results to show that, for toric surfaces of Picard rank 3 or 4, all full exceptional sequences of line bundles may be constructed via augmentation. We also discuss how our techniques may be used to construct noncommutative deformations of derived categories.
\end{abstract}

\keywords{Exceptional Sequences, Toric Varieties, Derived Categories, Degeneration}
\amsart{
\subjclass[2010]{Primary: 14M25, 14F05; Secondary: 14D06}
}
\svjour{
\subclass{Primary: 14M25, 14F05; Secondary: 14D06}
}
\maketitle
\section*{Introduction}
In their study of exceptional sequences of line bundles on rational surfaces in \cite{hillperl08}, L.~Hille and M.~Perling introduced so-called \emph{toric systems}. A toric system on a rational surface  of Picard number $\rho$ consists of a $(\rho+2)$-tuple of divisor classes satisfying certain conditions regarding their intersection numbers, see Definition~\ref{def:toricsystem}.
 In any case, every full exceptional sequence of line bundles on a rational surface gives rise to a toric system. On the other hand, in \cite{hillperl08} it was shown  how for any toric system $\Ts$ on a rational surface $X$,  one can construct an associated smooth toric variety $\tv(\Ts)$, see Section \ref{sec:toricsystems}.  A.~Bondal loosely conjectured  that there should be a degeneration from $X$ to $\tv(\Ts)$.
As the following example shows, this cannot be true if interpreted in a naive sense.

\begin{ex*}
	We consider the second Hirzebruch surface $\F_2=\Proj_{\PP^1}(\CO\oplus\CO(2))$. A full exceptional sequence is given by by 
$\E=(\CO,\CO(P),\CO(Q),\CO(P+Q))$ 
where $P$ is the fiber class and $Q$ is the divisor class such that $P.Q=1$ and $Q^2=2$. The corresponding toric system is $\Ts=(P,-P+Q,P,-P+Q)$ which has associated the toric surface $\F_0=\PP^1\times\PP^1$. However, $\F_2$ does not degenerate (i.e. specialize) to $\F_0$. On the contrary, $\F_0$ degenerates to $\F_2$. Note that although $\E$ was exceptional, it was not strongly exceptional.
\end{ex*}

The example suggests two modifications to the conjecture:
either one restricts to toric systems coming from \emph{strongly} exceptional sequences, or one does not differentiate between degeneration (i.e. specialization) and deformation (i.e. generalization). 

In the present paper, we take a slightly different tack. Instead of trying to associate a degeneration to some toric system, we \emph{start} with both a degeneration and a toric system, and observe the behaviour of the toric system with respect to this degeneration. We do this within the context of rational surfaces with $\CC^*$-action, where combinatorial techniques can be utilized.

Let us describe our approach more precisely.
Using the techniques introduced by R. Vollmert and the second present author in \cite{ilten:09b}, one can explicitly construct homogeneous one-parameter families with rational $\CC^*$-surfaces as the fibers. Given such a family $\pi\colon\cX\to\base$, there is a canonically defined  isomorphism $\bpim\colon\Pic(\cX_s)\to\Pic(\cX_0)$ between the Picard groups of a general fiber $\cX_s$ and the special fiber $\cX_0$, see  \cite{ilten:11e}; this isomorphism arises by lifting invariant line bundles on $\cX_s$ to $\cX$ and then restricting to $\cX_0$. 
We say that $\cX_0$ homogeneously deforms to $\cX_s$, or conversely, that $\cX_s$ homogeneously degenerates to $\cX_0$.

Since a toric system is just a tuple of divisor classes  and all surfaces considered are smooth,
a toric system {$\Ts$} on $\cX_s$ yields a tuple of divisor classes on $\cX_0$ via $\bpim$, which we will denote by $\bpim(\Ts)$, abusing notation slightly.
Conversely, a toric  system on $\cX_0$ yields a tuple of divisor classes on $\cX_s$ via $(\bpim)^{-1}$.
Our first main result is then the following:

\begin{mainthm}\label{mainthm:1}
	Let $X$ and $X'$ be two smooth, complete rational $\mathbb{C}^*$-surfaces both with Picard number $\rho>2$ and let $\Ts$ be a toric system on $X$. Then there is a sequence $$X=X^0\dashrightarrow X^1\dashrightarrow\cdots\dashrightarrow X^k=X'$$ of homogeneous deformations and degenerations connecting $X$ and $X'$ such that if $\Ts_i$ is the image of $\Ts$ on $X^i$, $\Ts_i$ is a toric system. Furthermore $\tv(\Ts_i)=\tv(\Ts)$ for all $i$.\end{mainthm}

	We then proceed to apply this machinery to toric systems on Hirzebruch surfaces, where exceptional sequences of line bundle are well understood. In particular, we relate  degenerations of toric systems to so-called \emph{mutations}, see Definition \ref{def:mutation}. 

	Although every full exceptional sequence of line bundles defines a toric system, not every toric system comes from such a sequence; we call a toric system \emph{exceptional} if it can be constructed from an  exceptional sequence of line bundles of length $\rk K_0(X) = \rho+2$. An important observation is that in the above setting, an exceptional toric system may in fact  degenerate to a non-exceptional toric system, see Remark \ref{rem:hirz0}.	

	For a special subset of exceptional toric systems, we can say more. We call a toric system on a rational surface \emph{constructible} if it can be constructed from an exceptional toric system on a Hirzebruch surface using the inductive process of augmentation from \cite{hillperl08}, see Definition \ref{def:augmentation}. Such toric systems are automatically exceptional and full, that is, they come from a full exceptional sequence. We show that for any rational surface $X$
	of fixed Picard number and any toric surface $Y$ with equal Picard number, there exists a constructible toric system $\Ts$ on $X$ with $\tv(\Ts)=Y$. Given a constructible toric system on some $\CC^*$-surface along with a degeneration, we then formulate a condition of \emph{compatibility}, which can be checked recursively, see Definition \ref{def:compatible}. The following theorem makes clear the importance of this condition:

\begin{mainthm}\label{mainthm:2}
	Let $\pi$ be a homogeneous deformation of rational $\CC^*$-surfaces with general fiber $\cX_s$ and let $\Ts$ be a constructible toric system on $\cX_s$. Then $\Ts$ is compatible with $\pi$ if and only if $\bpim( \Ts)$ is a constructible toric system. In particular, if $\Ts$ is compatible with $\pi$, then $\bpim (\Ts)$ is a full exceptional toric system.
\end{mainthm}

In light of this result, a natural question is if in fact all exceptional toric systems are constructible. For Picard rank $\rho=2$, this is true by definition. Using the machinery described above, we can show the following, which gives us an explicit description of all full exceptional sequences of line bundles on rational $\CC^*$-surfaces of rank less than $5$:
\begin{mainthm}\label{mainthm:3}
Let $X$ be a toric surface of Picard rank $3$ or $4$. Then any toric system on $X$ is exceptional if and only if it is constructible.
In particular, any exceptional sequence of line bundles of length $\rk K_0(X)$ is full.
 \end{mainthm}

On the other hand, we construct a toric surface of Picard rank $5$ which has a nonconstructible exceptional toric system, see Example \ref{ex:nonconstr:5}. 
Finally, we also discuss how homogeneous geometric deformations may be used to construct noncommutative deformations, that is, parametrizations of derived categories of rational surfaces; several such parametrizations have already been described in \cite{perling09}.

We now describe the organization of this paper.  Section \ref{sec:prelim} contains basics on toric varieties and $\CC^*$-surfaces, as well as recalling the necessary results concerning homogeneous deformations adapted to our setting. In Section \ref{sec:def}, we further analyze homogeneous deformations of $\CC^*$-surfaces. Basic definitions for exceptional sequences, toric systems, and augmentation may be found in Section \ref{sec:exceptional}. In Section \ref{sec:exdef} we apply the machinery we have introduced and prove Main Theorems \ref{mainthm:1} and \ref{mainthm:2}. In Section \ref{sec:constr:ts}, we focus on constructible toric systems and prove our Main Theorem \ref{mainthm:3}.
Finally, Section \ref{sec:noncomdef} contains a brief discussion of noncommutative deformations. 
\amsart{
\\
\\
\noindent\emph{Acknowledgements}: \danksagung 
}

\section{Rational $\CC^*$-Surfaces}\label{sec:prelim}
In this section, we introduce notation and some necessary results concerning rational $\CC^*$-surfaces.  We begin by recalling some basic concepts from toric geometry.  We then describe rational $\CC^*$-surfaces in terms of multidivisors.  Finally, we related homogeneous deformations and degenerations of these surfaces to so-called degeneration diagrams.

\subsection{Toric Basics}
We begin by recalling some basics of toric geometry, see for example \cite{fulton:93a}.
Let $N$ be a lattice with dual $M$, and let $N_\QQ$ and $M_\QQ$ be the associated $\QQ$-vector spaces.
For any polyhedral subdivision $S$ in $N_\QQ$, and any $k\in \ZZ_{\geq 0}$, let $S(k)$ be the set of $k$-dimensional elements of $S$.
Consider a pointed polyhedral cone $\sigma\subset N_\QQ$. We say that $\sigma$ is \emph{smooth} if the primitive generators of rays of $\sigma$ can be completed to a lattice basis of $N$. Given a pointed polyhedral cone $\sigma\subset N_\QQ$, we associate an affine toric variety
$$
\tv(\sigma)\coloneqq\spec \CC[\sigma^\vee\cap M]
$$
whose dimension equals the rank of $N$. An affine toric variety is smooth exactly when the corresponding cone is smooth. 

Recall that a \emph{fan} $\Sigma$ is a set of cones forming a polyhedral subdivision. Given a fan $\Sigma$, we can construct a toric variety
$$
\tv(\Sigma)\coloneqq\bigcup_{\sigma\in \Sigma} \tv(\sigma)/\sim,
$$
where $\sim$ means that we glue the open subsets $\tv(\sigma)$ along common localizations in the fraction field of $\CC[M]$. We call a fan \emph{smooth} if all elements are smooth; we call a fan \emph{complete}  if the union of all elements is the vector space $N_\QQ$.  The variety $\tv(\Sigma)$ is smooth, respectively, complete, if and only if $\Sigma$ is smooth, respectively, complete.

Invariant prime divisors of $X=\tv(\Sigma)$ correspond to rays of $\Sigma$. For $\rho\in\Sigma{(1)}$, let $D_\rho$ denote the corresponding divisor. By an abuse of notation, we will use the same symbol to denote a ray and its primitive lattice generator. 
Suppose now that $X=\tv(\Sigma)$ is a smooth complete surface, i.e. $\Sigma$ is smooth and complete. Then we can order the rays of $\Sigma$ in some counterclockwise order $\rho_0,\ldots,\rho_l$. Let $-b_i$ be the self-intersection number of $D_{\rho_i}$. Then using cyclical notation we have  $b_i\rho_i=\rho_{i-1}+\rho_{i+1}$. In particular, up to lattice isomorphism, the numbers $b_i$ determine the fan $\Sigma$, and thus determine $X$ up to equivariant isomorphism.  We will thus also use the notation $\tv(b_0,\ldots,b_l)$ to denote $X$. Note that the Picard number $\rho(X)$ is equal to $l-1$, and we have the equality $\sum b_i=3l-9$.

Let $c_1,c_2,\ldots,c_k\in\mathbb{Z}$. The continued fraction $[c_1,c_2,\ldots,c_k]$ is inductively defined as follows if no division by $0$ occurs: $[c_k]=c_k$,  $[c_1,c_2,\ldots,c_k]=c_1-1/[c_2,\ldots,c_k]$. 
Now consider some $l+1$ tuple $(b_0,\ldots,b_{l})$ defining a smooth toric surface with corresponding fan $\Sigma$ in $N_\QQ$. Suppose that $b_0<0$ and $l>2$. Using induction on $l$, one can easily show that there exists a unique index $\alpha$, $1<\alpha<l$ such that $[b_1,\ldots,b_{\alpha-1}]$ is well defined and equals zero, or equivalently, that $\rho_\alpha=-\rho_0$. If we are in this situation, we define
\begin{equation}\label{eqn:gamma}
	\gamma=\sum_{i=1}^{\alpha-1}(3-b_i)-3.
\end{equation}
We will use the following lemma in the proof of Theorem \ref{thm:toricdef}:
\begin{lemma}\label{lemma:gamma}
	We always have $\gamma \geq 0$. Likewise, $b_0+b_\alpha-\gamma \geq 0$. Finally, for $R\in M$ such that $\langle \rho_\alpha ,R\rangle =1$, we have $\langle \rho_{\alpha-1},R\rangle-\langle \rho_{1},R\rangle=\gamma$.
\end{lemma}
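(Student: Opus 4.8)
The plan is to choose an explicit lattice basis and reduce all three assertions to elementary statements about the first coordinates of the rays neighbouring $\rho_0$ and $\rho_\alpha$. Using an element of $GL_2(\ZZ)$ (and, if necessary, a reflection to preserve the counterclockwise orientation), I would normalize so that $\rho_0 = (1,0)$; since $\rho_\alpha = -\rho_0$ we then have $\rho_\alpha = (-1,0)$. Smoothness of the cones adjacent to $\rho_0$ and to $\rho_\alpha$ on either side, together with the counterclockwise ordering, forces the relevant neighbours to lie on the lines $y = \pm 1$: writing $\rho_1 = (x_1, 1)$, $\rho_{\alpha-1} = (x_{\alpha-1}, 1)$, $\rho_{\alpha+1} = (x_{\alpha+1}, -1)$ and $\rho_l = (x_l, -1)$. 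Evaluating the defining relations $b_i\rho_i = \rho_{i-1}+\rho_{i+1}$ at $i = 0$ and $i = \alpha$ (with cyclic indices) and reading off first coordinates gives immediately $b_0 = x_l + x_1$ and $b_\alpha = -(x_{\alpha-1}+x_{\alpha+1})$.

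The crux is the identity
\[
\gamma = x_1 - x_{\alpha-1}.
\]
To prove it I would form the symmetric completion $\mathcal G$: the complete fan whose rays are $\rho_0, \rho_1,\dots,\rho_{\alpha-1}, \rho_\alpha$ together with their negatives $-\rho_1,\dots,-\rho_{\alpha-1}$, taken in counterclockwise order. One checks directly (from the determinant computations above) that $\mathcal G$ is smooth and complete, that its self-intersection numbers at the interior rays $\pm\rho_i$ with $1\le i\le \alpha-1$ all equal $b_i$ by the symmetry $v\mapsto -v$, and that at each of the two boundary rays $\rho_0,\rho_\alpha$ the self-intersection number equals $x_1 - x_{\alpha-1}$ (read off from $b^{\mathcal G}_0\rho_0 = -\rho_{\alpha-1}+\rho_1$ and the analogous relation at $\rho_\alpha$). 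Since $\mathcal G$ has $2\alpha$ rays, applying the global relation $\sum b_i = 3l - 9$ to $\mathcal G$ yields $2(x_1-x_{\alpha-1}) + 2\sum_{i=1}^{\alpha-1} b_i = 6\alpha - 12$, and rearranging gives exactly $\gamma = \sum_{i=1}^{\alpha-1}(3-b_i) - 3 = x_1 - x_{\alpha-1}$. Alternatively, the identity can be proved by induction on $\alpha$, blowing down an interior ray of self-intersection $-1$ and tracking how both sides change; this identity is the step I expect to require the most care.

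With the identity in hand the three claims follow. For the last assertion, any $R$ with $\langle \rho_\alpha, R\rangle = 1$ satisfies $\langle\rho_0,R\rangle = -1$, and since $\rho_1 - \rho_{\alpha-1} = (x_1 - x_{\alpha-1})\rho_0$ we get $\langle\rho_{\alpha-1},R\rangle - \langle\rho_1,R\rangle = x_1 - x_{\alpha-1} = \gamma$, which in particular shows the difference is independent of the choice of $R$. For $\gamma\ge 0$, the rays $\rho_1$ and $\rho_{\alpha-1}$ both lie on $y=1$, where counterclockwise order corresponds to decreasing first coordinate, so $x_1 \ge x_{\alpha-1}$ and hence $\gamma \ge 0$. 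Finally, combining the formulas for $b_0$, $b_\alpha$ and $\gamma$ gives
\[
b_0 + b_\alpha - \gamma = (x_l + x_1) - (x_{\alpha-1} + x_{\alpha+1}) - (x_1 - x_{\alpha-1}) = x_l - x_{\alpha+1},
\]
and $\rho_{\alpha+1}, \rho_l$ both lie on $y = -1$, where counterclockwise order corresponds to increasing first coordinate, so $x_l \ge x_{\alpha+1}$ and therefore $b_0 + b_\alpha - \gamma \ge 0$.
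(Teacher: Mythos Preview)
Your proof is correct. The paper's own argument is a one-line appeal to induction on $l$, with no details given; your approach is genuinely different and more explicit. After normalizing so that $\rho_0=(1,0)$ and $\rho_\alpha=(-1,0)$, you reduce everything to the single identity $\gamma = x_1 - x_{\alpha-1}$, and establish that by a global self-intersection count on the auxiliary centrally symmetric fan $\mathcal G$. This is a nice trick: it replaces an inductive bookkeeping argument by one application of the relation $\sum b_i = 3(\#\text{rays})-12$, and gives $\gamma$ the concrete interpretation as $b_0^{\mathcal G}=b_\alpha^{\mathcal G}$ in the symmetrized surface. The three claims then become order statements about first coordinates on the lines $y=\pm 1$, which is immediate from the counterclockwise arrangement. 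The induction the paper alludes to (and which you also sketch as an alternative for the key identity) would contract a $(-1)$-curve among $D_{\rho_1},\dots,D_{\rho_{\alpha-1}}$ and track how both sides change; your direct argument avoids this and is cleaner.
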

	\begin{proof}
		All statements can be easily shown by induction on $l$.		\end{proof}
\subsection{Multidivisors and Rational $\CC^*$-Surfaces}\label{sec:multidiv}
By a \emph{$\CC^*$-surface}, we mean a complete surface with an effective action by the multiplicative group $\CC^*$. Thus, $\CC^*$-surfaces provide a  natural generalization of toric surfaces. 
These surfaces have been studied extensively, see for example \cite{orlik:77a} and \cite{MR2327238}.
It turns out that $\CC^*$-surfaces correspond to so-called \emph{multidivisors}.
\begin{definition}
	A \emph{multidivisor} $\M$ on a smooth projective curve $Y$ consists of proper polyhedral subdivisions $\M_P$ of $\QQ$ for every $P\in Y$ together with a pair  $(\M_-,\M_+)\in \{\contract,\nocontract\}^2$ such that:
	\begin{enumerate}
		\item Only finitely many $\M_P$ differ from the trivial subdivision induced by a single vertex at $0$;
		\item If $\M_-=\contract$, then $\sum_{P\in Y} \min \M_P(0)  < 0;$
		\item If $\M_+=\contract$, then $\sum_{P\in Y} \max \M_P(0)  > 0.$
	\end{enumerate}
	Here $\M_P(0)$ denotes the set of vertices of $\M_P$.
\end{definition}
The  $\CC^*$-surface $X(\M)$ corresponding to a multidivisor $\M$ may be constructed as in \cite[Proposition 1.6]{ilten:09d} by associating a so-called \emph{divisorial fan} to $\M$ which describes an open covering and gluing relations. The resulting surface is birational to the product $Y\times\PP^1$.
Describing this construction would take us too far afield, so we will instead give an ad hoc description of $X(\M)$ for the rational case, i.e. the case $Y=\PP^1$. We do this by gluing together localizations of toric varieties. As we will see below, the subdivisions $\M_P$ encode information about the fibers of a blowup $\tX(\M)$ of $X(\M)$, and the pair $(\M_-,\M_+)$ tells us what needs to be contracted to get back to $X(\M)$.

Consider $\M$ to be a multidivisor on $\PP^1$.
Let $\mcP\subset \PP^1$ be a finite set of order at least two containing all $P\in\PP^1$ with $\M_P$ nontrivial.
Let $U=\PP^1\setminus \mcP$, and for any  $P\in\mcP$, set $U_P=U
\cup 
\{P
\}$. Let $\phi_P$ be any automorphism of $\PP^1$ with $\phi_P(P)=0$ and $\infty\in\phi_P(\mcP)$.   Let $\Sigma_P$ be the fan generated by closures of cones of the form $\QQ_{\geq 0} \cdot (I,1)\subset\QQ^2$ for any interval $I\in\M_P(1)$, where $\M_P(1)$ denotes the set of one-dimensional polyhedron in $\M_P$. Projection of $\Sigma_P$ onto the second factor of $\QQ$ induces a map $\pr_P\colon\tv(\Sigma_P)\to\PP^1$.

Now, let $X_P=\pr_P^{-1}(\phi_P(U_P))$, and $Z_P=\pr_P^{-1}(\phi_P(U))$. Note that 
$$Z_P=\phi_P(U)\times\PP^1\cong U\times \PP^1.$$
We can thus construct a variety $\tX(\M)$ by gluing together the open charts $X_P$ along $Z_P$ via the above isomorphism. The $\CC^*$-action on the fibers of the maps $\pr_P$ are preserved by this gluing.
The cocharacter lattice of this torus $\CC^*$ can be naturally identified with $\ZZ$.
 There are two projective lines of fixed points under this action in $\tX(\M)$, one corresponding to the ray $\QQ_{\geq 0}$, the other to the ray $\QQ_{\leq 0}$. To construct $X(\M)$ from $\tX(\M)$, we contract the first, respectively, second of these curves if $\M_+=\contract$ or $\M_-=\contract$. This construction doesn't depend on the choice of $\mcP$ or the $\phi_P$. 

From the above description of $X(\M)$, it is easy to see what the invariant prime Weil divisors are, see also \cite{petersen:08a}. Indeed, there are divisors $D_+$ and $D_-$ consisting of fixpoints if $\M_+$, respectively $\M_-$ equals $\nocontract$, and for every $P\in \PP^1$ and $v\in\M_P$, there is a corresponding prime divisor $D_{P,v}$ lying in the fiber over $P$ of the rational map $X(\M)\dashrightarrow\PP^1$. From the above description, it is also clear that $X(\M)$ is toric if and only if $\mcP$ can be chosen to have order two, say $\mcP=\{P,Q\}$. In this case, $X(\M)=\tv(\Sigma)$, where $\Sigma$ is the complete fan with rays through $(v,1)$ for $v\in\M_P(0)$, $(v,-1)$ for $v\in \M_Q(0)$, and $(1,0)$ or $(-1,0)$ if $\M_+$ respectively $\M_-$ equals $\nocontract$. 

\begin{figure}[htbp]
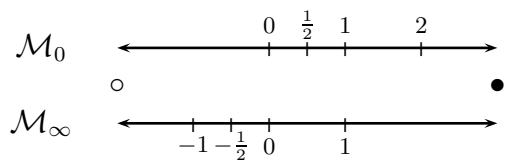

	\begin{center}	\exmulti
	\end{center}
    \caption{A possible multidivisor on $\PP^1$.}\label{fig:exmulti}
\end{figure}
\begin{ex}[A blowup of $\PP^2$]
	Consider the multidivisor pictured in Figure \ref{fig:exmulti}, where $\M_-$ and $\M_+$ respectively correspond to the dots on the left and right.
In this example, $X(\M)$ is the rational $\CC^*$-surface obtained by taking the toric variety $\PP^2$, blowing up in two of the three toric fixpoints, and then blowing up in the four resulting fixpoints of the exceptional divisors. Since this multidivisor only has two non-trivial slices, $X(\M)$ is in fact still toric.
\end{ex}

We now characterize multidivisors $\M$ on $\PP^1$ giving smooth $\CC^*$-surfaces.
 We say that $\M$ is \emph{smooth in the middle} if the cone  $\QQ_{\geq 0}\cdot(I,1)$ is  smooth  for all $P\in\PP^1$ and $I\in\M_P(1)$ compact. Suppose that $\M_+=\nocontract$. Then we say that $\M$ is \emph{smooth on the right} if $\QQ_{\geq 0}\cdot(I,1)$  is smooth if for all $P\in\PP^1$ and $I\in\M_P(1)$ unbounded on the right. Suppose instead that $\M_+=\contract$.  Then we say that $\M$ is \emph{smooth on the right} if there exists $f_+\in\CC(\PP^1)$ and $\phi_+\in\Aut(\PP^1)$ such that 
$$
\max \M_{\phi_+(P)}(0)+\ord_P(f_+)=0,\qquad P\neq 0,\infty
$$
and the cone
$$
\sigma_+=\langle (\max \M_{\phi_+(0)}(0),1),(\max \M_{\phi_+^{-1}(\infty)}(0),-1)\rangle\subset\QQ^2
$$
is smooth.\footnote{Such a pair $f_+,\phi_+$ induce an isomorphism of the neighborhood of the image of $D_+$ in $X(\M)$ with $\tv(\sigma_+)$.}
We define \emph{smooth on the left} similarly. Finally, we say that $\M$ is \emph{smooth} if it is smooth on the left, right, and in the middle.

\begin{prop}[cf. {\cite[Proposition 5.1 and Theorem 5.3]{liendo:10a}}]\label{prop:smooth}
Let $\M$ be a multidivisor on $\PP^1$.  Then the rational $\CC^*$-surface $X(\M)$ is smooth if and only if $\M$ is smooth.
\end{prop}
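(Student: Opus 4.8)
The plan is to use that smoothness is a local property and to verify it separately on the three kinds of loci appearing in the construction of $X(\M)$: the interiors of the fibers, a neighborhood of the curve $D_+$, and a neighborhood of $D_-$. Each of these is covered by the toric charts coming from the fans $\Sigma_P$, so in every case smoothness will reduce to the standard toric criterion that the primitive generators of the relevant two-dimensional cone form a lattice basis. The three conditions entering the definition of ``$\M$ smooth'' are tailored to these three loci, so the proof is essentially a matching of each condition to its locus.

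First I would treat the charts of $\tX(\M)$ directly. Since $\tX(\M)$ is obtained by gluing the toric opens $X_P=\pr_P^{-1}(\phi_P(U_P))$ along the isomorphic pieces $Z_P\cong U\times\PP^1$, and the gluing maps are isomorphisms, smoothness at a point of $\tX(\M)$ is equivalent to smoothness of the cone of $\Sigma_P$ containing it. The compact intervals $I\in\M_P(1)$ give cones $\QQ_{\geq 0}\cdot(I,1)$ meeting neither section of fixed points; smoothness of all of these is exactly ``smooth in the middle'', and it governs the interior points of the special fibers. For trivial $\M_P$ the only cones are the standard $\pos\{(0,1),(1,0)\}$ and $\pos\{(0,1),(-1,0)\}$, so generic fibers are automatically smooth. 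If $\M_+=\nocontract$, then $D_+$ survives in $X(\M)=\tX(\M)$ and is covered exactly by the cones $\QQ_{\geq 0}\cdot(I,1)$ with $I$ unbounded on the right (those containing the ray $(1,0)$, i.e.\ $\QQ_{\geq 0}$); smoothness along $D_+$ is then precisely ``smooth on the right'', and symmetrically for $D_-$ and ``smooth on the left''.

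It remains to analyse the contracted case, say $\M_+=\contract$, which I expect to be the main obstacle. Here $X(\M)$ arises from $\tX(\M)$ by contracting the $\PP^1$ of fixed points $D_+$ to a point $x_+$, and since smoothness away from $x_+$ is handled as above, the only new issue is smoothness at $x_+$. I would use the pair $(f_+,\phi_+)$ provided in the definition: the identity $\max\M_{\phi_+(P)}(0)+\ord_P(f_+)=0$ for $P\neq 0,\infty$ says that, after reparametrising the base by $\phi_+$ and adding the principal divisor $\Div(f_+)$ (equivalently, twisting the fiber coordinate by $f_+$, an isomorphism of $\tX(\M)$), every right endpoint except those over $0$ and $\infty$ is normalised to $0$. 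A neighborhood of $D_+$ thereby becomes toric with exactly the two boundary rays through $(\max\M_{\phi_+(0)}(0),1)$ and $(\max\M_{\phi_+^{-1}(\infty)}(0),-1)$, and contracting $D_+$ identifies this neighborhood with the affine toric surface $\tv(\sigma_+)$ — this is precisely the content of the footnote. Hence $x_+$ is smooth if and only if $\sigma_+$ is smooth, which is ``smooth on the right'' in the contracted case; the symmetric argument over $D_-$ gives ``smooth on the left''.

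Assembling the three local analyses, $X(\M)$ is smooth exactly when $\M$ is smooth in the middle, on the right, and on the left, i.e.\ when $\M$ is smooth. The delicate point throughout is the contraction step: one must check that the twist by $f_+$ genuinely yields a toric neighborhood with the two prescribed rays, and that the whole construction is independent of the auxiliary choices (the set $\mcP$, the automorphisms $\phi_P$, and now $f_+,\phi_+$). For this I would lean on the divisorial-fan description of \cite{ilten:09d} and the normal-form results of \cite{liendo:10a} cited in the statement, rather than recomputing the gluing data by hand.
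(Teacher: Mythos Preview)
The paper does not give its own proof of this proposition: it is stated with the attribution ``cf.\ \cite[Proposition 5.1 and Theorem 5.3]{liendo:10a}'' and followed immediately by ``For the remainder of the article, we will only be dealing with smooth rational multidivisors.'' There is no \texttt{proof} environment to compare against.

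Your sketch is a correct and natural unpacking of why the three conditions in the definition of a smooth multidivisor correspond to the three kinds of loci on $X(\M)$, and it is exactly the sort of argument the cited reference carries out in the general complexity-one setting. The one place I would tighten is the converse direction in the contracted case: your argument uses the pair $(f_+,\phi_+)$ to normalise a neighborhood of $x_+$ to $\tv(\sigma_+)$ and then reads off smoothness of $\sigma_+$, but this is the implication ``$\M$ smooth on the right $\Rightarrow$ $X(\M)$ smooth at $x_+$''. For the other direction you must \emph{produce} such a pair from the assumption that $X(\M)$ is smooth at $x_+$; in particular you need that the right-most vertices $\max\M_P(0)$ are integers at all but two points of $\PP^1$ (so that a rational function $f_+$ with the prescribed orders exists). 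This does follow, but it requires a short extra argument. Since you explicitly defer the technical verification to \cite{liendo:10a}, and the paper itself does no more than cite that reference, your proposal is entirely adequate here.
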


For the remainder of the article, we will only be dealing with smooth rational multidivisors.

\subsection{Degeneration Diagrams}
In \cite{ilten:09b}, R. Vollmert and the second author have shown how to construct homogeneous deformations of rational varieties with codimension-one torus action via certain combinatorial methods. For the case of present interest, i.e. smooth rational $
\CC^*$-surfaces, this combinatorial data may be encoded in a so-called \emph{degeneration diagram}:
\begin{definition}
Let $\M$ be a smooth rational multidivisor on $\PP^1$, and choose some $s\in\PP^1\setminus\{0,\infty\}$. A degeneration diagram for $\M$ consists of the pair $(\M,G)$, where $G$ is a connected graph on the vertices of $\M_0$ and $\M_s$ such that
\begin{enumerate}
\item $G$ is bipartite with respect to the natural partition induced by $\M_0$ and $\M_s$;
\item $G$ can be realized in the plane with all edges being line segments, after embedding $\M_0$ and $\M_s$ in parallel lines;
\item Every vertex of $G$ with valency strictly larger than one is a lattice point.
\end{enumerate}
\end{definition}

Given a degeneration diagram $(\M,G)$, we may construct a new smooth rational multidivisor $\M^{(0)}$ as follows. Let $\M_+^{(0)}=\M_+$, $\M_-^{(0)}=\M_-$, and $\M_P^{(0)}=\M_P$ for $P\neq 0,s$.  We take $\M_s^{(0)}$ to be the trivial subdivision of $\QQ$. Finally, we take $\M_0^{(0)}$ to be the complete subdivision of $\QQ$ with vertices of the form $v_0+v_s$, where $\overline{v_0v_s}$ is an edge of $G$. Note that the graph $G$ encodes an admissible \emph{Minkowski decomposition} of the subdivision $\M_0^{(0)}$, see for example \cite[Definition 1.3]{ilten:11e}.

Set $\base=\AA^1\setminus(\mcP\setminus \{0,s\})$, where $\mcP$ is the set of all $P\in\PP^1$ with $\M_P$ nontrivial. For $t\in \base$, $t\neq 0$, we may also construct a new smooth rational multidivisor $\M^{(t)}$ by exchanging $\M_s$ and $\M_t$.

\begin{prop}[{\cite[Theorem 4.4]{ilten:09b}}]
	Consider a degeneration diagram $(\M,G)$.  Then there is a flat family $\pi\colon\cX\to\base$ such that $\pi^{-1}(t)=X(\M^{(t)})$ for all $t\in\base$. In particular, $\pi^{-1}(s)=X(\M)$ and $\pi^{-1}(0)=X(\M^{(0)})$. 
\end{prop}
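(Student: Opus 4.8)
The plan is to feed the admissible Minkowski decomposition encoded by $G$ into the deformation construction of \cite{ilten:09b} and then to read off its fibers. Recall the general mechanism behind \cite[Theorem 4.4]{ilten:09b}: for a rational variety with codimension-one torus action, presented by a divisorial fan on $\PP^1$, a decomposition of one polyhedral coefficient as a Minkowski sum of lattice pieces gives rise to a flat one-parameter family. The central fiber carries the undecomposed (Minkowski-sum) coefficient at the distinguished point, while every other fiber transports one of the summands to a moving point of the base curve $\PP^1$, leaving the rest in place. By the discussion preceding the statement, the graph $G$ supplies exactly such a decomposition $\M_0^{(0)}=\M_0+\M_s$ of the slice at $0$, with summands $\M_0$ and $\M_s$.

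First I would convert the multidivisor picture of Section \ref{sec:multidiv} into a divisorial fan, so that $X(\M^{(0)})$ is presented by the polyhedral divisors attached to the charts $\tv(\Sigma_P)$. The decomposition of $\M_0^{(0)}$ recorded by $G$ then restricts to a Minkowski decomposition of each relevant coefficient, compatibly across the overlaps $Z_P$. The three conditions defining a degeneration diagram are precisely what makes this input admissible in the sense of \cite{ilten:09b}: bipartiteness singles out the two summands $\M_0$ and $\M_s$; realizability by line segments ensures that the summands are honest subdivisions of $\QQ$; and the lattice condition at vertices of valency $>1$ ensures that the summands are lattice subdivisions, as the construction requires.

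With this input I would apply \cite[Theorem 4.4]{ilten:09b} to obtain a flat family $\pi\colon\cX\to\base$. It then remains to match the fibers. The summand transported to the moving point $t\in\base$ is $\M_s$, while $\M_0$ stays at $0$; all other slices $\M_P$ with $P\neq 0,s$, together with the data $(\M_-,\M_+)$, are carried along unchanged. For $t\neq 0$ this places $\M_s$ at $t$ and makes the slice at $s$ trivial, which is exactly the multidivisor $\M^{(t)}$ obtained from $\M$ by exchanging $\M_s$ and $\M_t$; in particular the fiber over $s$ is $X(\M)$. As $t\to 0$ the transported summand collides with $\M_0$ and they recombine into $\M_0^{(0)}$, giving the central fiber $X(\M^{(0)})$. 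Flatness is part of the conclusion of the cited theorem.

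The step carrying the real weight is the chart-wise translation together with the verification that the induced Minkowski decompositions glue to a single admissible deformation datum; this is exactly where the connectedness and bipartiteness of $G$ enter, and it constitutes the substance of \cite[Theorem 4.4]{ilten:09b}. Once the deformation parameter $t$ is interpreted as the position on $\PP^1$ of the transported summand $\M_s$, the identification of each fiber with $X(\M^{(t)})$ becomes a matter of bookkeeping.
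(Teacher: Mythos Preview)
Your proposal is essentially correct and matches the approach implicit in the paper. Note, however, that the paper does not actually supply a proof of this proposition: it is simply quoted as \cite[Theorem~4.4]{ilten:09b} and used as a black box. What you have written is a reasonable sketch of how the general deformation machinery of \cite{ilten:09b} specializes to the setting of degeneration diagrams, and in particular your identification of the graph $G$ with an admissible Minkowski decomposition of $\M_0^{(0)}$ is exactly the point the paper makes just before stating the proposition.
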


\begin{figure}[htbp]
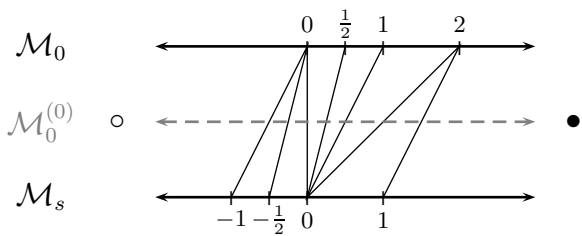

	\begin{center}	\exdegdi
	\end{center}
    \caption{A possible degeneration diagram}\label{fig:exdegdi}
\end{figure}
\begin{ex}[A blowup of $\PP^2$]\label{ex:degdi}
	Figure \ref{fig:exdegdi} presents a degeneration diagram for the multidivisor $\M$ of Figure \ref{fig:exmulti} with $s=\infty$. The resulting slice $\M_0^{(0)}$ can then be seen quite easily as the induced subdivision on the dashed line in between $\M_0$ and $\M_s$ scaled by a factor of two. The special fiber $X(\M^{(0)})$ of this degeneration is also a smooth toric surface.
\end{ex}

We will say that $X(\M)$ \emph{homogeneously degenerates} to $X(\M^{(0)})$, or equivalently, that $X(\M^{(0)})$ \emph{homogeneously deforms} to $X(\M)$. We will call $\pi^{-1}(s)=X(\M)$ and $\pi^{-1}(0)=X(\M^{(0)})$ respectively the general and special fibers of $\pi$, and denote them by $\cX_s$ and $\cX_0$. 
\begin{rem}
Note that if $\M$ has at most three non-trivial subdivisions $\M_P$, then all fibers $\pi^{-1}(t)$ are isomorphic for $t\neq 0$.
\end{rem}
The family $\pi$ induces a map of Picard groups $\bpim\colon\Pic(X(\M))\to\Pic(X(\M^{(0)}))$.  Indeed, any $\CC^*$-invariant divisor on $X(\M)$ can be canonically lifted to a $\CC^*$-invariant divisor on $\cX$ and then restricted to a $\CC^*$-invariant divisor on $X(\M^{(0)})$. This can be described explicitly.  For any $v\in\QQ$, let $\mu(v)$ be the smallest natural number such that $\mu(v)v\in\ZZ$. For any $\CC^*$-invariant divisor
\begin{equation*}
D=a_+D_++a_-D_-+\sum_{\substack{P\in\PP^1\\v\in\M_P(0)}} b_{P,v}D_{P,v}
\end{equation*}
on $X(\M)$, 
set
\begin{equation*}
\pim(D)=a_+D_++a_-D_-+\sum_{\substack{P\in\PP^1\setminus\{0,s\}\\v\in\M_P(0)}} b_{P,v}D_{P,v}+\sum_{\overline{v_0v_s}\in G}\mu(v_0+v_s)\left(\frac{b_{0,v_0}}{\mu(v_0)}+ \frac{ b_{s,v_s}}{\mu(v_s)}\right)  D_{0,v_0+v_s}.
\end{equation*}

\begin{prop}[{\cite{ilten:11e}}]
\label{prop:pim:preserves}
Any invariant divisor $D$  on $X(\M)$ can be canonically lifted to an invariant divisor $D^\tot$ on $\cX$ whose restriction to the special fiber is $\pim(D)$. The map $\pim$ is upper-semicontinuous with respect to cohomology, and preserves Euler characteristic, intersection numbers, canonical classes, and semiampleness. After factoring out by linear equivalence, $\pim$ induces an isomorphism of Picard groups $\bpim\colon\Pic(X(\M))\to\Pic(X(\M^{(0)}))$.
\end{prop}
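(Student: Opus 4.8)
The plan is to deduce every assertion from a single geometric statement: that $\pim(D)$ is the restriction to the central fibre $\cX_0$ of a canonical extension $D^\tot$ of $D$ across the whole family $\pi\colon\cX\to\base$. So the first — and most computational — task is to construct $D^\tot$ explicitly and identify its restrictions. Working with the chart-wise construction of $\cX$ attached to the degeneration diagram $(\M,G)$ (toric charts, with the deformation parameter $t\in\base$ as an extra coordinate), I would take each invariant prime divisor $D_+,\,D_-,\,D_{P,v}$ on the general fibre $\cX_s=X(\M)$ and define $D_+^\tot,\,D_-^\tot,\,D_{P,v}^\tot$ to be its Zariski closure in $\cX$, setting $D^\tot$ to be the corresponding combination. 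The key combinatorial computation is the scheme-theoretic intersection of these closures with $\cX_0=X(\M^{(0)})$: closures of divisors over $P\neq 0,s$ meet $\cX_0$ in the single divisor $D_{P,v}$, whereas the closures of the slices over $0$ and over $s$ both flow into the slice $\M_0^{(0)}$ dictated by the Minkowski decomposition encoded by $G$, so that $D_{0,v_0}^\tot$ and $D_{s,v_s}^\tot$ specialize to $D_{0,v_0+v_s}$ with multiplicities $\mu(v_0+v_s)/\mu(v_0)$ and $\mu(v_0+v_s)/\mu(v_s)$ along each edge $\overline{v_0v_s}$ of $G$. Reading this off yields exactly the displayed formula for $\pim(D)$ and simultaneously proves the lifting claim. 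I expect this to be the principal obstacle, since one must track non-lattice vertices and the normalizing factors $\mu$ carefully.

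With the lift established, the cohomological statements are formal consequences of flatness of $\pi$. The sheaf $\CO_\cX(D^\tot)$ (or a Cartier multiple, if $\cX$ is merely normal) is flat over the smooth curve $\base$, so Grothendieck's semicontinuity theorem makes $t\mapsto h^i(\cX_t,\CO(D^\tot)|_{\cX_t})$ upper-semicontinuous; specializing from $s$ to $0$ gives $h^i(X(\M^{(0)}),\CO(\pim(D)))\geq h^i(X(\M),\CO(D))$, which is the asserted upper-semicontinuity. Constancy of Euler characteristic in a flat family then gives $\chi(\pim(D))=\chi(D)$. For intersection numbers I would note that for two lifts $D^\tot,E^\tot$ the triple product $D^\tot\cdot E^\tot\cdot[\cX_t]$ is independent of $t$ (all fibres are linearly equivalent over the connected base $\base$), whence $(D\cdot E)_{X(\M)}=(\pim(D)\cdot\pim(E))_{X(\M^{(0)})}$. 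For the canonical class I would identify the canonical lift of $K_{X(\M)}$ with the relative dualizing sheaf $\omega_{\cX/\base}$; adjunction restricts the latter to $K_{X(\M^{(0)})}$ on the fibre $\cX_0$, giving $\pim(K_{X(\M)})=K_{X(\M^{(0)})}$ (equivalently, one checks directly that the combinatorial formula for the canonical invariant divisor of a smooth $\CC^*$-surface is respected by the displayed rule).

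Semiampleness is the delicate point. Here I would use that global sections of an invariant $\QQ$-divisor on a complexity-one $\CC^*$-surface are governed, weight by weight, by a piecewise-linear convexity datum over $\PP^1$ built from its coefficients and $\M$, exactly as a support function governs a toric divisor; semiampleness is equivalent to convexity of this datum. The construction of $\pim$ — adjoining the slices over $0$ and $s$ via the Minkowski decomposition of $G$ — preserves this datum up to the normalization $\mu$, and hence preserves convexity, and so semiampleness. Equivalently, a semiample $D$ on $\cX_s$ extends to a relatively semiample $D^\tot$ over $\base$, whose restriction to $\cX_0$ is then semiample. Verifying this compatibility is the second main obstacle.

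Finally, for the isomorphism $\bpim$ of Picard groups: the rule $\pim$ is additive, and it descends to linear equivalence because the canonical lift of a principal invariant divisor $\Div(\chi^u f)$ is principal on $\cX$, hence restricts to a principal divisor on $\cX_0$. It therefore induces a homomorphism $\bpim\colon\Pic(X(\M))\to\Pic(X(\M^{(0)}))$ of free abelian groups of the same rank $\rho$. Since $\bpim$ preserves the intersection pairing (shown above), if $A$ is its integer matrix and $G_s,G_0$ the Gram matrices of the two intersection forms, then $A^{\mathsf T}G_0A=G_s$; the intersection form on a smooth complete surface is unimodular by Poincaré duality and has the same sign of determinant $(-1)^{\rho-1}$ on both sides, so $\det(A)^2=1$ and $A\in\mathrm{GL}_\rho(\ZZ)$. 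Thus $\bpim$ is an isomorphism.
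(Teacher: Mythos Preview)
The paper does not give a proof of this proposition; it is cited from \cite{ilten:11e} and used as a black box throughout. There is therefore nothing in the present paper to compare your sketch against.

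That said, your outline is a sound reconstruction of how such a statement is established. The lift-by-Zariski-closure construction followed by the standard flatness/semicontinuity package is the natural route, and your unimodularity argument for the isomorphism $\bpim$ is correct and pleasant: preservation of the intersection pairing together with Poincar\'e duality on both smooth rational fibres forces $\det(A)^2=1$. Two places deserve a word more. First, you assert that both Picard groups are free of the same rank $\rho$; this is true but not automatic---it comes from counting invariant prime divisors on each side, where connectedness of the graph $G$ is exactly what makes $|\M_0^{(0)}(0)|=|\M_0(0)|+|\M_s(0)|-1$ and hence the ranks match. Second, the semiampleness argument is, as you acknowledge, the sketchiest part; in the polyhedral-divisor language it amounts to checking that the piecewise-linear support function attached to $D$ stays convex under the Minkowski-sum operation encoded by $G$, and this is where the actual work in \cite{ilten:11e} lies.
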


\begin{rem}
	Let $D$ be an invariant divisor on $X(\M)$ as above.
	For $t\neq 0$, the restriction of $D^\tot$ to $\cX_t=\pi^{-1}(t)$ is just 
	\begin{equation*}
		D=a_+D_++a_-D_-+\sum_{v\in\M_s(0)} b_{s,v}D_{t,v}+\sum_{\substack{P\in\PP^1\setminus\{s\}\\v\in\M_P(0)}} b_{P,v}D_{P,v},
\end{equation*}
that is, divisors of the form $D_{s,v}$ are replaced with those of $D_{t,v}$.
\end{rem}
\begin{ex}
\label{ex:hirzedefdiv}
We look at an explicit example of the map $\bpim$, where $\pi\colon \cX \rightarrow \base$ is a deformation whose general fiber $\cX_s$ is a Hirzebruch surface $\F_r=\Proj_{\PP^1}(\CO\oplus\CO(r))$. As a toric variety, $\F_r$ corresponds to the complete fan $\Sigma_r$ with rays through $(1,0)$, $(0,1)$, $(-1,r)$, and $(0,-1)$. If $X(\M)=\F_r$ and $\M$ admits a non-trivial degeneration diagram, we can assume that $\M_0$ has vertices $-\frac{1}{r+\alpha}$, $0$ and that $\M_s$ has vertices $0,\frac{1}{\alpha}$ for some $\alpha>0$; this follows by a straightforward calculation from the description of $\Sigma_r$. We call this multidivisor $\M(r,\alpha)$. Note that with the exception of the case $r=0, \alpha= 1$, there is only one possible graph $G$ making $(\M(r,\alpha),G)$ into a degeneration diagram. Indeed, this is the bipartite graph where both $0$ vertices have degree two and the other two vertices have degree one. For the case $r=0, \alpha=\pm 1$, there is also the possibility of the bipartite graph $\tilde{G}$ where both $0$ vertices have degree one and the other two vertices, in this case lattice points, have degree two. In any case, the degeneration diagram $(\M(r,\alpha),G)$ (or $(\M(r,\alpha),\tilde{G})$) has corresponding special fiber $\F_{r+2\alpha}$. The difference between $G$ and $\tilde{G}$ corresponds to a flip on the total space of the deformation.

	For any Hirzebruch surface $\F_r$ with $r>0$, let $P$ be the divisor class of the fiber  of the ruling on $\F_r$, and let $Q$ be the unique class with $Q^2=r$ and $P.Q=1$. Now considering the isomorphism $X(\M(r,\alpha))\cong \F_r$, $P$ and $Q$ can respectively be represented by $D_{s,0}$ and $D_{s,{1}/{\alpha}}$. Consider now the deformation $\pi$ from $\F_{r+2\alpha}$ to $\F_r$ determined by the degeneration diagram $(\M(r,\alpha),G)$ and assume $r>0$. Then $\bpim(\CO(P))$ can be represented by $D_{0,{1}/\alpha}$ and $\bpim(\CO(Q))$ can be represented by $(r+\alpha)D_{0,-{1}/({r+\alpha})}+D_{0,0}$. One easily checks that
\begin{align}
	\bpim(\CO(P))&=\CO(P)\label{eqn:P}\\
	\bpim(\CO(Q))&=\CO(Q-\alpha P)\label{eqn:Q}
\end{align}
	where by abuse of notation, the $\CO(P)$ and $\CO(Q)$ on the right hand side of the equalities represent classes in $\Pic(\F_{r+2\alpha})$.

	The case of $r=0$ requires slightly more care, since there are two possible rulings on $\F_0$. Fix an isomorphism $\F_0\cong X(\M(0,\alpha))$ and consider the ruling of $\F_0$ given by the quotient map of the $\CC^*$-action on $X(\M(0,\alpha))$; note that this doesn't depend on $\alpha$. Then $P$ and $Q$ can be represented exactly as above. For $\pi$ corresponding to the degeneration diagram $(\M(0,\alpha),G)$, we once again have equations \eqref{eqn:P} and \eqref{eqn:Q}. On the other hand, for $\pi$ corresponding to the degeneration diagram $(\M(0,1),\tilde{G})$, we have $\bpim(\CO(P))=\CO(Q-P)$ and $\bpim(\CO(Q))=\CO(P)$. Thus, if in this case we instead consider the other possible ruling of $\F_0$ (and thus swap $P$ and $Q$), we once again have equations \eqref{eqn:P} and \eqref{eqn:Q}.
\end{ex}

\section{Properties of Homogeneous Deformations}\label{sec:def}
In this section, we will analyze the flat families coming from degeneration diagrams.  First, we will consider certain deformations where both the special and general fibers are toric. Then, we will consider the behaviour of homogeneous deformations with respect to blowing up and blowing down invariant curves. Then, we will show that these deformations suffice to connect all smooth rational $\CC^*$-surfaces of fixed Picard number larger than two.
\subsection{Deformations with Toric Fibers}
Consider some homogeneous deformation $\pi$ coming from a degeneration diagram. In general, even if $\cX_0$ is toric, $\cX_s$ may not be.  However, there are some homogeneous deformations for which both the special and general fiber are toric.

\begin{theorem}\label{thm:toricdef}
Consider a smooth complete toric surface $\cX_0=\tv(b_0,\ldots,b_l)$ such that $b_0<0$ and $l>2$. Then there exists a homogeneous deformation     of $\cX_0$ with toric general fiber
                                $$\cX_s=\tv(b_0+\gamma+2r,b_{\alpha-1},\ldots,b_{1},b_\alpha -\gamma-2r, b_{\alpha+1},\ldots,b_l),$$
                        where $\alpha$ and $\gamma$ are as in Lemma \ref{lemma:gamma} and $0\leq r\leq-b_0$.
\end{theorem}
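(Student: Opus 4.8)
The plan is to realize $\cX_s$ as the toric general fiber of an explicit degeneration diagram having $\cX_0$ as its special fiber, and then to verify the self-intersection numbers and the smoothness combinatorially.

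First I would choose coordinates on $N$ with $\rho_0=(0,-1)$ and $\rho_\alpha=(0,1)$, which is possible exactly because $\rho_\alpha=-\rho_0$. Viewing $\cX_0$ as a rational $\CC^*$-surface via the action with cocharacter $(1,0)$ (so that the quotient map is $(a,b)\mapsto b$), I would present it as a special fiber $X(\M^{(0)})$: up to a shear fixing $\rho_0$ and $\rho_\alpha$, there is a single nontrivial slice, into which the upper chain $\rho_1,\dots,\rho_{\alpha-1}$ and the lower chain $\rho_{\alpha+1},\dots,\rho_l$ contribute the vertices on the two sides of the central vertex coming from $\rho_\alpha$. The geometric idea of the deformation is then simply to reflect the upper chain across the line $\RR\rho_0$ (which interchanges the two base points of the quotient $\PP^1$) while leaving the lower chain and the two central rays fixed; the residual shear freedom in the chosen presentation is precisely the parameter $r$.

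Concretely I would write down the degeneration diagram $(\M,G)$ whose slice over one base point collects the central vertex together with the lower chain, and whose slice over the other collects the central vertex together with the reflected upper chain, with $G$ the bipartite double star joining each central vertex to the opposite slice; the standard construction of a flat family from a degeneration diagram then yields a homogeneous deformation of $\cX_0=X(\M^{(0)})$ with toric general fiber $\cX_s=X(\M)$. Since reflection is a lattice isometry, it reverses the counterclockwise order of the upper chain while preserving each relation $b_i\rho_i=\rho_{i-1}+\rho_{i+1}$, so the entries $b_1,\dots,b_{\alpha-1}$ reappear reversed as $b_{\alpha-1},\dots,b_1$ and the lower chain keeps $b_{\alpha+1},\dots,b_l$. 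Evaluating $b_i\rho_i=\rho_{i-1}+\rho_{i+1}$ at the two seams adjacent to $\rho_0$ and $\rho_\alpha$ for the new neighbours, and rewriting the resulting heights by means of the identity $\langle\rho_{\alpha-1},R\rangle-\langle\rho_1,R\rangle=\gamma$ from Lemma~\ref{lemma:gamma}, produces the two modified self-intersections $b_0\mapsto b_0+\gamma+2r$ and $b_\alpha\mapsto b_\alpha-\gamma-2r$, while the shear contributes the linear dependence on $r$.

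Finally I would check that $\M$ is a smooth multidivisor, so that $\cX_s=X(\M)$ is a smooth (complete) toric surface by Proposition~\ref{prop:smooth}. Smoothness in the middle is inherited from $\cX_0$, because the interior cones of the two chains are only reflected and sheared; the substance lies in smoothness on the left and right, that is, in the two source/sink cones $\sigma_\pm$ of $\cX_s$. I expect this to be the main obstacle: one must verify that both of these cones are smooth for every $r$ with $0\le r\le -b_0$, and fail to be so outside that range. This is exactly where the inequalities $\gamma\ge 0$ and $b_0+b_\alpha-\gamma\ge 0$ of Lemma~\ref{lemma:gamma} are needed, as they govern the two extreme cases $r=0$ and $r=-b_0$ and hence delimit the admissible range of $r$.
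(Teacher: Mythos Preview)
Your approach is essentially the paper's: normalize so that $\rho_0=(0,-1)$, $\rho_\alpha=(0,1)$, and $\rho_1=(1,r)$; realize $\cX_0$ via a multidivisor with a single nontrivial slice; split that slice at the central vertex $0$ into $\M_0$ and $\M_s$ with the double-star graph $G$; and read off the fan $\Sigma'$ of the toric general fiber by placing $\M_0$ at height $+1$ and $\M_s$ at height $-1$. The self-intersection computation via the identity in Lemma~\ref{lemma:gamma} is exactly as you outline.

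Two corrections. First, the reflection taking the upper chain to its new position is across the horizontal axis (this is what ``interchanges the two base points'' does, since the quotient $\PP^1$ is the vertical direction), not across $\RR\rho_0$; your parenthetical is right but the named axis is wrong. Second, and more substantively, you have mislocated the origin of the constraint $0\le r\le -b_0$. It is not a smoothness condition on $\sigma_\pm$ but precisely the condition that the chosen shear places every ray other than $\rho_0$ in the closed upper half-plane, so that the single-nontrivial-slice presentation of $\cX_0$ exists in the first place and the split into $\{v\le 0\}$ and $\{v\ge 0\}$ makes sense. Once that is arranged, smoothness of $\Sigma'$ is nearly automatic: the interior cones are reflections of cones of $\Sigma$, and the two seam cones $\langle\rho_0',\rho_1'\rangle$ and $\langle\rho_{\alpha-1}',\rho_\alpha'\rangle$ are smooth because $\langle\rho_{\alpha-1},\rho_\alpha\rangle$ and $\langle\rho_0,\rho_1\rangle$ were. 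In particular, the inequalities $\gamma\ge 0$ and $b_0+b_\alpha-\gamma\ge 0$ of Lemma~\ref{lemma:gamma} play no role in the argument; only the identity $\langle\rho_{\alpha-1},R\rangle-\langle\rho_1,R\rangle=\gamma$ is used.
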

\begin{figure}[htbp]
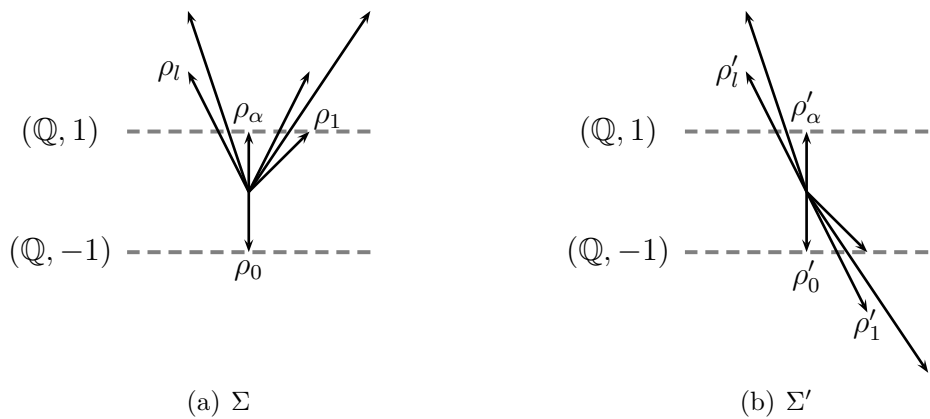

    \centering
    \subfigure[$\Sigma$]{\prooffan}
    \subfigure[$\Sigma'$]{\prooffanb}
    \caption{Possible fans from the proof of Theorem \ref{thm:toricdef}}\label{fig:prooffans}
\end{figure}
\begin{proof}
        Let $\cX_0=\tv(\Sigma)$ for some fan $\Sigma\subset \ZZ^2\otimes \QQ$ with rays $\rho_i$ corresponding to the numbers $b_i$, see for example Figure \ref{fig:prooffans}(a). After a lattice transformation, we may assume that $\rho_0=(0,-1)$ and $\rho_1=(1,r)$.
We construct a multidivisor $\M^{(0)}$ by taking $\M_0^{(0)}$ to be the subdivision induced by $\Sigma$ on the line $(\QQ,1)$, and by taking $\M_+^{(0)},\M_-^{(0)}$ to be $\nocontract$ if and only if $r=0$ or $r=-b_0$, respectively.\footnote{The condition $r=-b_0$ is equivalent to $\rho_l=(-1,0)$.} Note that the requirement $0\leq r \leq -b_0$ is exactly such that $\Sigma$ subdivides the line $(\QQ,-1)$ only with the ray $\rho_0$. Thus, we have $\cX_0=X(\M^{(0)})$ from the discussion in Section \ref{sec:multidiv}.

        We now construct a degeneration diagram $(\M,G)$ giving special fiber $\cX_0$. Indeed, let the vertices of $\M_0$ consist of those vertices  $v\in\M_0^{(0)}$ with $v\leq 0$. Likewise, let the vertices of $\M_s$ consist of those vertices $v\in\M_0^{(0)}$ with $v\geq 0$. Furthermore, take $\M_+=\M_+^{(0)}$, $\M_-=\M_-^{(0)}$. We then define $G$ to be the graph having edges $\overline{v_0v_s}$ with $v_0\in\M_0,v_s\in\M_s$ and either $v_0=0$ or $v_s=0$. One easily confirms that $(\M,G)$ is indeed a degeneration diagram with $\M_0^{(0)}$ being the multidivisor for the corresponding special fiber. 

	Now, we see that the general fiber $\cX_s=X(\M)$ is toric, since $\M_0$ and $\M_s$ are the only two nontrivial subdivisions. In fact, by embedding $\M_0$ and $\M_s$ in height one and minus one respectively, and possibly adding horizontal rays coming from $\M_+$ and $\M_-$, we recover a fan $\Sigma'$ with $\cX_s=X(\M)=\tv(\Sigma')$, see for example Figure \ref{fig:prooffans}(b). $\Sigma'$ then has rays $\rho_0',\ldots,\rho_l'$ ordered cyclically with $\rho_i'=\rho_i$ for $\alpha\leq i \leq l$ or $i=0$ and $\rho_i'$ a vertical reflection of $\rho_{\alpha-i}$ for $0<i<\alpha$.\footnote{Note that if $r=0$, then $\rho'_{\alpha-1}=\rho_1=(1,0)$, these rays corresponding to $\M_+=\M_+^{(0)}=\nocontract$.} Let $-b_i'$ be the self-intersection number of the divisor corresponding to the $\rho_i'$; then $\cX_s$ is represented by the chain $(b_0',\ldots,b_l')$. Now, it is immediate from this description that $b_i'=b_{\alpha-i}$ for $1\leq i < \alpha$ and that $b_i'=b_i$ for $\alpha<i\leq l$. Furthermore, if we take $R=[0,1]\in\Hom(\ZZ^2,\ZZ)$, we have 
                \begin{equation}\label{eqn:balpha}
                b_\alpha'=\langle \rho_{\alpha-1}',R\rangle+\langle \rho_{\alpha+1}',R\rangle=-\langle \rho_{1},R\rangle+\langle \rho_{\alpha+1},R\rangle.
        \end{equation}          
We also have 
$\langle \rho_{\alpha-1},R\rangle=r+\gamma$ by Lemma \ref{lemma:gamma}.
Since $b_\alpha=\langle \rho_{\alpha-1},R\rangle+\langle \rho_{\alpha+1},R\rangle$, we can then rewrite equation \eqref{eqn:balpha} as
\begin{equation*}
b_\alpha'=
-r+b_\alpha-\langle \rho_{\alpha-1},R\rangle=b_\alpha-\gamma-2r.
\end{equation*}
The sum of all the intersection numbers must remain constant, so we also have $b_0'=b_0+\gamma+2r$, completing the proof.
\end{proof}

\subsection{Blowing Up and Down Deformations}
We shall now see how homogeneous deformations are compatible with blowing up and blowing down.  We first need the following lemma:

\begin{lemma}
\label{lem:exdiv:valone}
        Let $(\M,G)$ be a degeneration diagram. For any edge $\overline{v_0v_s}$ of $G$ such that the divisor $D_{0,v_0+v_s}$ on $X(\M^{(0)})$ has self-intersection $-1$, one of the vertices $v_0,v_s$ must have valency one. 
\end{lemma}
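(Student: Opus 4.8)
The plan is to rephrase the self-intersection hypothesis in terms of primitive ray generators and then to split the argument according to whether the vertex $v\coloneqq v_0+v_s$ lies in the interior of the subdivision $\M_0^{(0)}$ or at one of its two ends. First I would record a bookkeeping fact: by the no-crossing condition~(2) in the definition of a degeneration diagram, the assignment $\overline{v_0v_s}\mapsto v_0+v_s$ is an order-preserving bijection between the edges of $G$ and the vertices of $\M_0^{(0)}$. Indeed, two edges with the same sum would have to interleave, hence cross, so distinct edges give distinct vertices; in particular the edge producing a given vertex is unique and $v_0,v_s$ are recovered from $v$. For a vertex $w$ of $\M_0^{(0)}$ I write $\rho_w=(\mu(w)w,\mu(w))$ for the primitive generator of the ray through $(w,1)$, so that $D_{0,w}^2$ is read off from the toric model of the fibre over $0$, exactly as recalled for smooth complete toric surfaces above.

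For the interior case, suppose $v$ has both neighbours $w^-<v<w^+$ among the vertices of $\M_0^{(0)}$. Since $\M^{(0)}$ is smooth, Proposition~\ref{prop:smooth} makes the fan of the fibre over $0$ smooth near $\rho_v$, so the wall-crossing relation reads $b\,\rho_v=\rho_{w^-}+\rho_{w^+}$ with $-b=D_{0,v}^2$. The hypothesis $D_{0,v}^2=-1$ forces $b=1$, and comparing second coordinates gives $\mu(v)=\mu(w^-)+\mu(w^+)\geq 2$. Hence $v=v_0+v_s\notin\ZZ$, so at least one of $v_0,v_s$ is not a lattice point. By condition~(3) of the degeneration diagram every vertex of valency strictly greater than one is a lattice point, while connectedness of $G$ forces valency at least one; thus the non-lattice vertex among $v_0,v_s$ has valency exactly one, as required.

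For the boundary case, say $v$ is the leftmost vertex of $\M_0^{(0)}$ (the rightmost being symmetric). Invoking condition~(2) again, the two smallest vertices $m_0\in\M_0$ and $m_s\in\M_s$ must be joined by an edge, since otherwise the smallest edges leaving $m_0$ and $m_s$ would cross; this edge realises the minimal sum, so $v=m_0+m_s$. Here the conclusion holds without using the self-intersection hypothesis at all: if both $m_0$ and $m_s$ had valency at least two, then $m_0$ would carry a second edge to some $x\in\M_s$ with $x>m_s$ and $m_s$ a second edge to some $y\in\M_0$ with $y>m_0$; the segments $\overline{m_0x}$ and $\overline{y\,m_s}$ share no endpoint yet interleave, hence cross, contradicting the planar realisability in condition~(2). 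Therefore one of $m_0,m_s$ has valency one.

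The step I expect to be the main obstacle is not the combinatorics but the justification that legitimises the self-intersection computation in the interior case: one must confirm that $D_{0,v}$ has a neighbourhood modelled on the toric chart $X_0^{(0)}$ built from the three consecutive rays $\rho_{w^-},\rho_v,\rho_{w^+}$, so that the wall relation genuinely computes $D_{0,v}^2$ on the possibly non-toric surface $X(\M^{(0)})$, together with the verification that the end vertices of $\M_0^{(0)}$ are precisely $m_0+m_s$ and the analogous rightmost sum. Once this local toric model is in place, the two cases above dispatch the lemma.
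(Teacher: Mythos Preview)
Your proof is correct and follows essentially the same route as the paper's: the same interior/boundary split, the same use of the wall relation $\mu(v)=\mu(w^-)+\mu(w^+)$ to force $v\notin\ZZ$ in the interior case, and the same appeal to non-crossing for the extremal edge. The paper is terser---it simply asserts that for an extremal edge ``it is then clear that either $v_0$ or $v_s$ must have valency one''---whereas you spell out the crossing argument explicitly; your concern about the local toric model for the self-intersection computation is legitimate but is addressed by the construction of $X(\M^{(0)})$ via the charts $X_P$ earlier in the paper.
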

\begin{proof}
        If $v_0+v_s$ lies to the left or to the right of all other vertices of $\M_0^{(0)}$, then the edge $\overline{v_0v_s}$ lies to the left or right of all other edges of $G$ as well; it is then clear that either $v_0$ or $v_s$ must have valency one. If on the other hand $v_0+v_s$ has left and right neighboring vertices $v',v''\in\M_0^{(0)}$, then $\mu(v_0+v_s)=\mu(v')+\mu(v'')>1$, where the equality follows from the toric formula for self-intersection numbers.
If neither $v_0$ nor $v_s$ has valency one, they must both be lattice points, in which case $\mu(v_0+v_s)=1$, a contradiction.
\end{proof}

Using this lemma, it is clear how to \emph{blow down} any homogeneous deformation. Indeed, let $\pi$ correspond to the degeneration diagram $(\M,G)$, and let $\phi\colon\cX_0\to \cX_0'$ be the contraction of an invariant minus one curve.

Suppose first of all that this curve is of the form $D_{P,v}$ for $P\neq 0$
or of the form $D_+$ or $\D_-$. Then we get a new multidivisor $\M'$ by respectively removing the vertex $v$ from the subdivision $\M_P$ or by setting $\M_+$ or $M_-$ to $\contract$. Setting $G'=G$, we then have that $(\M',G)$ is a degeneration diagram with $X({\M'}^{(0)})=\cX_0'$ and with $\cX_s'=X(\M')$ a blowdown of $\cX_s$.

On the other hand, suppose that $\phi$ blows down a curve of the form $D_{0,v}$ in $\cX_0$. Then $v$ corresponds to an edge $\overline{v_0v_s}$ of $G$ and by the above lemma, either $v_0$ or $v_s$ must have valency one; assume without loss of generality that this is $v_0$. We then get a new multidivisor $\M'$ by removing the vertex $v_0$ from the subdivision $\M_0$. Furthermore, we have a graph $G'$ on $\M'$ attained from $G$ by removing the edge $\overline{v_0v_s}$. Due to the fact that $v_0$ had valency one in $G$, one easily checks that $(\M',G')$ is a degeneration diagram.
As in the other case, we have  $X({\M'}^{(0)})=\cX_0'$ and $\cX_s'=X(\M')$ a blowdown of $\cX_s$. In this manner we define the \emph{blowdown of $(\M,G)$ by $\phi$} to be $(\M',G')$. We call the flat family corresponding to $(\M',G')$ the blowdown of $\pi$ by $\phi$.

It is also possible to lift a homogeneous deformation $\pi\colon\cX\to \base$ by an invariant \emph{blowup} $\phi$ of either the special fiber $\cX_0$ or the general fiber $\cX_s$. Indeed, let $(\M,G)$ be the corresponding degeneration diagram.

The first possible type of blowup of $\cX_0$ or $\cX_s$ is by blowing up in a fixpoint of the $\CC^*$-action which is contained in the closure of multiple orbits, that is, by 
changing $\contract$ to $\nocontract$ for either $\M_+$ or $\M_-$. If we define $\M'$ to be equal to $\M$ with the relevant modification of $\M_+$ or $\M_-$, we get a degeneration diagram $(\M',G')$ with either $X(\M'^{(0)})$ or $X(\M')$ the desired blowup of $\cX_0$ or respectively $\cX_s$.
Suppose instead that the blowup of $X_0$ or $X_s$ corresponds to inserting a vertex $v$ in the subdivision $\M_P^{(0)}=\M_P$ for $P\neq 0,s$. Then if we define $\M'$ to come from $\M$ by adding the vertex $v$ to $\M_P$ and setting $G=G'$, we get a degeneration diagram $(\M',G')$ with the same property as in the previous case.

Suppose now that a blowup of $\cX_0$ corresponds to inserting a vertex $v$ in the subdivision $\M_0^{(0)}$. This corresponds to the insertion of a vertex $\tilde{v}$ in either $\M_0$ or $\M_s$, which in turn corresponds to a blowup of $\cX_s$.\footnote{Note that the placement of the vertex in either $\M_0$ or $\M_s$ is uniquely determined if $v$ isn't an extremal vertex of $\M_0^{(0)}$.}  So assume that we have a blowup of $\cX_s$ of this form. Then we can define a multidivisor $\M'$ from $\M$ similar to the previous cases. Likewise, we can define a graph $G'$ on the vertices of $\M_0',\M_s'$ by adding an edge between $\tilde{v}$ and the unique vertex connected to all neighboring vertices of $\tilde{v}$.
This defines a degeneration diagram $(\M',G')$ with the same property as above. In all such cases, we call $(\M',G')$ a \emph{blowup of $(\M,G)$ by $\phi$}.

We can sum up the above discussion by the following proposition:
\begin{prop}\label{prop:blowupblowdown}
Let $(\M,G)$ be a degeneration diagram with corresponding special fiber $\cX_0$ and general fiber $\cX_s$.
        \begin{enumerate}
                \item \label{prop:blowupblowdown1} If $\phi\colon\cX_0\to \cX_0'$ is a blowdown of an invariant curve, there is a unique degeneration diagram $(\M',G')$ called the blowdown of $(\M,G)$ by $\phi$ such that $X(\M'^{(0)})=\cX_0'$ and $X(\M')$ is an invariant blowdown of $\cX_s$.
                \item if $\phi\colon\cX_0'\to \cX_0$ is an invariant blowup, there is a degeneration diagram $(\M',G')$ called a blowup of $(\M,G)$ by $\phi$ such that  $X(\M'^{(0)})=\cX_0'$ and $X(\M')$ is an invariant blowup of $\cX_s$.
                \item \label{prop:blowupblowdown3} if $\phi\colon\cX_s'\to \cX_s$ is an invariant blowup, there is a unique degeneration diagram $(\M',G')$ called the blowup of $(\M,G)$ by $\phi$ such that  $X(\M')=\cX_s'$ and $X(\M'^{(0)})$ is an invariant blowup of $\cX_0$.
        \end{enumerate}
\end{prop}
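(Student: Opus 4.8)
The constructions of $(\M',G')$ in each of the possible cases have already been carried out explicitly in the discussion preceding the statement, so the task is not to invent them but to confirm that they do what is claimed. Concretely, for each case the plan is to verify three things: that $(\M',G')$ satisfies the three defining conditions of a degeneration diagram and that $\M'$ is again smooth (so that $X(\M')$ and $X({\M'}^{(0)})$ are genuine smooth rational $\CC^*$-surfaces); that these two surfaces are the asserted blowups or blowdowns of $\cX_s$ and $\cX_0$; and, in parts (\ref{prop:blowupblowdown1}) and (\ref{prop:blowupblowdown3}), that the diagram produced is unique.

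The combinatorial axioms are the easiest part. Bipartiteness and the existence of a planar straight-line realization pass from $(\M,G)$ to $(\M',G')$ automatically, since in every case $G'$ is obtained from $G$ by deleting or inserting a single vertex together with its incident edge(s), leaving the partition into $\M_0'$ and $\M_s'$ and the segment realization intact. The only condition needing genuine attention is that every vertex of valency greater than one be a lattice point, and this is precisely where Lemma \ref{lem:exdiv:valone} is used: when $\phi$ contracts a curve $D_{0,v}$ of the special fiber, the lemma forces one endpoint $v_0$ of the corresponding edge $\overline{v_0v_s}$ to have valency one, so that deleting $v_0$ and $\overline{v_0v_s}$ keeps $G'$ connected and raises no valency. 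Smoothness of $\M'$ is then checked from the local smoothness conditions of Proposition \ref{prop:smooth}: deleting the vertex of a $(-1)$-curve merges two smooth cones into a smooth one, inserting a vertex refines a smooth cone into smooth cones, and toggling $\M_\pm$ between $\contract$ and $\nocontract$ is the combinatorial shadow of blowing up or down a smooth fixed point; all of these preserve smoothness in the middle and on both sides.

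The identification of the fibers is the heart of the matter. Deleting (inserting) a vertex from a slice removes (adds) exactly one invariant prime divisor from the associated fiber, so $X(\M')$ and $X({\M'}^{(0)})$ differ from $\cX_s$ and $\cX_0$ by a single invariant divisor; it remains to see that this divisor is in each case a $(-1)$-curve. For the slices with $P\neq 0,s$, for $\M_\pm$, and on the special fiber side this is immediate from the hypothesis on $\phi$. The delicate case is the divisor $D_{0,v_0}$ on the general fiber $\cX_s$: here one uses that the other endpoint $v_s$ of the edge, having valency at least two (as $G$ is connected with more than one edge), is a lattice point by the third defining condition, whence $\mu(v_0+v_s)=\mu(v_0)$ and the explicit formula for $\pim$ gives $\bpim(D_{0,v_0})=D_{0,v}$. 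Since $\bpim$ preserves intersection numbers by Proposition \ref{prop:pim:preserves} and $D_{0,v}$ is a $(-1)$-curve on $\cX_0$, the divisor $D_{0,v_0}$ is a $(-1)$-curve on $\cX_s$, so that $X(\M')$ is an invariant blowdown of $\cX_s$; the blowup cases are handled symmetrically, the new vertex producing a $(-1)$-curve via star subdivision. This matching of the combinatorics of $G$ against intersection theory on the general fiber is the step I expect to require the most care, as it is the only place where Lemma \ref{lem:exdiv:valone}, the lattice-point axiom, and Proposition \ref{prop:pim:preserves} must all be combined.

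Uniqueness and its failure are then explained by the rigidity of each construction. In part (\ref{prop:blowupblowdown1}) the curve contracted by $\phi$ is prescribed, and the recipe associates to it a unique vertex or a unique modification of $\M_\pm$; when the contracted curve is $D_{0,v}$ the endpoint of valency one is the one deleted, and in the degenerate situation where both endpoints have valency one the two deletions yield the same $(\M',G')$. In part (\ref{prop:blowupblowdown3}) a blowup of $\cX_s$ prescribes the new invariant divisor, hence the new vertex $\tilde v$ together with the slice $\M_0$ or $\M_s$ into which it must be inserted, so the diagram is again determined. In part (2), by contrast, inserting a vertex $v$ into $\M_0^{(0)}$ corresponds to inserting $\tilde v$ into either $\M_0$ or $\M_s$, and when $v$ is an extremal vertex of $\M_0^{(0)}$ both choices are admissible and produce genuinely different diagrams; this accounts for the weaker formulation ``a blowup'' in that case.
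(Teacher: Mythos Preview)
Your proposal is correct and follows the paper's approach: the paper treats this proposition purely as a summary of the explicit constructions laid out in the discussion immediately preceding it (introduced by ``We can sum up the above discussion by the following proposition''), and you build directly on those same constructions. In fact you supply more detail than the paper does---for instance, your use of Proposition~\ref{prop:pim:preserves} to verify that $D_{0,v_0}$ is a $(-1)$-curve on $\cX_s$, and your explicit discussion of why uniqueness holds in parts (\ref{prop:blowupblowdown1}) and (\ref{prop:blowupblowdown3}) but can fail in part (2), are points the paper leaves implicit.
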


\begin{ex}
	In Figure \ref{fig:blowupblowdown}, we picture some possible blowups and blowdowns of the degeneration diagram from Figure \ref{fig:exdegdi}. In (a), we show the resulting diagram for the blowdown of $\cX_0$ in the divisor corresponding to $1/2\in\M_0^{(0)}$. In (b) and (c) we show the two possible ways of blowing up the degeneration coming from a blowup of $\cX_0$ at the point $2\in\M_0^{(0)}$.
\end{ex}

\begin{figure}[htbp]
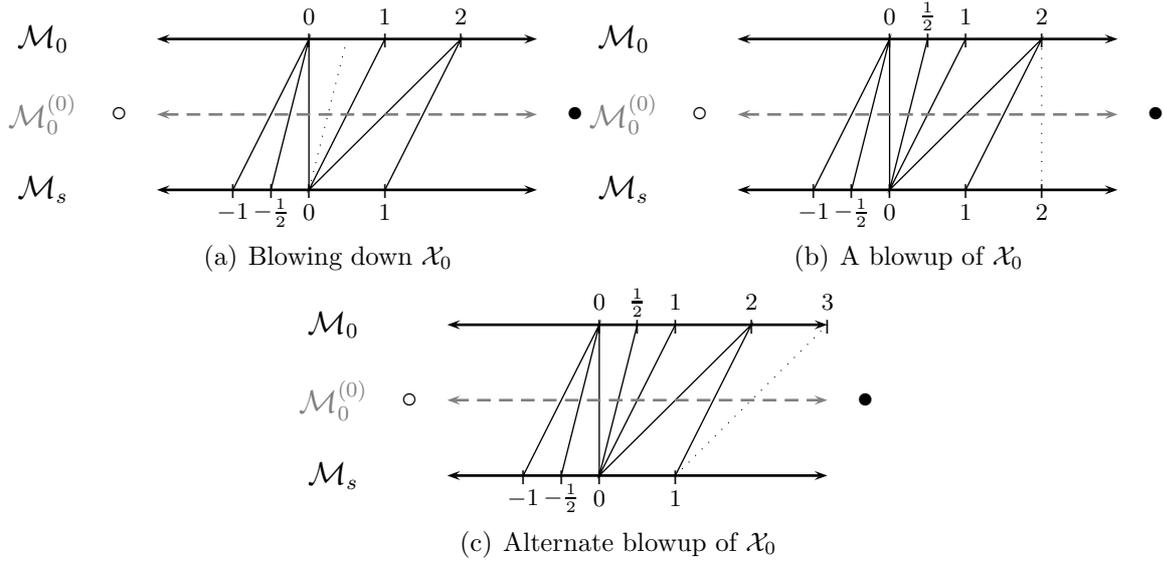

\begin{center}
	\subfigure[Blowing down $\cX_0$]{\exdegdibd}
	\subfigure[A blowup of $\cX_0$] {\exdegdibua}
	\subfigure[Alternate blowup of $\cX_0$]{\exdegdibub}
\end{center}
	\caption{Blowup and blowdown of degenerations diagrams.}\label{fig:blowupblowdown}
\end{figure}

The operations of blowing up and blowing down degeneration diagrams behaves nicely with the induced maps on invariant divisors. More specifically, in the above setting, let $\phi_0\colon\cX_0\to \cX_0'$ be an invariant blowdown of a minus one curve with $E_0$ the corresponding exceptional divisor. Let $\pi'$ be the blowdown of $\pi$ by $\phi$, with $\cX_s'$ the general fiber of $\pi'$. From the description of the blowdown of a degeneration diagram, one easily confirms that we have an invariant blowdown $\phi_s\colon\cX_s\to \cX_s'$; let $E_s$ be the corresponding exceptional divisor.

\begin{prop}
\label{prop:blowuppicommutes}
In the above situation, $\pim(E_s)=E_0$ and the following diagram commutes:
\[
\xymatrix{
\TCDiv(\cX_0) & \TCDiv(\cX_s) \ar[l]_{\pim{}}\\
\TCDiv(\cX_0') \ar[u]^{\phi_0^{*}}  & \TCDiv(\cX_s') \ar[u]_{\phi_s^*} \ar[l]^{(\pi')^\circ}
}
\]
Here, $\TCDiv$ denotes the group of $\CC^*$-invariant divisors.
\end{prop}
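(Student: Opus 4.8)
The plan is to verify both assertions by computing with the explicit descriptions of all four maps on torus-invariant prime divisors, using that each of $\pim$, $(\pi')^\circ$, $\phi_0^*$ and $\phi_s^*$ is $\ZZ$-linear on $\TCDiv$. Throughout I would use the explicit formula for $\pim$ recorded before Proposition \ref{prop:pim:preserves}, the parallel formula for $(\pi')^\circ$ built from the blowdown diagram $(\M',G')$, and the fact that $\phi_0,\phi_s$ are equivariant contractions of a single $(-1)$-curve, so that their pullbacks on invariant divisors are governed by the associated piecewise-linear support functions.

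First I would prove $\pim(E_s)=E_0$ by a short case analysis following the construction of the blowdown $(\M',G')$. If the contracted curve is $D_{P,v}$ with $P\neq 0,s$, or one of $D_\pm$, then $E_s$ and $E_0$ are literally the same invariant divisor and $\pim$ acts as the identity on it, so there is nothing to do. The remaining case is $E_0=D_{0,v_0+v_s}$ for an edge $\overline{v_0v_s}$ of $G$ with, say, $v_0$ of valency one, so that $E_s=D_{0,v_0}$. Evaluating the $\pim$-formula on $D_{0,v_0}$, every summand vanishes except those indexed by edges of $G$ incident to $v_0$; since $v_0$ has valency one, the only surviving term is $\mu(v_0+v_s)\,\mu(v_0)^{-1}D_{0,v_0+v_s}$. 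The crux is then the identity $\mu(v_0+v_s)=\mu(v_0)$. This holds because $v_s$ is a lattice point: as $G$ is connected and, for a genuine $(-1)$-curve, must have more than one edge (otherwise $\M_0^{(0)}$ would be a single vertex and $D_{0,v_0+v_s}$ a fiber), the vertex $v_s$ cannot also have valency one, whence its valency is at least two and it is integral by the third axiom of a degeneration diagram. Adding an integer leaves the denominator unchanged, so $\mu(v_0+v_s)=\mu(v_0)$ and $\pim(E_s)=E_0$.

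For the commutativity I would check $\pim\circ\phi_s^*=\phi_0^*\circ(\pi')^\circ$ on each invariant prime divisor $D'$ of $\cX_s'$. For divisors supported away from the contracted locus --- in particular all $D_{P,\cdot}'$ with $P\neq 0,s$ and the $D_\pm'$ not adjacent to the center --- both composites return the corresponding divisor, because the pullbacks act as strict transforms there and the formulas for $\pim$ and $(\pi')^\circ$ differ only in the single summand indexed by the removed edge $\overline{v_0v_s}$. All the content is therefore concentrated in the divisors neighboring $E_s$ in $\cX_s$ (respectively $E_0$ in $\cX_0$): there $\phi_s^*$ and $\phi_0^*$ each pick up the exceptional divisor with a coefficient equal to the value of the relevant support function on the new ray, which equals $1$ by the relation $n_{v_0+v_s}=n_{v'}+n_{v''}$ valid for a $(-1)$-curve. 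Matching the two composites then reduces to a bookkeeping identity among the lattice lengths $\mu$ attached to $v_0$, $v_s$ and the neighbors $v',v''$ of $v_0+v_s$ in $\M_0^{(0)}$, for which the relation $\mu(v_0+v_s)=\mu(v')+\mu(v'')$ from the proof of Lemma \ref{lem:exdiv:valone} is the key input.

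The main obstacle is exactly this last case, where the contracted curve lies over the point $0$. There the graph changes, $G'=G\setminus\overline{v_0v_s}$, so the Minkowski-type decomposition encoded by the edges and the $\mu$-normalizations of non-lattice vertices must be tracked simultaneously; one has to check that passing to $G'$, pulling back by $\phi_0^*$, and the strict-transform behavior of $\phi_s^*$ all redistribute the exceptional contribution consistently. The remaining cases ($P\neq 0,s$ and $D_\pm$), where $G'=G$ and the two fibers degenerate in parallel, are routine by comparison. A cleaner, if less self-contained, alternative would be to exhibit a simultaneous equivariant contraction $\Phi\colon\cX\to\cX'$ of the total spaces restricting to $\phi_0$ and $\phi_s$, and to deduce the square from the compatibility of the canonical lift with $\Phi^*$; I would in any case fall back on the combinatorial computation above to justify that compatibility.
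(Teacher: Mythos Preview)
Your approach is exactly what the paper intends: its proof is the single sentence ``the claims follow from the description of the blowdown of a degeneration diagram, the description of $\pim$, and a straightforward calculation,'' and you have spelled out that calculation. Your case analysis for $\pim(E_s)=E_0$ is correct, including the observation that the valency-one vertex forces the opposite endpoint to be a lattice point (via connectedness of $G$ and the self-intersection constraint), which gives $\mu(v_0+v_s)=\mu(v_0)$; your fallback argument that a single-edge $G$ makes $D_{0,v_0+v_s}$ a fiber class (hence of nonnegative self-intersection, even after passing from $\tX$ to $X$) handles the residual case. For the square you have likewise identified the right reduction---only the divisors neighbouring the exceptional one carry content, and the relation $\mu(v_0+v_s)=\mu(v')+\mu(v'')$ from the proof of Lemma~\ref{lem:exdiv:valone} is indeed the combinatorial identity that makes the coefficients match. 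The commutativity verification remains a sketch in your write-up, but that is commensurate with the paper's own level of detail; there is no missing idea.
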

\begin{proof}
The claims follow from the description of the blowdown of a degeneration diagram, the description of $\pim$, and a straightforward calculation.
\end{proof}

\subsection{Deformation Connectedness}
Let $X$ and $X'$ be two smooth rational $\CC^*$-surfaces.
\begin{definition}\label{def:tdefcon}
        We say that $X$ and $X'$ are \emph{homogeneously deformation connected} if there is a finite sequence $X=X^0,X^1,\ldots,X^k=X'$ with $X^i$ homogeneously degenerating or deforming to $X^{i-1}$ for each $1\leq i \leq k$.
\end{definition}

It is well-known that a Hirzebruch surface of even parity cannot be deformed or degenerated to a Hirzebruch surface of odd parity and vice versa. An obstruction to such a deformation can be found by comparing the Chow rings. If we instead consider rational surfaces of fixed Picard number $\rho>2$, it is an easy exercise to see that all the Chow rings are isomorphic. Thus, the obstruction to deformation we had for the case $\rho=2$ no longer exists. In fact, for rational $\CC^*$-surfaces it is sufficient to consider homogeneous deformations:

\begin{theorem}\label{thm:defcon}
Consider the set of all smooth rational $\CC^*$-surfaces with Picard number $\rho$ for any integer $\rho > 2$. All elements of this set are homogeneously deformation connected. 
\end{theorem}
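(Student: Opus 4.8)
The plan is to reduce to the toric case and then to prove connectivity among smooth complete toric surfaces of fixed Picard number, using Theorem~\ref{thm:toricdef} for the lateral moves at fixed Picard number and Proposition~\ref{prop:blowupblowdown} to transport deformations through blowups.

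First I would reduce to the toric case. If a smooth rational $\CC^*$-surface $X(\M)$ has more than two nontrivial slices, I pick two of them, relabel the base so that they lie at $0$ and $s$, and form a degeneration diagram $(\M,G)$ whose graph joins the vertices of $\M_s$ to the origin of $\M_0$ and conversely; the special fiber $X(\M^{(0)})$ then has exactly one fewer nontrivial slice. Since $\bpim$ is an isomorphism of Picard groups by Proposition~\ref{prop:pim:preserves}, this degeneration preserves the Picard number, so after finitely many steps $X(\M)$ is homogeneously deformation connected to a smooth toric surface of the same Picard number; the only point requiring care is that the merged slice stay smooth, which can be arranged by the choice of $G$. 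Hence it suffices to treat smooth complete toric surfaces.

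Now fix $\rho\geq 3$. Every smooth complete toric surface of Picard number $\rho$ is non-minimal, hence an iterated equivariant blowup of a Hirzebruch surface $\F_n$, and I would argue by induction on $\rho$. For the inductive step, given two such surfaces $X,X'$ of Picard number $\rho+1$, I blow down an invariant $(-1)$-curve on each to obtain $\bar X,\bar X'$ of Picard number $\rho$; by the inductive hypothesis these are joined by a chain of homogeneous deformations, which I lift to Picard number $\rho+1$ by repeatedly applying Proposition~\ref{prop:blowupblowdown} to the total spaces. This connects $X$ to \emph{some} blowup of $\bar X'$, and it remains to connect that blowup to $X'$. Thus the step reduces to a sub-lemma: any two invariant blowups of a fixed toric surface are homogeneously deformation connected, which I would prove by degenerating a blown-up point along an invariant curve to a fixed point. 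For the base case $\rho=3$ I would connect the finitely many Picard-number-$3$ surfaces directly: the classical degenerations $\F_n\dashrightarrow\F_{n+2}$ are realized by the diagrams $(\M(r,\alpha),G)$ of Example~\ref{ex:hirzedefdiv}, and lifting them by Proposition~\ref{prop:blowupblowdown} connects a blowup of $\F_n$ to a blowup of $\F_{n+2}$ while staying at Picard number $3$; together with Theorem~\ref{thm:toricdef} and the sub-lemma this reduces every surface to a fixed normal form, for instance a standard blowup of $\PP^1\times\PP^1$.

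The main obstacle is exactly this toric connectivity, and it has two sensitive points. First, the matching of blowups: lifting a deformation chain through Proposition~\ref{prop:blowupblowdown} produces an \emph{uncontrolled} blowup at the far end, so the sub-lemma on deformation-connectedness of blowup choices (including the non-toric blowups that occur in the reduction step) is indispensable and must be established with care. Second, Theorem~\ref{thm:toricdef} applies only when an invariant curve of positive self-intersection is present, i.e. when some $b_0<0$; surfaces such as $\dP_7=\tv(1,1,1,0,0)$ have no such curve, so before its lateral moves become available one must first create a positive curve within the same Picard number by lifting a Hirzebruch degeneration.
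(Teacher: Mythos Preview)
Your overall architecture matches the paper's proof exactly: reduce to toric via repeated degenerations (the paper's Lemma~\ref{lemma:degentotoric}), induct on $\rho$ with base $\rho=3$ (Lemma~\ref{lemma:dckn}), and in the inductive step blow down, connect by hypothesis, lift via Proposition~\ref{prop:blowupblowdown}, and finish with the sub-lemma that any two invariant blowups of a fixed toric surface are connected (Lemma~\ref{lemma:nbdefcon}).

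The genuine gap is in your reduction to the toric case. You propose to pick any two nontrivial slices $\M_0,\M_s$ and form a graph joining each vertex of one slice to ``the origin'' of the other. But a degeneration diagram requires every vertex of valency greater than one to be a lattice point, and an arbitrary slice need not have an extremal lattice vertex (nor a vertex at $0$). The issue is not, as you suggest, arranging smoothness of the merged slice; it is the \emph{existence} of a valid bipartite graph $G$ at all. The paper's Lemma~\ref{lemma:degentotoric} does real work here: smoothness on the left and right forces, among more than three nontrivial slices, at least two whose extremal vertices on opposite sides are integral, and the residual three-slice case where this fails is shown to be impossible by an explicit Diophantine argument using the smoothness equations. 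Without this, your first step does not go through.

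Two smaller remarks. Your sub-lemma is exactly Lemma~\ref{lemma:nbdefcon}, which the paper proves by placing the ray $\rho_1$ horizontally and introducing an auxiliary slice $\overline\M_s$ with vertices $0,1$; you should expect to need a concrete construction like this rather than a soft ``degenerate the blown-up point along the curve'' argument. And your worry about $\dP_7$ having no $b_0<0$ is handled in the paper not by first creating a positive curve on $\dP_7$, but by passing through a third surface: for any two targets one picks $b_0''=-\max\{|b_0|,|b_0'|\}<0$ and applies Theorem~\ref{thm:toricdef} to $\tv(b_0'',0,1-b_0'',1,1)$, which deforms to both.
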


The proof of this theorem will constitute the remainder of this section. We first prove the following lemma:

\begin{lemma}\label{lemma:degentotoric}
Any smooth rational $\CC^*$-surface $X$ can be degenerated to a smooth toric surface via a finite number of homogeneous degenerations.
\end{lemma}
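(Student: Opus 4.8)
The plan is to induct on the number of nontrivial slices of the multidivisor $\M$ with $X=X(\M)$. By the discussion in Section~\ref{sec:multidiv}, $X(\M)$ is toric as soon as $\M$ has at most two nontrivial slices, so it suffices to show: whenever $\M$ is smooth with at least three nontrivial slices, there is a homogeneous degeneration of $X(\M)$ to a smooth rational $\CC^*$-surface whose multidivisor has exactly one fewer nontrivial slice. Applying this repeatedly terminates at a toric surface.

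First I would reduce to the case $\M_+=\M_-=\nocontract$. Replacing a $\contract$ by $\nocontract$ amounts to blowing up $X(\M)$ at the fixed point into which the contracted curve collapses; as a blowup of a smooth surface at a point it is again smooth, and the exceptional curve $D_+$ or $D_-$ is a $(-1)$-curve, a property preserved in every fiber of every degeneration by Proposition~\ref{prop:pim:preserves}. Hence, if the lemma holds in the $\nocontract$ case, I may run the whole sequence of degenerations with $\M_\pm=\nocontract$ and then toggle $\M_\pm$ back to $\contract$ uniformly; by the blowdown description of degeneration diagrams this simultaneously blows down $D_\pm$ in all fibers and yields a sequence of homogeneous degenerations of $X(\M)$ itself, ending at an invariant blowdown of the toric surface produced in the $\nocontract$ case, which is again toric. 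The point of this reduction is that when $\M_\pm=\nocontract$ the smoothness conditions on the left and right force the leftmost and rightmost vertex of every slice to be a lattice point.

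Now I choose two nontrivial slices and, after an automorphism of $\PP^1$, place them at $0$ and $s$ with $s\neq 0,\infty$; write their vertices as $a_0<\dots<a_p$ for $\M_0$ and $b_0<\dots<b_q$ for $\M_s$. I build the degeneration diagram $(\M,G)$ whose graph $G$ is the ``$L$-shaped'' staircase: join $a_0$ to every $b_j$ and $b_q$ to every $a_i$. One checks immediately that $G$ is connected, can be drawn with nonoverlapping segments, and satisfies the valency condition, since its vertices of valency $>1$ are among $a_0$ and $b_q$, which are lattice points by the reduction. Its special fiber replaces $\M_0$ and $\M_s$ by the single slice $\M_0^{(0)}$ with vertex set $\{a_0+b_j\}_{j}\cup\{a_i+b_q\}_{i}$, leaving the remaining slices untouched, so the number of nontrivial slices drops by exactly one.

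The main obstacle is to verify that $\M_0^{(0)}$ is again smooth, so that the induction may proceed. Its extreme vertices $a_0+b_0$ and $a_p+b_q$ are integral, so by Proposition~\ref{prop:smooth} it suffices to check smoothness in the middle, i.e.\ that consecutive vertices $u<u'$ satisfy $\mu(u)\,\mu(u')\,(u'-u)=1$. Along the first arm each such pair is $u=a_0+b_j$, $u'=a_0+b_{j+1}$; since $a_0\in\ZZ$ we have $\mu(u)=\mu(b_j)$ and $\mu(u')=\mu(b_{j+1})$, so the identity is exactly the smoothness of $\M_s$. Along the second arm each pair is $a_i+b_q$, $a_{i+1}+b_q$ with $b_q\in\ZZ$, and the identity becomes the smoothness of $\M_0$. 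Thus $\M_0^{(0)}$ is smooth, which finishes the inductive step and hence the proof. The one place where a naive approach could fail — an arbitrary merging graph could produce a nonsmooth combined slice — is precisely what the $L$-shaped choice together with the $\nocontract$ reduction is designed to avoid, by guaranteeing that every step of the staircase is taken at an integral vertex.
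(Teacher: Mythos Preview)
Your merging step via the $L$-shaped graph is exactly what the paper does, and your smoothness check for the merged slice is correct. The gap is in the reduction to $\M_\pm=\nocontract$.

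Changing $\contract$ to $\nocontract$ is \emph{not} in general a blowup of $X(\M)$ at a smooth point. By definition, the $\contract$-version of $X(\M)$ is obtained from the $\nocontract$-version by contracting the fixed curve $D_+$ (or $D_-$); this contraction is a smooth blowdown only when $D_\pm$ is a $(-1)$-curve, which is not forced by smoothness of the $\contract$-version. Concretely, take the toric multidivisor with $\M_0$ having vertices $\{-1,0,\tfrac12\}$, $\M_\infty$ having vertex $\{0\}$, $\M_-=\nocontract$, $\M_+=\contract$. One checks with Proposition~\ref{prop:smooth} that $\M$ is smooth (the cone $\sigma_+=\langle(1,2),(0,-1)\rangle$ is smooth), but switching $\M_+$ to $\nocontract$ forces the rightmost cone $\langle(1,2),(1,0)\rangle$, which has determinant $-2$ and is singular. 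So the ``blown-up'' surface is not smooth, $D_+$ is not a $(-1)$-curve, and the proposed toggle back at the end is not a blowdown of smooth surfaces. Thus the reduction fails and with it the claim that every slice has integral extreme vertices.

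The paper avoids this by never passing to $\nocontract$. Instead it argues directly from the smoothness conditions that one can always find two distinct nontrivial slices $\M_P,\M_Q$ with an integral leftmost vertex in one and an integral rightmost vertex in the other, and then runs your $L$-shaped merge on those. When only three nontrivial slices remain and no such pair exists, the residual configuration is shown to be impossible by a short Diophantine argument using the smoothness equations. That combinatorial impossibility is the missing ingredient replacing your reduction step.
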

\begin{proof}
        Let $\M$ be a multidivisor with $X=X(\M)$. Suppose that $\M$ has more than three non-trivial subdivisions. Then there are non-trivial subdivisions $\M_P,\M_Q$ with $P\neq Q$ such that the left-most vertex $w_P$ of $\M_P$ and the right-most vertex $w_Q$ of $\M_Q$ are lattice points; this follows from the smoothness criterion of Proposition \ref{prop:smooth}. Setting $0=P,s=Q$ and considering the graph $G$ on the vertices of $\M_P,\M_Q$ with edges of the form $\overline{v_0w_Q}$ and $\overline{v_sw_P}$ for $v_0\in\M_P$, $v_s\in\M_Q$ gives a degeneration diagram $(\M,G)$. The multidivisor for the special fiber has one less non-trivial subdivision than $\M$.

        We can apply the above procedure inductively, and can thus assume that $\M$ has at most three non-trivial subdivisions. If $\M$ has less than three non-trivial subdivisions, then $X(\M)$ is toric, and we are done. If as above there are non-trivial subdivisions $\M_P,\M_Q$ with $P\neq Q$ such that the left-most vertex $v_P$ of $\M_P$ and the right-most vertex $v_Q$ of $\M_Q$ are lattice points, then we can once again proceed as above and degenerate to something with only two non-trivial subdivisions. We thus must only consider the remaining case, which is that where $\M$ has three non-trivial subdivisions $\M_0,\M_1,\M_\infty$ and $\M_0,\M_\infty$ have no extremal lattice vertices and both extremal vertices of $\M_1$ are lattice points. We show that this is actually impossible.

        In this case, we can actually assume that the left-most vertex of $\M_1$ is $0$, and that the right-most vertex is $n$. Let $u_0^l/v_0^l$, $u_\infty^l/v_\infty^l$ be the left-most vertices of $\M_0$ and $\M_\infty$ written in lowest terms and let $u_0^r/v_0^r$, $u_\infty^r/v_\infty^r$ similarly be the right-most vertices. Due to smoothness we have
\begin{align}
        -u_0^lv_\infty^l-u_\infty^lv_0^l&=1;\label{eqn:smooth1}\\
        u_0^rv_\infty^r+u_\infty^rv_0^r+v_0^rv_\infty^r\cdot n &= 1.\label{eqn:smooth2}
\end{align}
Furthermore, we of course have
\begin{equation}\label{eqn:smooth3}
        u_P^lv_P^r\leq u_P^rv_P^l\end{equation}
        for $P=0,\infty$. Solving equations \eqref{eqn:smooth1} and  \eqref{eqn:smooth2} for $v_0^l$ and $v_0^r$, substituting for these expressions in \eqref{eqn:smooth3} for $P=0$, and rearranging terms gives us
        \begin{equation*}
v_0^rv_\infty^r+v_0^lv_\infty^l+u_\infty^lv_\infty^rv_0^rv_0^l\geq u_\infty^rv_\infty^lv_0^lv_0^r+v_0^lv_\infty^lv_0^rv_\infty^r n.
\end{equation*}
Combining this with \eqref{eqn:smooth3} for $P=\infty$ then gives us
\begin{equation*}
v_0^rv_\infty^r+v_0^lv_\infty^l\geq v_0^lv_\infty^lv_0^rv_\infty^r n.
\end{equation*}
This however is a contradiction, since $n\geq 1$ and $v_0^l,v_\infty^l,v_0^r,v_\infty^r\geq 2$. Thus, this case never arises and we can always degenerate to a toric surface.
\end{proof}

In general, one can always construct a smooth rational surface of Picard rank higher than two by iteratively blowing up a Hirzebruch surface in a number of points. This can in fact be done equivariantly for smooth rational $\CC^*$-surfaces. For multidivisors $\M$ with $\M_+=\M_-=\nocontract$, this is stated in \cite{orlik:77a}. However, we know of no proof of the general case and thus provide one here as an easy corollary of the above lemma:

\begin{cor}\label{cor:equiblowup}
Any smooth rational $\CC^*$-surface $X$  with Picard number larger than two can be constructed from a Hirzebruch surface by a series of equivariant blowups.
\end{cor}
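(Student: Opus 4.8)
The plan is to deduce the statement from Lemma~\ref{lemma:degentotoric}, Proposition~\ref{prop:blowupblowdown}(\ref{prop:blowupblowdown1}), and the classical structure theory of smooth complete toric surfaces, which says that every such surface of Picard number greater than two is obtained from a Hirzebruch surface by a chain of equivariant contractions of invariant $(-1)$-curves (being neither $\PP^2$, of Picard number one, nor a minimal Hirzebruch surface, of Picard number two). The whole point of the $\CC^*$-machinery is that Proposition~\ref{prop:blowupblowdown}(\ref{prop:blowupblowdown1}) lets one pull an equivariant blowdown of a special fiber back to an equivariant blowdown of the general fiber, so a toric contraction sequence can be transported onto an arbitrary $\CC^*$-surface.

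First I would apply Lemma~\ref{lemma:degentotoric} to produce a chain of homogeneous degenerations
$$X = Y_0 \dashrightarrow Y_1 \dashrightarrow \cdots \dashrightarrow Y_m,$$
with $Y_m$ a smooth toric surface, where each step $Y_j \dashrightarrow Y_{j+1}$ comes from a degeneration diagram whose general fiber is $Y_j$ and whose special fiber is $Y_{j+1}$. Since $\bpim$ is an isomorphism of Picard groups (Proposition~\ref{prop:pim:preserves}), every $Y_j$ has the same Picard number as $X$; in particular $Y_m$ is a smooth complete toric surface of Picard number larger than two. By the classical fact recalled above, $Y_m$ admits a sequence of equivariant contractions of invariant $(-1)$-curves ending at a Hirzebruch surface $\F$.

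The heart of the proof is to transport this contraction sequence back along the chain. Working from $j = m-1$ down to $j=0$, I would repeatedly invoke Proposition~\ref{prop:blowupblowdown}(\ref{prop:blowupblowdown1}): contracting an invariant $(-1)$-curve on the special fiber of the current degeneration diagram simultaneously yields a contraction of the corresponding invariant curve on the general fiber \emph{and} a new degeneration diagram relating the two contracted surfaces, to which the construction applies again. Pushing the entire contraction sequence through all $m$ diagrams contracts $X = Y_0$ down to a smooth rational $\CC^*$-surface $X''$ which homogeneously degenerates to $\F$. As Picard number is preserved under these degenerations, $X''$ has Picard number two, hence is itself a Hirzebruch surface. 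Undoing all the contractions then exhibits $X$ as an iterated equivariant blowup of $X''$, which is exactly the assertion.

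I expect the main obstacle to be the bookkeeping in this transfer step, namely verifying that Proposition~\ref{prop:blowupblowdown}(\ref{prop:blowupblowdown1}) really can be iterated: after each transferred contraction one must confirm that the new degeneration diagram again has the contracted surface as its special fiber, so that the next invariant $(-1)$-curve in the toric sequence is available to be pulled back. The crucial feature of the machinery, and the reason a naive comparison of Picard lattices would not be enough, is that the proposition transfers an honest invariant $(-1)$-curve rather than merely a $(-1)$-class, which is precisely what is required to carry out an equivariant blowdown.
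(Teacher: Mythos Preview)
Your proposal is correct and uses the same ingredients as the paper: Lemma~\ref{lemma:degentotoric} to reach a toric special fiber, the existence of an invariant $(-1)$-curve there, and Proposition~\ref{prop:blowupblowdown}(\ref{prop:blowupblowdown1}) to transport the contraction back through the chain of degeneration diagrams.

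The only organizational difference is that the paper phrases the argument as a straight induction on the Picard number: degenerate $X$ to a toric $\cX_0$, pull back \emph{one} invariant $(-1)$-curve to get an equivariant blowdown $X\to\cX_s'$, and then invoke the induction hypothesis on $\cX_s'$ (which in particular re-degenerates $\cX_s'$ from scratch). You instead fix the degeneration chain once and for all, blow down the entire toric contraction sequence on $Y_m$, and transport each step back through the already-blown-down chain. Your version avoids re-applying Lemma~\ref{lemma:degentotoric} at every stage, at the cost of the double bookkeeping you flag; the paper's version hides that bookkeeping inside the induction. Both are valid and neither is materially shorter.
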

\begin{proof}
        Suppose that $X=\cX_s$ isn't a Hirzebruch surface. By Lemma \ref{lemma:degentotoric}, we know that $X$  degenerates to some toric variety $\cX_0$ via a chain of homogeneous deformations. But there is an invariant minus one curve on $\cX_0$ which can be blown down, since $\cX_0$ is toric, see \cite{fulton:93a}. Blowing down the deformations from $\cX_0$ to $\cX_s$ as in Proposition \ref{prop:blowupblowdown}  gives us a new general fiber $\cX_s'$ which is an invariant blowdown of $\cX_s$. The proof then follows by induction on the Picard number.
\end{proof}

We will collect several more lemmata which we shall need:

\begin{lemma}\label{lemma:nbdefcon}
Consider a smooth fan $\Sigma$ with rays $\rho_0,\ldots,\rho_l$. Let $\Sigma_1$ and $\Sigma_2$ be the smooth fans attained by inserting a ray between $\rho_0$ and $\rho_1$ respectively $\rho_1$ and $\rho_2$. Then $\tv(\Sigma_1)$ is homogeneously deformation connected to $\tv(\Sigma_2)$.
\end{lemma}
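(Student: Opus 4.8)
The plan is to exhibit a single toric surface to which both $\tv(\Sigma_1)$ and $\tv(\Sigma_2)$ homogeneously degenerate; deformation connectedness then follows immediately from Definition \ref{def:tdefcon}.

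First I would normalize. Since $\rho_1$ is primitive, a lattice automorphism brings it to $\rho_1=(1,0)$. Reading the rays in counterclockwise order, $\rho_0$ and $\rho_2$ are the neighbors of $\rho_1$, lying just below and just above the positive $x$-axis; smoothness of the cones $\langle\rho_0,\rho_1\rangle$ and $\langle\rho_1,\rho_2\rangle$ then forces $\rho_0=(a_0,-1)$ and $\rho_2=(a_2,1)$ for integers $a_0,a_2$. A smooth blowup inserts the sum of the two spanning rays, so the ray added to form $\Sigma_1$ is $\rho_0+\rho_1=(a_0+1,-1)$ and the ray added to form $\Sigma_2$ is $\rho_1+\rho_2=(a_2+1,1)$.

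Next I would present $\tv(\Sigma)$ as a multidivisor with the horizontal ray $\rho_1=(1,0)$ giving $\M_+=\nocontract$ (so that $D_+=D_{\rho_1}$), with the rays of positive second coordinate forming a slice $\M_0$ over $0\in\PP^1$ and those of negative second coordinate a slice $\M_s$ over some $s$; here $\rho_2$ is the rightmost vertex $a_2$ of $\M_0$ and $\rho_0$ the rightmost vertex $a_0$ of $\M_s$. I would then build a degeneration diagram $(\M,G)$ for this multidivisor in which the edge $\overline{a_2a_0}$ joining the two rightmost vertices is present. Because every vertex of $\M_0$ is $\le a_2$ and every vertex of $\M_s$ is $\le a_0$, the rightmost vertex of the merged subdivision $\M_0^{(0)}$ is then exactly $a_0+a_2$, realized by this edge.

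Now blow up the special fiber $\cX_0=X(\M^{(0)})$ at the fixed point where $D_+$ meets the divisor of the rightmost vertex of $\M_0^{(0)}$; this is the insertion of the extremal vertex $v=(a_0+a_2)+1$ into $\M_0^{(0)}$, since $(a_0+a_2,1)+(1,0)=(a_0+a_2+1,1)$. Because $v$ is extremal, the discussion preceding Proposition \ref{prop:blowupblowdown} shows that this blowup of $\cX_0$ lifts to the general fiber in two ways, by inserting the new vertex $\tilde v$ into $\M_0$ or into $\M_s$, each giving a degeneration diagram with the \emph{same} special fiber $\tilde{\cX}_0$ (the blowup of $\cX_0$ at $v$). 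Matching the Minkowski sum $v=v_0+v_s$ against the old rightmost vertices: lifting into $\M_0$ forces the new vertex to have value $v-a_0=a_2+1$, producing the up-ray $(a_2+1,1)$ and hence $\tv(\Sigma_2)$; lifting into $\M_s$ forces value $v-a_2=a_0+1$, producing the down-ray $(a_0+1,-1)$ and hence $\tv(\Sigma_1)$. Thus $\tv(\Sigma_1)$ and $\tv(\Sigma_2)$ both homogeneously degenerate to $\tilde{\cX}_0$, and are therefore homogeneously deformation connected.

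The main obstacle is the bookkeeping in the last step: one must verify that the two liftings of the single extremal vertex $v$ land exactly on the rays $(a_2+1,1)$ and $(a_0+1,-1)$ prescribed by $\Sigma_2$ and $\Sigma_1$. This rests on the numerical coincidence $v-a_0=a_2+1$ and $v-a_2=a_0+1$, i.e. on having chosen $v=a_0+a_2+1$ as the unique value for which both liftings come out correctly; one must also confirm that $(\M,G)$ and both lifted diagrams genuinely satisfy the conditions of a degeneration diagram and that the blowup is smooth, all of which hold once $\overline{a_2a_0}\in G$ and $v$ is extremal. A minor point to dispose of is that one of the two base slices may consist of a single vertex; the argument is unaffected, since only the lifted diagrams — which are genuine — enter the conclusion.
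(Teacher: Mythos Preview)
Your strategy---find a common toric \emph{special} fiber $\tilde{\cX}_0$ to which both $\tv(\Sigma_1)$ and $\tv(\Sigma_2)$ homogeneously degenerate, obtained by lifting a single extremal blowup of $\cX_0$ in the two possible ways---is genuinely different from the paper's. The paper instead produces a common (non-toric) \emph{general} fiber: it adds a third slice $\overline\M_s$ with vertices $\{0,1\}$ to the two-slice multidivisor for $\tv(\Sigma)$ and merges this new slice into $\M_{P_1}$ or into $\M_{P_2}$ via two simple graphs $G_1,G_2$; the two resulting special fibers are exactly $\tv(\Sigma_1)$ and $\tv(\Sigma_2)$, with $X(\overline\M)$ the shared general fiber.

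Your argument has a real gap, however: the degeneration diagram $(\M,G)$ you need may fail to exist. Take the smooth complete fan with rays
\[
\rho_0=(1,-1),\ \rho_1=(1,0),\ \rho_2=(0,1),\ \rho_3=(-1,2),\ \rho_4=(1,-3),\ \rho_5=(1,-2).
\]
Then $\M_0=\{-\tfrac12,0\}$ and $\M_s=\{\tfrac13,\tfrac12,1\}$, with $a_2=0$ and $a_0=1$ the only lattice vertices. Since $-\tfrac12$, $\tfrac13$, $\tfrac12$ must each have valency one, connectivity forces $-\tfrac12$ to be joined to $1\in\M_s$ and forces $\tfrac13,\tfrac12$ to be joined to $0\in\M_0$; but then $\overline{(-\tfrac12)_0\,1_s}$ crosses $\overline{0_0\,(\tfrac13)_s}$, so no planar $G$ exists. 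The same obstruction persists after inserting your extremal vertex (in either slice), so neither lifted diagram exists either. The paper's construction sidesteps this entirely: its auxiliary slice $\{0,1\}$ consists of lattice points, and each graph $G_i$ is just a fan from $0\in\overline\M_s$ together with one extra rightmost edge, so the valency and planarity conditions are automatic.
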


\begin{figure}[htbp]
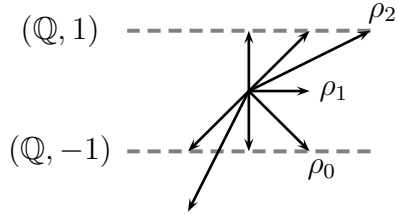

        \begin{center}
        {\buprooffan}
        \end{center}
        \caption{Example fan in proof of Lemma \ref{lemma:nbdefcon}}\label{fig:lemmaproof}
\end{figure}

\begin{proof}
        As in the proof of Theorem \ref{thm:toricdef}, we transform $\Sigma$ into a multidivisor $\M$. After a lattice transformation, we may assume that $\rho_1=(1,0)$, $\rho_0$ lies in height $-1$, and $\rho_1$ lies in height $1$.
For $P_1\neq P_2\in \PP^1$, let $\M_{P_1}$ and $\M_{P_2}$  be the subdivisions induced by $\Sigma$ on the affine lines $(\QQ,1)$ and $(\QQ,-1)$. $\M_+=\nocontract$, and $\M_-=\contract$ unless $(-1,0)$ is in $\Sigma(1)$. See for example Figure \ref{fig:lemmaproof}.

        Now, for some $s\in \PP^1\setminus\{P_1,P_2\}$, let $\overline{\M}$ be the multidivisor with $\overline{\M}_{P}=\M_P$ for $P\neq s$, and $\overline{\M}_s$ the subdivision of $\QQ$ with vertices $0$ and $1$. For $i=1,2$, let $G_i$ be the graph on the vertices of $\overline{\M}_s$ and $\overline{\M}_{P_i}$ with edges $\overline{v_s w}$ for either vertices $v_s=0\in\overline{\M}_s$ and $w\in \overline{\M}_{P_i}$ or vertices $v_s=1\in\overline{\M}_s$ and $w$ the right-most vertex in $\overline{\M}_{P_i}$. Setting $P_i=0$, one easily checks that $(\overline{\M},G_i)$ is a degeneration diagram with general fiber $X(\overline{\M})$ and special fiber $\tv(\Sigma_i)$, see for example Figure \ref{fig:lemmaproofb}. Thus, we have homogeneous deformations from both $\tv(\Sigma_1)$ and $\tv(\Sigma_2)$ to some common rational $\CC^*$-surface, making them homogeneously deformation connected.
\end{proof}

\begin{figure}[htbp]
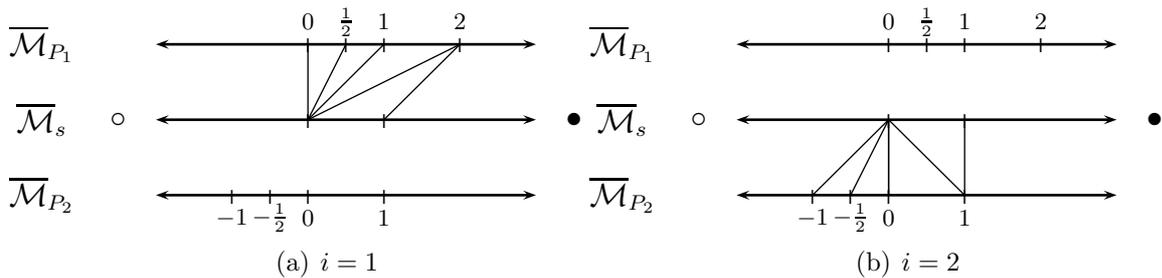

        \begin{center}
        \subfigure[$i=1$]{\proofdga}
        \subfigure[$i=2$]{\proofdgb}
        \end{center}
        \caption{Example degeneration diagrams in proof of Lemma \ref{lemma:nbdefcon}}\label{fig:lemmaproofb}
\end{figure}

\begin{rem}
        The two deformations constructed in the above proof can be naturally glued together to give a flat family $\cX$ over $\PP^1$ with fibers $\cX_0=\tv(\Sigma_1)$ and $\cX_\infty=\tv(\Sigma_2)$. In this family, the fiber over any point $s\in\PP^1$ is simply the blowup of $\tv(\Sigma)$ in $s$, where we have identified the base space $\PP^1$ with the divisor corresponding to the ray $\rho_1$.
\end{rem}

\begin{lemma}\label{lemma:dckn}
	The set $\left\{\tv(b_0,0,b_\alpha,1,1)\ |\ b_0+b_\alpha=1\right\}$ consists of all (smooth, projective) toric surfaces of Picard rank $3$ and is homogeneously deformation connected.
\end{lemma}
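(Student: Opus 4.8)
The plan is to parametrize the set explicitly and treat its two assertions—that it exhausts all rank-$3$ toric surfaces and that it is deformation connected—separately. Writing $b_0=m$ and $b_\alpha=1-m$, the set is $\{X_m\mid m\in\ZZ\}$, where $X_m\coloneqq\tv(m,0,1-m,1,1)$. First I would check that each $X_m$ is genuinely a smooth complete toric surface of Picard rank $3$: with $l=4$ the Picard number is $l-1=3$, and the self-intersection sum is $m+0+(1-m)+1+1=3=3l-9$, as required. Smoothness and completeness follow by exhibiting the fan, for instance the rays $(1,0),(0,1),(-1,0),(m-1,-1),(m,-1)$ in counterclockwise order, whose cyclically consecutive pairs all have determinant $1$.

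For surjectivity onto all rank-$3$ toric surfaces, I would argue that any smooth complete toric surface $X$ of Picard rank $3$ is non-minimal and hence carries an invariant $(-1)$-curve (standard for toric surfaces, see \cite{fulton:93a}); contracting it yields a smooth complete toric surface of Picard rank $2$, i.e. a Hirzebruch surface $\F_n=\tv(0,n,0,-n)$. Thus $X$ is the blowup of some $\F_n$ at a torus-fixed point. Running through the four choices of fixed point and recording the effect on the self-intersection sequence—the new ray gets $b=1$ and the two adjacent entries increase by $1$—each resulting sequence, after a cyclic rotation, takes the form $(b_0,0,b_\alpha,1,1)$ with $b_0+b_\alpha=1$. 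This identifies $X$ with some $X_m$ and establishes the first claim.

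The connectedness is the heart of the matter, and here I would exploit Theorem \ref{thm:toricdef}. For $X_m$ with $m<0$ the hypotheses $b_0=m<0$ and $l>2$ hold, the distinguished index is $\alpha=2$ (since $[b_1]=[0]=0$, so $\rho_2=-\rho_0$), and the invariant of \eqref{eqn:gamma} is $\gamma=(3-b_1)-3=0$ by Lemma \ref{lemma:gamma}. Theorem \ref{thm:toricdef} then produces a homogeneous deformation of $X_m$ with toric general fiber $\tv(m+2r,0,(1-m)-2r,1,1)=X_{m+2r}$ for every $0\le r\le -m$. Hence $X_m$ is deformation connected (Definition \ref{def:tdefcon}) to $X_{m'}$ for all $m'\equiv m\pmod 2$ with $m\le m'\le -m$.

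These moves preserve the parity of $m$, so by themselves they connect only within a single parity class. To bridge the two classes I would use the reflection isomorphism: a smooth complete toric surface depends on its self-intersection sequence only up to cyclic rotation and reversal, and reversing $(m,0,1-m,1,1)$ and rotating gives $(1-m,0,m,1,1)$, so $X_m\cong X_{1-m}$; in particular $X_0\cong X_1$. Combining this with the deformations above, a short induction on $|m|$ shows that every even-indexed $X_m$ is deformation connected to $X_0$ and every odd-indexed $X_m$ to $X_1$; since $X_0\cong X_1$, the whole family is connected. The one point demanding care—the main obstacle—is precisely this parity issue: Theorem \ref{thm:toricdef} alone cannot change the parity of $m$, and it is the reflection isomorphism $X_0\cong X_1$ that supplies the missing link between the two parity classes.
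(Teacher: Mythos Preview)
Your proof is correct and follows essentially the same approach as the paper's: both obtain all rank-$3$ toric surfaces as blowups of Hirzebruch surfaces, apply Theorem~\ref{thm:toricdef} (with $\alpha=2$, $\gamma=0$) to connect surfaces within a parity class, and use the reflection symmetry $X_m\cong X_{1-m}$ to bridge the two parity classes. The paper organizes the connectedness argument slightly differently---first invoking symmetry to reduce to the same-parity case, then exhibiting a single common degeneration---but the ingredients are identical.
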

\begin{proof}
        The Hirzebruch surface $\F_r$ corresponds to the tuple $(r,0,-r,0)$. Since any rank $3$ toric surface may be attained by an invariant blowup from a Hirzebruch surface, these correspond exactly to the tuples $(r+1,1,1,-r,0)$ and $(r,1,1,-r+1,0)$ up to cyclic permutation. Thus, rank $3$ toric surfaces are exactly of the desired form.

	Consider now $b_0,b_\alpha$ and $b_0',b_\alpha'$ such that $b_0+b_\alpha=b_0'+b_\alpha'=1$. Due to symmetry we can assume that $b_0,b_0'$ have the same parity. Let $b_0''=-\max\{ |b_0|,|b_0'|\}$. Then $\tv(b_0'',0,n-b_0'',1,1)$ deforms to both $\tv(b_0,0,b_\alpha,1,1)$ and to $\tv(b_0',0,b_\alpha',1,1)$ by Theorem \ref{thm:toricdef}, so the desired set is homogeneously deformation connected.
\end{proof}

We now turn to the proof of the above theorem:

\begin{proof}[Proof of Theorem \ref{thm:defcon}]
        We will prove the theorem by induction on $\rho$.  From Lemma \ref{lemma:degentotoric} we have that any smooth rational $\CC^*$-surface can be homogeneously degenerated to a smooth toric surface. Thus, for $\rho=3$ the statement then follows from Lemma \ref{lemma:dckn}.

        Assume that the theorem holds for Picard number $\rho$, and consider any two smooth rational $\CC^*$-surfaces $X^1,X^2$ with Picard number $\rho+1$. By again applying Lemma \ref{lemma:degentotoric}, we can assume without loss of generality that $X^1$ and $X^2$ are toric. Let $\tilde{X}^i$ be an invariant blowdown of $X^i$. Then $\tilde{X}^1$ and $\tilde{X}^2$ are homogeneously deformation connected by the induction hypothesis, and this series of deformations and degenerations can be blown up to connect $\hat{X}^1$ and $\hat{X}^2$, where $\hat{X}^i$ is an invariant blowup of $\tilde{X}^i$. Thus, we must only show that $\hat{X}^i$ and $X^i$ are homogeneously deformation connected, that is, any two invariant blowups in a point of a common toric surface are homogeneously deformation connected. But this follows from repeated application of Lemma \ref{lemma:nbdefcon}, proving the theorem.
\end{proof}

\section{Exceptional Sequences and Toric Systems}\label{sec:exceptional}
We first recall basic definitions for exceptional sequences and toric systems in Section \ref{sec:toricsystems}, and then deal with the process of augmentation in Section \ref{sec:augmentation}.

\subsection{Basics}
\label{sec:toricsystems}

In the following all surfaces are smooth and complete.
For general features of derived categories in algebraic geometry we refer to 
\cite{huybrechts}. By $\D^b(X)$ we denote the bounded derived category of 
coherent sheaves on some complex variety $X$.

\begin{definition}
        An object $E$ of $\D^b(X)$ is called \emph{exceptional} if it fulfills
\[
\Ext^i (E,E)=
\begin{cases}
\CC & \textnormal{if $i=0$}\\
0 & \textnormal{if $i\not=0$}.\\
\end{cases}
\]
An \emph{exceptional sequence} $\Ex$ is a finite sequence of exceptional objects $(E_1,\ldots,E_n)$
such that there are no morphisms back, that is, $\Ext^k (E_j,E_i)=0$ for $j>i$ and all $k$. Such a sequence is called \emph{strongly exceptional} if additionally $\Ext^k (E_j,E_i)=0$ for all $i,j$ and all $k>0$.
An exceptional sequence is called \emph{full} if $E_1,\ldots,E_n$ generate $\D^b(X)$, that is, the smallest full triangulated subcategory of $\D^b(X)$ containing all $E_i$'s is already $\D^b(X)$.
\end{definition}

We are primarily interested in full exceptional sequences of line bundles on rational surfaces. It is easy to see that the projection of the elements of such a sequence to $K_0(X)$ form a basis there, implying that any such sequence must have length equal to $\rk K_0(X)$. On the other hand, it is unknown whether exceptional sequences of this length are automatically full.
Unless explicitly stated otherwise, all exceptional sequences we consider will be  of length $\rk K_0(X)$ and consist only of line bundles. Furthermore, if $\phi\colon\tilde{X}\to X$ is some blowup and $\mathcal{L}$ is a line bundle on $X$, we will often use $\mathcal{L}$ to denote $\phi^*(\mathcal{L})$ as well, as long as the meaning is clear.

A related concept introduced by Hille and Perling \cite{hillperl08} are so-called \emph{toric systems}:
\begin{definition}
\label{def:toricsystem}
        A \emph{toric system}  on a rational surface $X$ of Picard number $n-2$ is a  sequence of divisor classes $\Ts=(A_1,\ldots,A_n)$ such that 
        \begin{enumerate}
                \item $A_i.A_{i+1}=1$;
                \item $A_i.A_j=0$ for $j\notin \{i-1,i,i+1\}$;
                \item $\sum_{i=1}^n A_i=-K_X$;
        \end{enumerate}
where we consider indices cyclically modulo $n$.
\end{definition}
One of the very nice ideas in \cite{hillperl08} is that from every 
toric system $\Ts=(A_1,\ldots,A_n)$ on a rational surface $X$ we can construct a toric surface $\tv(\Ts)$. Indeed, if we set $-b_i=\chi(\CO(A_i))-2$, then $\tv(\Ts)\coloneqq\tv(b_1,\ldots b_n)$ is a smooth toric surface. 
On the other hand, starting with some rational surface $X$ with an exceptional sequence $\Ex=(\CO(E_1),\ldots,\CO(E_n))$, we can construct an associated toric system $\Ts$, by setting
\begin{equation}\label{eqn:ts}
A_i \coloneqq E_{i+1}-E_i \textnormal{ and } A_n \coloneqq (E_1-K_X)-E_n.
\end{equation}
If a toric system $\Ts$ can be constructed in this manner, we call it \emph{exceptional}. If $\Ts$ can be constructed in this manner from a full/strongly exceptional sequence $\Ex$, we call it \emph{full/strongly exceptional}. Of course, a toric system $\Ts$ is (full/strongly) exceptional if and only if $(\CO,\CO(A_1),\CO(A_1+A_2),\ldots,\CO(\sum_{i=1}^{n-1} A_i))$ forms a (full/strongly) exceptional sequence; in such cases we call this the exceptional sequence associated to $\Ts$. Similarly, $\Ts$ is (strongly) exceptional if and only if 
\begin{equation*}
        H^l\left(X,\CO\left(\sum_{i=j}^k -A_i\right)\right)=0 \qquad \big(\textrm{and }H^{l+1}\left(X,\CO\left(\sum_{i=j}^k A_i\right)\right)=0\big)
\end{equation*}
for all $l\geq 0$ and $1\leq j \leq k<n$. Any cyclic permutation or reflection of the indices takes an (exceptional) toric system to an (exceptional) toric system and doesn't change the associated toric variety.\footnote{The property of being strongly exceptional is not invariant under cyclic permutation.} Finally, note that if $X$ is a toric surface, the invariant divisors $D_i$ circularly ordered form an exceptional toric system  $\Ts=(D_1,\ldots,D_n)$ which we call the \emph{canonical toric system}. In this case, we have $\tv(\Ts)=X$.

\subsection{Augmentation}
\label{sec:augmentation}

It follows from Proposition \ref{prop:blowupblowdown} that any homogeneous deformation of rational $\CC^*$-surfaces can be attained by repeatedly blowing up a deformation of a Hirzebruch surface.  For exceptional sequences, the situation is somewhat similar: In \cite{hillperl08}, a construction called \emph{augmentation} was established, which constructs new toric systems from blowups.  We  recall this notion,  adapting notation slightly.

\begin{definition}
\label{def:augmentation}
Let $\Ts=(A_1,\ldots,A_n)$ be a toric system on a rational surface $X$ and $\tilde{X} \rightarrow X$ a blowup
in one point with exceptional divisor $R$.
Then for every $1 \leq i \leq n$ we can construct  a toric system on $\tilde{X}$
\[
\Aug_i \Ts = (A_1,\ldots,A_{i-1},A_i-R,R,A_{i+1}-R,A_{i+2},\ldots,A_n)
\]
called the \emph{augmentation} of $\Ts$ at the position $i$. For any sequence of blowups, let $\Aug_{i_n,\ldots,i_1}\Ts$ denote the repeated augmentation of a toric system $\Ts$ at the positions $i_1,\ldots,i_n$.\footnote{Here we are only looking at what Hille and Perling call standard augmentations.}
\end{definition}

\begin{lemma}[{cf. \cite[Proposition 5.5]{hillperl08}}]\label{lemma:augex}
Let $\Ts$ be a toric system. Then $\Ts$ is (full) exceptional if and only if $\Aug_i \Ts$ is (full) exceptional.
\end{lemma}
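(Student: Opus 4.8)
The plan is to reduce the statement to Orlov's description of the derived category of a point blowup. Write $\Ts=(A_1,\dots,A_n)$, set $E_k=A_1+\dots+A_{k-1}$ (so $E_1=0$), and let $\Ex=(\CO(E_1),\dots,\CO(E_n))$ be the associated sequence on $X$; let $\phi\colon\tilde X\to X$ be the blowup with exceptional divisor $R$. Summing the entries of $\Aug_i\Ts$ shows that the sequence associated to $\Aug_i\Ts$ is, as line bundles on $\tilde X$,
\[
\widetilde\Ex=\bigl(\phi^*\CO(E_1),\dots,\phi^*\CO(E_i),\,\CO(E_{i+1}-R),\,\phi^*\CO(E_{i+1}),\,\CO(E_{i+2}-R),\dots,\CO(E_n-R)\bigr),
\]
so that each entry is either a pullback $\phi^*\CO(E_k)$ or a twist $\CO(E_k-R)=\phi^*\CO(E_k)\otimes\CO(-R)$. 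Because every line bundle on a rational surface is an exceptional object ($H^1(\CO)=H^2(\CO)=0$), being an exceptional sequence amounts in both cases to the vanishing of the backward $\Ext$-groups; fullness is a separate generation statement, and I would treat the two parts separately.

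For the exceptional part I would use $R\phi_*\CO_{\tilde X}=\CO_X$ (hence $\phi^*$ fully faithful), together with $R\phi_*\CO_{\tilde X}(R)=\CO_X$ and $R\phi_*\CO_{\tilde X}(-R)=\mathcal I_p$, where $\mathcal I_p$ is the ideal sheaf of the blown-up point. A short inspection of the ordering of $\widetilde\Ex$ shows that the $E$-indices are nondecreasing, so every backward pair has source index $s$ and target index $t$ with $s\ge t$, with equality \emph{only} for the adjacent pair $\phi^*\CO(E_{i+1})$ over $\CO(E_{i+1}-R)$. For the pairs with $s>t$ one checks these are never of the ``pullback-over-twist'' type, so the push-forwards above give $\Ext^\bullet=H^\bullet(X,\CO(E_t-E_s))$; these are exactly the cohomology groups whose vanishing is the exceptionality criterion for $\Ex$ recalled in Section~\ref{sec:toricsystems}. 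For the one remaining pair one computes $\Ext^\bullet(\phi^*\CO(E_{i+1}),\CO(E_{i+1}-R))=H^\bullet(\tilde X,\CO(-R))=H^\bullet(X,\mathcal I_p)=0$ unconditionally. Thus $\widetilde\Ex$ is exceptional if and only if $\Ex$ is.

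For fullness I would invoke Orlov's semiorthogonal decomposition for the blowup of a point, $\D^b(\tilde X)=\langle\phi^*\D^b(X),\CO_R\rangle$ with $\Hom^\bullet(\CO_R,\phi^*(-))=0$ (see \cite{huybrechts}). The key input is the structure sequence $0\to\CO(E_k-R)\to\phi^*\CO(E_k)\to\CO_R\to 0$, which uses $\phi^*\CO(E_k)|_R=\CO_R$. For $k=i+1$ both $\CO(E_{i+1}-R)$ and $\phi^*\CO(E_{i+1})$ are members of $\widetilde\Ex$, so $\CO_R\in\langle\widetilde\Ex\rangle$; feeding this back into the same sequences for the remaining $k$ yields $\langle\widetilde\Ex\rangle=\langle\phi^*\Ex,\CO_R\rangle$, where $\langle\phi^*\Ex\rangle$ denotes the subcategory generated by $\phi^*\CO(E_1),\dots,\phi^*\CO(E_n)$. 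Since $\Hom^\bullet(\CO_R,\phi^*(-))=0$, the projection of $\D^b(\tilde X)$ onto the factor $\phi^*\D^b(X)$ annihilates $\CO_R$; applying it gives $\langle\phi^*\Ex,\CO_R\rangle\cap\phi^*\D^b(X)=\langle\phi^*\Ex\rangle$, so $\langle\widetilde\Ex\rangle=\D^b(\tilde X)$ if and only if $\langle\phi^*\Ex\rangle=\phi^*\D^b(X)$. As $\phi^*$ is fully faithful, this holds precisely when $\langle\Ex\rangle=\D^b(X)$, i.e.\ when $\Ex$ is full.

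The bookkeeping of the partial sums and of the $\Ext$-types is routine. The one genuinely delicate point is the fullness step: one must identify that the extra generator in Orlov's decomposition is precisely the structure sheaf $\CO_R$ (and not a twist of it), so that it is exactly the object produced by the structure sequence above, and then deduce generation from the semiorthogonality rather than from any naive count of $K_0$-ranks.
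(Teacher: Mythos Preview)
Your argument is correct, but it takes a genuinely different route from the paper's. For exceptionality the paper proceeds elementarily: it observes that every partial sum $-\sum_{l=j}^k\tilde A_l$ of $\Aug_i\Ts$ is either a pullback $D$, a class of the form $D+R$, or $-R$, and then shows that $H^\bullet(\CO(D))$ and $H^\bullet(\CO(D+R))$ vanish simultaneously by combining Riemann--Roch (giving $\chi(\CO(D))=\chi(\CO(D+R))$) with the short exact sequence $0\to\CO(D)\to\CO(D+R)\to\CO_R(D+R)\to 0$, whose quotient is $\CO_R(-1)$. Your approach instead packages the same vanishing via the derived push-forward identities $R\phi_*\CO=R\phi_*\CO(R)=\CO_X$ and $R\phi_*\CO(-R)=\mathcal I_p$, together with the observation that the problematic ``pullback-over-twist'' configuration only occurs with equal $E$-index. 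For fullness the paper simply cites \cite[Proposition~5.5]{hillperl08} for one direction and asserts that the argument reverses; you instead give a self-contained and symmetric argument via Orlov's semiorthogonal decomposition, which is cleaner for the converse implication. What your approach buys is a uniform treatment of both directions of fullness and a conceptual explanation of why the extra object is exactly $\CO_R$; what the paper's approach buys is that it avoids invoking the blowup formula and stays entirely at the level of cohomology of line bundles. One small remark: the decomposition $\D^b(\tilde X)=\langle\phi^*\D^b(X),\CO_R\rangle$ you use is not the form of Orlov's theorem usually stated (that would be $\langle\CO_R(-1),\phi^*\D^b(X)\rangle$), so you should make explicit that it arises by rotating with the Serre functor, since $S^{-1}\CO_R(-1)\cong\CO_R[-2]$.
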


\begin{proof}
The statement regarding exceptionality can easily reduced to the following question.
Let $X \rightarrow Y$  be a blowup of rational surfaces in a point with exceptional divisor $R$ and let $D$ denote an arbitrary divisor on $Y$ (or its pullback to $X$). The question now is whether the vanishing of all the cohomology of $\CO(D)$ is equivalent to the vanishing of all the cohomology of $\CO(D+R)$.
This can be shown by computing that $\chi(\CO(D))=\chi(\CO(D+R))$ by Riemann-Roch and using the exact sequence $0 \rightarrow \CO(D) \rightarrow \CO(D+R) \rightarrow \CO_R(D+R) \rightarrow 0$, see also \cite[Lemma 4.3]{hillperl08} for details.
In \cite[Proposition 5.5]{hillperl08}, it is also shown that the fullness of $\Ts$ implies the fullness of $\Aug_i \Ts$.
The argument there can easily be reversed.
\end{proof}

\begin{lemma}
\label{lemma:augintnum}
Let $\Ts$ be a toric system with $\tv (\Ts)=\tv(b_1,\ldots,b_n)$.
Then
\[
\tv(\Aug_i \Ts) = \tv(b_1,\ldots,b_{i-1},b_i+1,1,b_{i+1}+1,b_{i+2},\ldots,b_n).
\]
In other words, augmenting at position $i$ results in a blowup
of the associated toric variety $\tv(\Ts)$ at the same position (i.e. inserting
a new ray between the rays labeled with $i$ and $i+1$).
\end{lemma}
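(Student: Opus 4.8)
The plan is to verify the displayed chain entry-by-entry using the definition $-b_j = \chi(\CO(A_j))-2$ together with Riemann--Roch and the standard intersection theory of a point blowup. Write $\phi\colon\tX\to X$ for the blowup with exceptional divisor $R$. The only tools I would need are Riemann--Roch on a smooth surface, $\chi(\CO(D))=\chi(\CO)+\tfrac12 D.(D-K)$; the fact that $X$, and hence $\tX$, is rational, so $\chi(\CO_{\tX})=\chi(\CO_X)=1$; and the blowup identities $R^2=-1$, $R.\phi^*D=0$ for any class $D$ on $X$, and $K_{\tX}=\phi^*K_X+R$ (whence $R.K_{\tX}=-1$, and $\phi^*$ preserves intersection numbers). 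Throughout I abbreviate $\phi^*A_j$ by $A_j$, as the statement does.

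First I would dispatch the unchanged entries: for $j<i$ or $j>i+1$ one has $(\phi^*A_j)^2=A_j^2$ and $\phi^*A_j.K_{\tX}=A_j.K_X$, so $\chi(\CO(\phi^*A_j))=\chi(\CO(A_j))$ and the associated $b$-value is unaltered. The substance is the three new classes. For $A_i-R$ one computes $(A_i-R)^2=A_i^2-1$ and $(A_i-R).K_{\tX}=A_i.K_X+1$, so that $\chi(\CO(A_i-R))=\chi(\CO(A_i))-1$ and the corresponding integer is $b_i+1$; the identical computation with $A_{i+1}$ in place of $A_i$ yields $b_{i+1}+1$ for the entry $A_{i+1}-R$. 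For the entry $R$ itself, $R^2-R.K_{\tX}=-1-(-1)=0$, hence $\chi(\CO(R))=\chi(\CO_{\tX})=1$ and its $b$-value is $1$. Here the appeal to rationality is essential: it is exactly $\chi(\CO_X)=1$ that forces the middle entry to equal $1$. Assembling these computations gives precisely $\tv(\Aug_i\Ts)=\tv(b_1,\ldots,b_{i-1},b_i+1,1,b_{i+1}+1,b_{i+2},\ldots,b_n)$.

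To justify the geometric reformulation I would invoke the description of smooth complete toric surfaces recalled in Section~\ref{sec:prelim}: if $\tv(b_1,\ldots,b_n)$ has rays $\rho_1,\ldots,\rho_n$ in cyclic order with $b_j\rho_j=\rho_{j-1}+\rho_{j+1}$, then the star subdivision at the cone $\langle\rho_i,\rho_{i+1}\rangle$ (i.e. the toric blowup of the associated fixed point) inserts the ray $\rho_{\mathrm{new}}=\rho_i+\rho_{i+1}$. The relation $b_{\mathrm{new}}\rho_{\mathrm{new}}=\rho_i+\rho_{i+1}=\rho_{\mathrm{new}}$ gives $b_{\mathrm{new}}=1$, while $b_i'\rho_i=\rho_{i-1}+\rho_{\mathrm{new}}=(b_i+1)\rho_i$ and $b_{i+1}'\rho_{i+1}=\rho_{\mathrm{new}}+\rho_{i+2}=(b_{i+1}+1)\rho_{i+1}$ give $b_i'=b_i+1$ and $b_{i+1}'=b_{i+1}+1$. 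This reproduces the chain just computed, identifying $\tv(\Aug_i\Ts)$ with the blowup of $\tv(\Ts)$ at the fixed point between the rays labeled $i$ and $i+1$.

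I expect no real obstacle: the proof is essentially bookkeeping in Riemann--Roch, and the one point that repays care is the consistent use of $K_{\tX}=\phi^*K_X+R$ (which supplies both $(A_i-R).K_{\tX}=A_i.K_X+1$ and $R.K_{\tX}=-1$) together with $\chi(\CO_X)=1$.
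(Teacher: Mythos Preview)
Your proposal is correct and follows exactly the approach the paper indicates: the paper's proof consists of a single sentence pointing to the Riemann--Roch formula $\chi(\CO(D))=1+\tfrac12(D^2-K.D)$ and leaving the computation to the reader, and you have simply carried out that computation in full (together with the fan-theoretic justification of the last sentence, which the paper does not spell out).
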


\begin{proof}
This statement can be shown by straightforward computation, using the formula
$\chi(\CO(D))=1+\frac{1}{2}(D^2-K.D)$ for the Euler characteristic.
\end{proof}

On the Hirzebruch surfaces $\F_r$ we will always choose the basis of $\Pic(\F_r)$ used in Example \ref{ex:hirzedefdiv}, that is, classes $\CO(P),\CO(Q)\in\Pic(\F_r)$ with
$P$ the divisor class of the fiber of ruling on $\F_r$ and $Q$ such that
$Q^2=r$ and $P\cdot Q=1$ (so $P$ and $Q$ are the generators of the nef cone). Note that for $\F_0$, $P$ and $Q$ are interchangeable.
Hille and Perling have calculated all possible toric systems on Hirzebruch surfaces:
\begin{prop}[{\cite[Proposition 5.2]{hillperl08}}]
\label{prop:exseqFr}
All toric systems on $\F_r$ up to cyclic permutation or reflection of the indices are of the form
\begin{align*}
        \Ts_{r,i}&=\big(P, iP+Q, P, -(r+i)P+Q\big);\textrm{ and}\\
        \tilde\Ts_{r,i}&=\big(-\frac{r}{2}P+Q,P+i(-\frac{r}{2}P+Q),-\frac{r}{2}P+Q,P-i(-\frac{r}{2}P+Q)\big)\textrm{ if $r$ is even}.
\end{align*}
$\Ts_{r,i}$ is always full exceptional, and strongly exceptional if and only if $i\geq 1$. $\tilde\Ts_{r,i}$ is exceptional only if $r=0$, or if $r=2$ and $i=0$, in which cases it is also full. Finally, $$\tv(\Ts_{r,i})=\F_{|r+2i|}\qquad\textrm{and}\qquad\tv(\tilde\Ts_{r,i})=\F_{|2i|}.$$
\end{prop}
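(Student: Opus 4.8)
The plan is to treat this as a classification problem over the rank-two Picard lattice of $\F_r$, and then read off the associated toric variety and the (strong) exceptionality. Since $\rho(\F_r)=2$, a toric system on $\F_r$ is a $4$-tuple $\Ts=(A_1,A_2,A_3,A_4)$, and the conditions of Definition~\ref{def:toricsystem} reduce (indices cyclic modulo $4$) to $A_i\cdot A_{i+1}=1$ for all $i$, the two orthogonality relations $A_1\cdot A_3=A_2\cdot A_4=0$, and $\sum_i A_i=-K_{\F_r}=2Q+(2-r)P$. Writing $A_i=x_iP+y_iQ$, the intersection form is $A\cdot B=x_Ay_B+x_By_A+r\,y_Ay_B$, which is unimodular. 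I would split the proof into three parts: classify all such tuples, compute $\tv(\Ts)$, and decide exceptionality.

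The classification is the heart of the matter and the step I expect to be the main obstacle. The key observation is that $A_i\cdot A_{i+1}=1$ forces each $A_i$ to be primitive (if $A_i=d\cdot v$ with $d\geq 2$, then $d$ divides $A_i\cdot A_{i+1}=1$). In the rank-two unimodular Picard lattice the orthogonal complement of a primitive $A$ is spanned by the single primitive vector $w(A)=(x_A+ry_A)P-y_AQ$, and $w$ is an involution: $w(w(A))=A$. The relations $A_1\cdot A_3=A_2\cdot A_4=0$ with primitivity of $A_3,A_4$ then pin these down: $A_3=\pm w(A_1)$ and $A_4=\pm w(A_2)$. Since reflection of the sequence exchanges the two sign choices, I may assume $A_3=w(A_1)$. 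Imposing $\sum_iA_i=-K$ forces $A_4=-w(A_2)$ and fixes $y_2=1$ together with the linear relation $2x_1+ry_1=2$, while $A_1\cdot A_2=1$ becomes $x_2y_1=x_1-1$. Solving this small Diophantine system over $\ZZ$ splits into two branches: when $y_1=0$ one gets $A_1=P$ with $x_2$ free, producing $\Ts_{r,i}$; when $y_1\neq 0$ the relation $x_2y_1=x_1-1=-\tfrac{r}{2}y_1$ forces $r$ even and yields a cyclic rotation of $\tilde\Ts_{r,i}$. The delicate points are justifying the sign normalization via the symmetries and, especially, the bookkeeping of the overlap between the families (for small $i$ relative to $r$ some $\tilde\Ts_{r,i}$ are cyclic rotations of $\Ts$-type systems, recognizable by whether the class $P$ occurs among the entries), so that the list is both exhaustive and correctly labelled.

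To identify $\tv(\Ts)=\tv(b_1,\dots,b_4)$ I would use the defining relation $-b_i=\chi(\CO(A_i))-2$ together with Riemann--Roch $\chi(\CO(A))=1+\tfrac{1}{2}(A^2-K\cdot A)$. Substituting the two families gives $(b_1,b_2,b_3,b_4)=(0,-(r+2i),0,r+2i)$ for $\Ts_{r,i}$ and $(0,-2i,0,2i)$ for $\tilde\Ts_{r,i}$; since the fan data $(0,-m,0,m)$ describes exactly the Hirzebruch surface $\F_{|m|}$, this yields $\tv(\Ts_{r,i})=\F_{|r+2i|}$ and $\tv(\tilde\Ts_{r,i})=\F_{|2i|}$.

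Finally, for exceptionality I would apply the cohomology-vanishing criterion stated in Section~\ref{sec:toricsystems}: $\Ts$ is (strongly) exceptional precisely when $H^\bullet(\F_r,\CO(-\textstyle\sum_{i=j}^kA_i))$ vanishes (respectively $H^{\geq 1}(\F_r,\CO(\sum_{i=j}^kA_i))=0$) for all $1\leq j\leq k<4$. On $\F_r$ each of these is the cohomology of a line bundle $\CO(aP+bQ)$, which I would compute from the ruling $\F_r\to\PP^1$ by splitting $\pi_*\CO(bQ)$ into line bundles on $\PP^1$ (equivalently, by counting lattice points in the associated polytope and using Serre duality). For $\Ts_{r,i}$ every relevant partial sum has $Q$-degree $0$ or $1$, so all required groups vanish and the system is full exceptional; the strong-exceptionality threshold and the narrow range in which $\tilde\Ts_{r,i}$ remains exceptional then fall out of this finite list of computations. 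I would take particular care with the footnoted fact that strong exceptionality is sensitive to the chosen cyclic representative, and with the family overlaps noted above, so that the stated conditions are attached to the correct normal forms.
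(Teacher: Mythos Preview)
The paper does not prove this proposition; it is quoted verbatim from \cite{hillperl08} and used as a black box. Your sketch is therefore not being compared against anything in the present paper, but it is essentially the argument one finds in Hille--Perling: classify $4$-tuples in the rank-two lattice via the orthogonality constraints, then read off $\tv(\Ts)$ by Riemann--Roch and check (strong) exceptionality by direct cohomology computation on the ruled surface.

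Two small points deserve attention. First, your sign normalization is slightly incomplete: reflection swaps the pair of signs $(\epsilon_1,\epsilon_2)$ attached to $A_3=\epsilon_1 w(A_1)$ and $A_4=\epsilon_2 w(A_2)$, so it reduces $(+,-)$ and $(-,+)$ to one case, but you should also dispose of $(\pm,\pm)$ with equal signs. This is immediate from the anticanonical constraint (the $Q$-component of $A_1+A_3+A_2+A_4$ is $0$ for $(+,+)$ and forces $0=2$ for $(-,-)$), but it should be said. You should likewise verify that the remaining adjacency conditions $A_2\cdot A_3=A_3\cdot A_4=A_4\cdot A_1=1$ are automatic once your parametrization is in place; they are, but the sketch does not check them.

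Second, your cohomology argument establishes exceptionality but not \emph{fullness}. Vanishing of $H^\bullet$ for the negative partial sums gives an exceptional sequence of length $\rk K_0(\F_r)$; to conclude it generates $\D^b(\F_r)$ you need an extra input, for instance Orlov's projective-bundle theorem applied to $\F_r\to\PP^1$ (which gives one full sequence, hence all maximal exceptional sequences are full since they are related by mutations), or the explicit resolution of the diagonal. This is standard, but ``all required groups vanish and the system is full exceptional'' elides it.
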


Now let $X$ be any rational surface of Picard number $\rho\geq 2$. We call an exceptional toric system $\Ts$ on $X$ \emph{constructible} if there is some sequence of blowups $X=X^n\to\cdots\to X^0=\F_r$ such that $\Ts$ can be constructed inductively by augmenting some exceptional toric system on $\F_r$.
Note that by Lemma~\ref{lemma:augex}, a constructible toric system is automatically full.

\begin{prop}
\label{prop:toricsysgetsall}
Let $Y$ be a toric surface with the same Picard rank $\rho>2$ as a rational surface $X$.
Then there is an constructible toric system $\Ts$ on $X$ with $\tv(\Ts)=Y$.
\end{prop}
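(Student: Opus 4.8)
The plan is to exploit Lemma~\ref{lemma:augintnum}, which says that augmenting a toric system at position $i$ has precisely the effect of blowing up the associated toric variety $\tv(\Ts)$ at the corresponding position, i.e.\ of inserting a ray between the rays labelled $i$ and $i+1$. The crucial observation is that the position $i$ is an \emph{entirely free combinatorial choice}: by Definition~\ref{def:augmentation}, given \emph{any} one-point blowup $\tilde X\to X$ and \emph{any} index $1\le i\le n$ we may form $\Aug_i\Ts$. Thus the geometry of the blowups used to realize $X$ is completely decoupled from the combinatorics that governs $\tv$. My strategy is therefore to read off from $Y$ a recipe of augmentation positions, and then perform that same recipe on a blow-up tower for $X$.

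Concretely, I would first realize $Y$ as an iterated equivariant blowup $Y=Y^m\to\cdots\to Y^0=\F_{r''}$ of a Hirzebruch surface, where $m=\rho-2$; this is standard toric geometry, since any smooth complete toric surface of Picard rank $>2$ contains an invariant $(-1)$-curve which may be contracted equivariantly (see \cite{fulton:93a}). Recording between which pair of consecutive rays each step inserts a new ray yields a list of positions $i_1,\ldots,i_m$ for which augmenting the canonical toric system of $\F_{r''}$ (whose associated variety is $\F_{r''}$, with $b$-sequence $(r'',0,-r'',0)$) at $i_1,\ldots,i_m$ produces the canonical toric system of $Y$, so that the associated toric variety is exactly $Y$. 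Next I write $X$ itself as an iterated blowup $X=X^m\to\cdots\to X^0=\F_r$ (classical, as $\rho>2$ forces $X$ to be non-minimal and hence to blow down to a Hirzebruch surface at Picard rank $2$), and I take $\Ts^0=\Ts_{r,j}$ on $\F_r$, which is full exceptional by Proposition~\ref{prop:exseqFr}, choosing $j$ so that $\tv(\Ts^0)=\F_{|r+2j|}=\F_{r''}$. After a cyclic permutation I may assume the $b$-sequence of $\Ts^0$ is literally $(r'',0,-r'',0)$, matching that of $\F_{r''}$. Setting $\Ts\coloneqq\Aug_{i_m,\ldots,i_1}\Ts^0$ then gives, by definition, a constructible toric system on $X$; and since by Lemma~\ref{lemma:augintnum} the final $b$-sequence depends only on the initial $b$-sequence and the positions, $\tv(\Ts)$ has the same $b$-sequence as $Y$ and therefore equals $Y$. (Constructibility also yields fullness and exceptionality via Lemma~\ref{lemma:augex}, though only $\tv(\Ts)=Y$ is required.)

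The one point needing genuine care — and the main obstacle — is the compatibility of the two Hirzebruch bases. Since $\tv(\Ts_{r,j})=\F_{|r+2j|}$ always has the parity of $r$, and augmentation can only blow up and never down, the choice of $j$ with $\tv(\Ts^0)=\F_{r''}$ is possible only if the base $\F_{r''}$ of $Y$ can be taken of the \emph{same parity} as the base $\F_r$ of $X$. This is where I would invoke parity flexibility: a rational (respectively toric) surface of Picard rank $>2$ can be exhibited as an iterated (respectively equivariant) blowup of a Hirzebruch surface of \emph{either} parity. Indeed, if $X$ dominates $\F_r$ through a rank-three surface $X^1=\Bl_p\F_r$, then the elementary transformation at $X^1$ — contracting the strict transform of the ruling fibre through $p$ instead of the exceptional divisor — exhibits $X^1$, and hence $X$, as a blowup of $\F_{r\pm 1}$, flipping the parity; the same argument applies equivariantly to the toric surface $Y$. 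Using this, I fix a common parity for the two bases, which makes the choice of $j$ legitimate and completes the argument.
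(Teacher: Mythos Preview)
Your proposal is correct and follows essentially the same route as the paper: reduce both $X$ and $Y$ to Hirzebruch surfaces $\F_r$ and $\F_{r''}$, start from the exceptional toric system $\Ts_{r,j}$ with $j$ chosen so that $\tv(\Ts_{r,j})=\F_{r''}$, and then augment at the positions dictated by the blow-up tower of $Y$, invoking Lemma~\ref{lemma:augintnum}. The paper resolves the parity mismatch in exactly the way you do, by noting that the rank-three intermediate surface $Y^1$ admits a second blowdown to $\F_{s\pm 1}$; your remark that the same elementary-transformation trick applies to $X$ as well is correct but unnecessary, since adjusting only one side suffices.
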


\begin{proof}
For $X$ and $Y$ there is a sequence of blowups reducing to
Hirzebruch surfaces $\F_r$ and $\F_s$ respectively,
say $X= X^n \rightarrow \cdots \rightarrow X^0=\F_r$ and
$Y= Y^n \rightarrow \cdots \rightarrow Y^0=\F_s$.

Assume that $r=s \mod 2$.
Let $\Ts$ be the toric system on $X$ attained by repeatedly augmenting $\Ts_{r,(s-r)/2}$ at the same positions where we blow up
$\F_s$ to get to $Y$.
Due to
Lemma~\ref{lemma:augintnum} it follows that $\tv(\Ts)=Y$.

Suppose instead that $r\not=s \mod 2$. Note that the blowdown $Y^1 \rightarrow \F_s$ isn't unique; there is also a blowdown $Y_1 \rightarrow \F_{s'}$ for either $s'=s+1$ or $s'=s-1$. Thus, by taking instead the blowdown  $Y_1 \rightarrow \F_{s'}$ we can in fact assume that $r=s \mod 2$.
\end{proof}

\section{Degenerations of Toric Systems}\label{sec:exdef}
We now consider the behaviour of exceptional sequences and toric systems under homogeneous deformations. In what follows, all surfaces will be rational and have a $\CC^*$-action, and we will only consider homogeneous deformations. 
We first present some general results in Section \ref{sec:exseqdef}. In Section \ref{sec:mutations}, we will relate degenerations of toric systems to mutations. Finally, in Section \ref{sec:compatibility}, we introduce the notion of \emph{compatibility} to analyze when a degeneration preserves the property of being exceptional.

\subsection{General Results}
\label{sec:exseqdef}
Consider any homogeneous deformation $\pi\colon \cX \rightarrow \base$ of rational $\CC^*$-surfaces. Let $\Ts=(A_1,\ldots,A_n)$ be any $n$-tuple of line bundles on a general fiber $\cX_s$. Then we define $\bpim(\Ts)$ to be the $n$-tuple $(\bpim(A_1),\ldots,\bpim(A_n))$, where we consider $\bpim$ as a map on divisor classes. We say that $\Ts$ \emph{degenerates} to $\bpim(\Ts)$, or equivalently that $\bpim(\Ts)$ \emph{deforms} to $\Ts$.

Our first observation is that degeneration preserves toric systems:
\begin{theorem}\label{thm:tsdeg}
	Let $\pi\colon \cX \rightarrow \base$ be a homogeneous deformation of rational $\CC^\ast$-surfaces and let $\Ts$ be a toric system on a general fiber $\cX_s$. Then $\bpim(\Ts)$ is a toric system, and $\tv(\Ts)=\tv(\bpim(\Ts))$.
Moreover, let $\pi'$ be
a blowup of this deformation as in Proposition \ref{prop:blowupblowdown}. Then
\begin{equation}
	\Aug_i \bpim (\Ts) = \bpimpr( \Aug_i \Ts).\label{eqn:augdeg}
\end{equation}
In other words, augmentation commutes with degeneration.
\end{theorem}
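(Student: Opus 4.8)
The plan is to treat the two assertions separately, deriving each from the structural properties of the degeneration maps established earlier. The claim that $\bpim(\Ts)$ is again a toric system with $\tv(\bpim(\Ts))=\tv(\Ts)$ should follow almost formally from Proposition \ref{prop:pim:preserves}, which tells us that $\bpim$ is additive and preserves intersection numbers, canonical classes, and Euler characteristics. Writing $\Ts=(A_1,\ldots,A_n)$, conditions (1) and (2) of Definition \ref{def:toricsystem} transfer because $\bpim(A_i).\bpim(A_j)=A_i.A_j$, and condition (3) transfers because $\sum_i\bpim(A_i)=\bpim(\sum_i A_i)=\bpim(-K_{\cX_s})=-K_{\cX_0}$. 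Finally, since the data $-b_i=\chi(\CO(A_i))-2$ defining $\tv(\Ts)$ is a linear-equivalence invariant preserved by $\bpim$, the two associated toric surfaces have identical self-intersection sequences and therefore coincide.

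For the augmentation identity \eqref{eqn:augdeg}, my approach is to expand both sides as explicit $(n+1)$-tuples on the blown-up special fiber $\cX_0'$ and compare them entry by entry. Let $R$ be the exceptional divisor of the blowup $\phi_s\colon\cX_s'\to\cX_s$ of the general fiber, and let $R_0$ be the exceptional divisor of the corresponding blowup $\phi_0\colon\cX_0'\to\cX_0$ of the special fiber supplied by Proposition \ref{prop:blowupblowdown}. Suppressing pullbacks as in the paper's convention, the left-hand side $\Aug_i\bpim(\Ts)$ has entries $\bpim(A_i)-R_0$, $R_0$, $\bpim(A_{i+1})-R_0$ in positions $i,i+1,i+2$ and entries $\bpim(A_j)$ elsewhere. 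Applying $\bpimpr$ to $\Aug_i\Ts$ and using linearity, the right-hand side $\bpimpr(\Aug_i\Ts)$ has entries $\bpimpr(A_i)-\bpimpr(R)$, $\bpimpr(R)$, $\bpimpr(A_{i+1})-\bpimpr(R)$ in those positions and $\bpimpr(A_j)$ elsewhere. Hence the two tuples agree provided $\bpimpr(A_j)=\bpim(A_j)$ for every class pulled back from $\cX_s$, and $\bpimpr(R)=R_0$.

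These two facts are exactly the content of Proposition \ref{prop:blowuppicommutes}, applied to the pair consisting of $\pi$ and its blowup $\pi'$: the commuting square gives $\bpimpr\circ\phi_s^*=\phi_0^*\circ\bpim$, that is, $\bpimpr(A_j)=\bpim(A_j)$ after suppressing pullbacks, while the identity $\pim(E_s)=E_0$ becomes $\bpimpr(R)=R_0$ in the present notation. I expect the only genuine subtlety to be bookkeeping: Proposition \ref{prop:blowuppicommutes} is phrased for a blowdown $\cX_0\to\cX_0'$, so one must carefully match its notation to our blowup situation—swapping the roles of primed and unprimed fibers, and of $\bpim$ with $\bpimpr$—and verify that the abuse of notation identifying a line bundle with its pullback is applied consistently on both sides of \eqref{eqn:augdeg}. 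Once this dictionary is fixed, the entry-by-entry comparison is immediate.
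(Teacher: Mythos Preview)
Your proposal is correct and follows essentially the same approach as the paper: the first assertion is derived from Proposition~\ref{prop:pim:preserves} and the augmentation identity from Proposition~\ref{prop:blowuppicommutes}. The paper's proof is terser, simply citing these two propositions, while you have usefully unpacked the entry-by-entry comparison and the prime-swapping bookkeeping that makes the citations work.
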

\begin{proof}
That $\bpim(\Ts)$ is again a toric system with $\tv(\bpim(\Ts)) = \tv(\Ts)$ is an immediate consequence of Proposition~\ref{prop:pim:preserves}.
Equation \eqref{eqn:augdeg} follows directly from Proposition ~\ref{prop:blowuppicommutes}.
\end{proof}

On the other hand, we can make a much stronger statement concerning the behavior or toric systems under deformation:
\begin{theorem}\label{thm:tsdef}
	Let $\pi\colon \cX \rightarrow \base$ be a homogeneous deformation of rational $\CC^\ast$-surfaces and let $\Ts$ be a toric system on the special fiber $\cX_0$. Then $\inv\bpim(\Ts)$ is a toric system on any general fiber $\cX_s$, and $\tv(\Ts)=\tv( \inv\bpim(\Ts))$. Furthermore, if $\Ts$ is exceptional/constructible/strongly exceptional, then so is $\inv\bpim(\Ts)$.
\end{theorem}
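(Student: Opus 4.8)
The plan is to treat the three families of claims separately: the toric-system and $\tv$ statements reduce directly to Proposition~\ref{prop:pim:preserves}, the exceptionality statements follow from the \emph{direction} in which semicontinuity acts, and constructibility is handled by induction on the Picard number. For the first, I would record that $\bpim$ is an isomorphism of Picard groups preserving intersection numbers, canonical classes and Euler characteristics (Proposition~\ref{prop:pim:preserves}); hence $\inv{\bpim}$ enjoys all of these properties as well. Since the three conditions of Definition~\ref{def:toricsystem} are expressed purely through intersection numbers and the relation $\sum A_i=-K$, the tuple $\inv{\bpim}(\Ts)$ is again a toric system, exactly as in the proof of Theorem~\ref{thm:tsdeg}. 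As the integers $b_i=2-\chi(\CO(A_i))$ defining $\tv(\Ts)$ depend only on Euler characteristics, which are preserved, we obtain $\tv(\Ts)=\tv(\inv{\bpim}(\Ts))$.

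The essential point for exceptionality is that the deformation runs from the special fiber $\cX_0$ (carrying $\Ts$) to the general fiber $\cX_s$ (carrying $\inv{\bpim}(\Ts)$), the opposite direction to Theorem~\ref{thm:tsdeg}. By the cohomological characterization of exceptional toric systems and the additivity of $\inv{\bpim}$, the system $\inv{\bpim}(\Ts)$ is exceptional precisely when all groups $H^l(\cX_s,\inv{\bpim}(\CO(\sum_{i=j}^k -A_i)))$ vanish. The upper-semicontinuity of cohomology under $\pim$ (Proposition~\ref{prop:pim:preserves}) gives
\[
h^l\Big(\cX_0,\CO\big(\textstyle\sum_{i=j}^k -A_i\big)\Big)\ \geq\ h^l\Big(\cX_s,\inv{\bpim}\big(\CO(\textstyle\sum_{i=j}^k -A_i)\big)\Big),
\]
so the vanishing on $\cX_0$ expressing the exceptionality of $\Ts$ forces the vanishing on $\cX_s$. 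The strongly exceptional case is identical, applied in addition to the groups $H^{l+1}(\cX_s,\inv{\bpim}(\CO(\sum_{i=j}^k A_i)))$. I would stress that this is exactly the favourable direction of semicontinuity, in contrast with the degeneration direction of Theorem~\ref{thm:tsdeg}, where exceptionality may genuinely be lost (cf.\ Remark~\ref{rem:hirz0}).

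For constructibility I would induct on the Picard number, the crux being that augmentation commutes with deformation. Letting $\pi'$ be the blowdown of $\pi$ provided by Proposition~\ref{prop:blowupblowdown}, inverting the identity~\eqref{eqn:augdeg} of Theorem~\ref{thm:tsdeg} yields, for a toric system $\Ts'$ on the special fiber of $\pi'$,
\[
\inv{\bpim}(\Aug_i \Ts')=\Aug_i\,\inv{\bpimpr}(\Ts').
\]
In the base case $\cX_0$ is a Hirzebruch surface, so $\cX_s$ is a rational $\CC^*$-surface of Picard rank two, again a Hirzebruch surface, and there constructibility coincides with exceptionality, already handled above. For the inductive step I would write the constructible system $\Ts$ as $\Aug_i\Ts'$ with $\Ts'$ constructible on an invariant blowdown $\cX_0'$ of strictly smaller Picard rank, apply the inductive hypothesis along $\pi'$ to see that $\inv{\bpimpr}(\Ts')$ is constructible on $\cX_s'$, and then read off from the displayed identity that $\inv{\bpim}(\Ts)=\Aug_i\inv{\bpimpr}(\Ts')$ is an augmentation of a constructible system, hence constructible.

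The main obstacle I anticipate lies in this last step: deriving the commutation identity cleanly from~\eqref{eqn:augdeg} demands careful bookkeeping of which family is a blowup of which, together with the matching of exceptional divisors under $\pim$ (Proposition~\ref{prop:blowuppicommutes}), so that the augmentation index and the blown-up curve correspond consistently on both fibers. Once this combinatorial compatibility is in place, the toric-system and cohomological parts are essentially immediate, the only genuine subtlety being to identify the correct, favourable direction of semicontinuity.
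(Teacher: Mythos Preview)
Your proof is correct and follows essentially the same approach as the paper's: reducing the toric-system and $\tv$ claims to Proposition~\ref{prop:pim:preserves}, deducing (strong) exceptionality from the favourable direction of upper-semicontinuity, and handling constructibility by induction on the Picard number via the commutation of augmentation with $\inv{\bpim}$ obtained from Theorem~\ref{thm:tsdeg}. Your write-up is in fact more explicit than the paper's in articulating why semicontinuity points the right way here and in isolating the base case, and your closing caveat about matching exceptional divisors via Proposition~\ref{prop:blowuppicommutes} is well taken.
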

\begin{proof}
	The first two statements are shown exactly as in the proof of Theorem \ref{thm:tsdeg}. From Proposition \ref{prop:pim:preserves} we have that $\inv\bpim$  preserves the vanishing of cohomology, which implies that $\inv\bpim$ preserves (strong) exceptionality. Finally, if $\Ts$ is constructible, we can blow down $\cX_0$ to some $\cX_0'$ such that $\Ts=\Aug_i \Ts'$ for some constructible toric system $\Ts'$ on $\cX_0'$. We then blown down $\pi$ to $\pi'$ as in Proposition \ref{prop:blowupblowdown}, which induces a blowdown $\cX_s\to \cX_s'$. If  $\inv\bpimpr(\Ts')$ is constructible, then $\inv\bpim(\Ts)$ is as well, since by Theorem \ref{thm:tsdeg} we have that $$\inv\bpim(\Ts)=\Aug_i\inv\bpimpr(\Ts')$$
	with respect to the blowup $\cX_s\to \cX_s'$. The statement then follows by induction on the Picard number of $\cX_0$.
\end{proof}

Combining the above two theorems with Theorem \ref{thm:defcon} provides us with a proof of our Main Theorem \ref{mainthm:1} from the introduction.

\subsection{Mutations and Hirzebruch Surfaces}
\label{sec:mutations}
We now turn our attention to Hirzebruch surfaces, where we have some more explicit results. The first is the following proposition:
\begin{prop}\label{prop:hirzebruchdegen}
	Consider some homogeneous deformation $\pi\colon \cX \rightarrow \base$ with special fiber $\cX_0 = \F_{r+2\alpha}$ and general fiber $\cX_s = \F_r$ for $r>0$. Then for all $i$,
	$$\bpim(\Ts_{r,i})=\Ts_{r+2\alpha,i-\alpha}$$
	and if $r$ is even,
	$$\bpim(\tilde\Ts_{r,i})=\tilde\Ts_{r+2\alpha,i}.$$
In particular, exceptional toric systems degenerate to exceptional toric systems.
\end{prop}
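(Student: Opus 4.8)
The plan is to exploit that $\bpim$ is the group isomorphism $\Pic(\F_r)\to\Pic(\F_{r+2\alpha})$ furnished by Proposition~\ref{prop:pim:preserves}. Being induced by a map on divisor classes, it commutes with integer linear combinations, and since a toric system is merely an ordered tuple of divisor classes, $\bpim$ acts on $\Ts_{r,i}$ and $\tilde\Ts_{r,i}$ entry by entry. The whole statement therefore reduces to knowing $\bpim$ on the basis $\{P,Q\}$ of $\Pic(\F_r)$ and reading off the effect on each entry.

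First I would note that, since $r>0$ and the special fiber is $\F_{r+2\alpha}$, the deformation $\pi$ must be the one attached to the essentially unique degeneration diagram $(\M(r,\alpha),G)$ analyzed in Example~\ref{ex:hirzedefdiv}. From there I may import the two identities $\bpim(P)=P$ and $\bpim(Q)=Q-\alpha P$, namely equations \eqref{eqn:P} and \eqref{eqn:Q}, where on the right $P$ and $Q$ denote the corresponding classes on $\F_{r+2\alpha}$.

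The two displayed formulas are then a direct computation. Applying $\bpim$ entrywise to $\Ts_{r,i}=(P,\,iP+Q,\,P,\,-(r+i)P+Q)$ sends the entry $iP+Q$ to $(i-\alpha)P+Q$ and the entry $-(r+i)P+Q$ to $-(r+i+\alpha)P+Q$, which is precisely $\Ts_{r+2\alpha,\,i-\alpha}$. For $r$ even, writing $N=-\tfrac r2P+Q$ one checks $\bpim(N)=-\tfrac{r+2\alpha}{2}P+Q$; since $\bpim$ fixes $P$, it commutes with the expressions $P\pm iN$, and this yields $\bpim(\tilde\Ts_{r,i})=\tilde\Ts_{r+2\alpha,\,i}$ at once. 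The ``in particular'' then follows from the classification in Proposition~\ref{prop:exseqFr}: every $\Ts_{r,i}$ and every $\Ts_{r+2\alpha,i-\alpha}$ is full exceptional, so exceptional systems in this family degenerate to exceptional ones.

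Since the argument is in essence bookkeeping, the only thing that really needs care is the abuse of notation by which $P$ and $Q$ name classes on both $\F_r$ and $\F_{r+2\alpha}$, together with the verification that $\pi$ is indeed the deformation of Example~\ref{ex:hirzedefdiv} so that \eqref{eqn:P} and \eqref{eqn:Q} are available. The one mildly delicate point will be the final sentence in the case of the $\tilde\Ts$ family: to conclude exceptionality of the images one should keep in mind that the exceptional $\tilde\Ts$ cases coincide, up to cyclic permutation, with members of the always-exceptional family $\Ts_{r,i}$, so that Proposition~\ref{prop:exseqFr} applies to them as well.
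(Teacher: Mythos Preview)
Your proposal is correct and follows essentially the same approach as the paper: invoke Example~\ref{ex:hirzedefdiv} to obtain $\bpim(P)=P$ and $\bpim(Q)=Q-\alpha P$, then compute entrywise. The paper's own proof is in fact even terser, simply recording the map on $P,Q$ and declaring that ``the proposition then follows from direct calculation''; your version spells out the computation and is more careful about the ``in particular'' clause, correctly noting that the only exceptional $\tilde\Ts$ case for $r>0$ (namely $\tilde\Ts_{2,0}$) coincides up to cyclic permutation with a member of the $\Ts$ family.
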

\begin{proof}
We recorded all possible deformations in Example \ref{ex:hirzedefdiv}. As noted there, we have:
\[
\begin{array}{rcl}
	\bpim\colon \Pic(\F_r) &\rightarrow &\Pic(\F_{r+2\alpha})\\
\CO(P) & \mapsto & \CO(P)\\
\CO(Q) & \mapsto & \CO(Q-\alpha P)
\end{array}
\]
The proposition then follows from direct calculation.
\end{proof}

\begin{rem}\label{rem:hirz0}
	It might seem odd that in the above theorem, we must rule out the case $\cX_s=\F_0$. This is due to the interchangeable roles of $\CO(P)$ and $\CO(Q)$ in the basis of $\Pic(\F_0)$. In this case, either  $\bpim(\Ts_{0,i})=\Ts_{2\alpha,i-\alpha}$ as above or $\bpim(\Ts_{0,i})=\tilde\Ts_{2\alpha,i-\alpha}$, and either  $\bpim(\tilde\Ts_{0,i})=\tilde\Ts_{2\alpha,i}$ as above or $\bpim(\tilde\Ts_{0,i})=\Ts_{2\alpha,i-\alpha}$. In particular, the exceptional toric system $\Ts_{0,i}$ on $\F_0$ can be degenerated to $\tilde \Ts_{2\alpha, i-\alpha}$, which is not exceptional if $\alpha>1$. Thus, in general, the property of being exceptional is not preserved under degeneration.  We further discuss this in Section \ref{sec:compatibility}.
\end{rem}

We can further explain the above situation on Hirzebruch surfaces in terms of so-called \emph{mutations}. We first recall their definition from \cite{rudakov}:

\begin{definition}\label{def:mutation}
Let $(E,F)$ be a (not necessarily full) exceptional sequence of two arbitrary objects in $\D^b(X)$.
The \emph{left mutation} $L_F E$ of $F$ by $E$ is an object of $\D^b(X)$ that fits into the triangle
\[
L_E F \rightarrow \bigoplus_l \Hom(E,F[l]) \otimes E[-l] \overset{can}{\rightarrow} F \rightarrow L_E F[1].
\]
Similarly, we define the \emph{right mutation} $R_F E$ of $F$ by $E$ as the object that fits into the triangle
\[
E \overset{can^\ast}{\rightarrow} \bigoplus_l \Hom(E,F[l])^\ast[l] \otimes F  \rightarrow R_F E \rightarrow E[1].
\]
For an exceptional sequence $\Ex=(E_1,\ldots,E_n)$ of arbitrary objects we define the left mutation of $\Ex$ at
position $i$ as
\[
L_i \Ex=(E_1,\ldots,E_{i-1},L_{E_i} E_{i+1}, E_i, E_{i+2},\ldots,E_n),
\]
and analogously the right mutation of $\Ex$ at position $i$ is
\[
R_i \Ex=(E_1,\ldots,E_{i-1},E_{i+1}, R_{E_{i+1}} E_i, E_{i+2},\ldots,E_n).
\]
\end{definition}

\begin{rem}
In \cite{rudakov} it is shown that the left and right mutations of a (full) exceptional sequence are again (full) exceptional, and that the right and left mutation are inverses of each other.
\end{rem}
On Hirzebruch surfaces, it is possible to mutate an exceptional sequence of line bundles such
that the mutation  still consists of line bundles. 
According to the Proposition~\ref{prop:exseqFr} the exceptional sequences on $\F_r$, $r>0$ correspond to toric systems of the form $\Ts_{r,i}$ up to cyclic permutation or reflection of indices. Now let $\Ex$ be an exceptional sequence with toric system $\Ts_{r,i}$.
 Consider a mutation of $\Ex$ at the first position. To calculate this, we must look at
\[
\bigoplus_l \Hom(\CO,\CO(P)[l]) \otimes \CO[-l] \overset{can}{\rightarrow}\CO(P).
\]
Since
\[
\hom(\CO,\CO(P)[l])=h^l(X,\CO(P))=
\begin{cases}
2 & l=0\\
0 & l\not=0
\end{cases}
\]
and the map $can$ is surjective, $L_\CO \CO(P)$ is just the ordinary kernel of this map.
In fact, $L_\CO \CO(P)=\CO(-P)$, so
\[
L_1 \Ex = (\CO(-P),\CO, \CO((i+1)P+Q), \CO((i+2)P+Q)).
\]
Thus, on the level of toric systems, the left mutation of the toric system at the first position is
\[
L_1 \Ts_{r,i} = \Ts_{r,i+1}. \]
Since the first element of the mutated toric system
is again $P$, we can iterate this process. Hence, we denote
by $L_1^\alpha \Ts$ the result of left mutating $\Ts$ $\alpha$-times.
Note that we can extend this notion also to $\alpha \in \ZZ$.
Combining this with the previous proposition gives us for any deformation $\pi$ from $\F_{r+2\alpha}$ to $\F_r$
$$
\bpim(L_1^\alpha \Ts_{r,i})=\Ts_{r+2\alpha,i}.
$$
In particular, the $\alpha$-fold left mutation of the canonical toric system on $\F_r$ degenerates to the canonical toric system on $\F_{r+2\alpha}$. 
Likewise, changing to the viewpoint of deformation, we have
$$
\inv\bpim(L_1^{-\alpha} \Ts_{r+2\alpha,i})=\Ts_{r,i}.
$$
Although we originally ruled out the case that $r=0$, note that this isn't really necessary. We just need to choose the basis of $\CO(P),\CO(Q)\in\Pic(\F_0)$ such that $\bpim(\CO(P))=\CO(P)$.

We can extend the above discussion on Hirzebruch surfaces to general rational $\CC^*$-surfaces as follows: 
\begin{theorem}
Consider a homogeneous deformation or degeneration of rational $\CC^\ast$-surfaces from $X$ to a toric surface $Y$, which blows down to a deformation respectively degeneration from $\F_r$ to $\F_{r+2\alpha}$ for $\alpha\in \ZZ$. Consider the augmentation with respect to this blowdown $\Ts_Y=\Aug_{i_n,\ldots,i_1} \Ts_{r+2\alpha,0}$ such that $\Ts_Y$ is the canonical toric 
system on $Y$.\footnote{If either $r=0$ or $r+2\alpha=0$, we must choose the basis $P,Q$ of $\Pic(\F_0)$ as above.} Then the toric system 
\[
\Ts = \Aug_{i_n,\ldots,i_1} L_1^\alpha \Ts_{r,0}
\]
on $X$ deforms respectively degenerates to $\Ts_Y$. 
\end{theorem}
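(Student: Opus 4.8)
The plan is to argue by induction on the number $n$ of blowups, taking the Hirzebruch surfaces as the base case and using the commutation of augmentation with degeneration from Theorem~\ref{thm:tsdeg} to pass from one blowup to the next. The key simplification is that, in both the deformation and the degeneration case, the assertion ``$\Ts$ (de)forms to $\Ts_Y$'' says exactly that the canonical Picard isomorphism $\bpim$ of the relevant family carries the toric system on the general fiber to the toric system on the special fiber. The two cases therefore run in parallel; they differ only in whether the canonical system $\Ts_Y=\Aug_{i_n,\dots,i_1}\Ts_{r+2\alpha,0}$ lives on the general fiber (deformation case, where $Y$ is general) or on the special fiber (degeneration case, where $Y$ is special).

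For the base case I take the blown-down family relating $\F_r$ and $\F_{r+2\alpha}$, where the claim is precisely the computation recorded just before the theorem. In the degeneration case, with $\F_r$ the general fiber, Proposition~\ref{prop:hirzebruchdegen} combined with $L_1\Ts_{r,i}=\Ts_{r,i+1}$ gives $\bpim(L_1^\alpha\Ts_{r,0})=\Ts_{r+2\alpha,0}$; in the deformation case, with $\F_{r+2\alpha}$ the general fiber, the same proposition applied with $(r+2\alpha,-\alpha)$ in place of $(r,\alpha)$ gives $\bpim(\Ts_{r+2\alpha,0})=L_1^\alpha\Ts_{r,0}$. Either way $L_1^\alpha\Ts_{r,0}$ and the canonical system $\Ts_{r+2\alpha,0}$ on $\F_{r+2\alpha}$ correspond under $\bpim$. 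In the boundary cases $r=0$ or $r+2\alpha=0$ I first fix the basis of $\Pic(\F_0)$ so that $\bpim(\CO(P))=\CO(P)$, as in Remark~\ref{rem:hirz0}; with this normalization the formulas hold as stated.

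For the inductive step I factor $\pi$ through the blowdown chain $\pi=\pi_n,\dots,\pi_0$ provided by Proposition~\ref{prop:blowupblowdown}, with $\pi_0$ the Hirzebruch family and each $\pi_k$ a blowup of $\pi_{k-1}$. On the toric fiber the crucial observation is that augmenting the canonical toric system of a smooth toric surface at position $i$ yields the canonical toric system of the blowup at the corresponding fixed point (compare Lemma~\ref{lemma:augintnum}), so the canonical systems of the intermediate toric surfaces are exactly $\Aug_{i_k,\dots,i_1}\Ts_{r+2\alpha,0}$, compatibly with the hypothesis on $\Ts_Y$. I then lift the base-case identity along the chain using Equation~\eqref{eqn:augdeg}: if at level $k-1$ the map $\bpim$ identifies $\Aug_{i_{k-1},\dots,i_1}L_1^\alpha\Ts_{r,0}$ with $\Aug_{i_{k-1},\dots,i_1}\Ts_{r+2\alpha,0}$, then applying $\Aug_{i_k}$ and invoking \eqref{eqn:augdeg} for the blowup from $\pi_{k-1}$ to $\pi_k$ gives the same identification at level $k$, since augmentation at the fixed index $i_k$ commutes with the Picard isomorphism of the family. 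At level $n$ this reads $\bpim(\Ts)=\Ts_Y$ in the degeneration case and $\bpim(\Ts_Y)=\Ts$ in the deformation case, which is the desired conclusion.

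The step I expect to be the main obstacle is the bookkeeping of the indices: one must check that the single sequence $i_1,\dots,i_n$ read off from the canonical toric systems on the $Y$-side also governs the augmentations on the $X$-side. This is exactly the force of Equation~\eqref{eqn:augdeg}, which guarantees that one blowup of the total family augments the toric systems of both fibers at the same index, irrespective of which fiber happens to be toric. The residual care is to confirm that the blowdown $Y\to\F_{r+2\alpha}$ and its companion $X\to\F_r$ are the two fiberwise halves of a single blowdown of $\pi$ in the sense of Proposition~\ref{prop:blowupblowdown}, so that a common index sequence indeed controls both sides; once this is secured, the induction is purely formal.
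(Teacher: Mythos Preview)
Your proposal is correct and follows exactly the approach of the paper: the paper's proof is the single sentence ``Combine the case for Hirzebruch surfaces discussed above with Equation~\eqref{eqn:augdeg} from Theorem~\ref{thm:tsdeg},'' which is precisely your induction with the Hirzebruch base case (Proposition~\ref{prop:hirzebruchdegen} together with $L_1^\alpha\Ts_{r,0}=\Ts_{r,\alpha}$) and the inductive step via commutation of augmentation and $\bpim$. Your additional care about the $r=0$ or $r+2\alpha=0$ boundary and about the shared index sequence on both fibers is well placed and consistent with the paper's conventions, but does not add anything beyond what the paper leaves implicit.
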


\begin{proof}
	Combine the case for Hirzebruch surfaces discussed above with Equation \eqref{eqn:augdeg} from Theorem \ref{thm:tsdeg}.
\end{proof}

\subsection{Compatibility}
\label{sec:compatibility}
Although one might hope that homogeneous degenerations preserve exceptional toric systems, we have seen in Remark \ref{rem:hirz0} that this is in general not the case.
Likewise, the property of being strongly exceptional is also not preserved.
Indeed, consider the strongly exceptional toric system $\Ts_{r,i}$ on $\F_r$, where $i\geq 1$. As we saw above, this can be degenerated to $\Ts_{r+2\alpha,i-\alpha}$ on $\F_{r+2\alpha}$, which is no longer strongly exceptional if $i<\alpha+1$.

However, the situation isn't hopeless---for any degeneration, we can identify a subset of exceptional toric systems which degenerate to exceptional toric systems:
\begin{definition}
\label{def:compatible}
	Let $\pi \colon \cX \rightarrow \base$ be a homogeneous deformation of $\CC^*$-surfaces, and let $\Ts$ be a constructible toric system on a general fiber $\cX_s$. We say that $\Ts$ is \emph{compatible} with $\pi$ if:
	\begin{enumerate}
		\item $\cX_s$ is a Hirzebruch surface and $\bpim(\Ts)$ is exceptional; or
		\item There is a blowdown of $\pi$ to $\pi'$ inducing a blowdown $\cX_s\to \cX_s'$ such that $\Ts$ is an augmentation of a toric system $\Ts'$ on $\cX_s'$ compatible with $\pi'$.
	\end{enumerate}
\end{definition}

Proposition \ref{prop:hirzebruchdegen} and Remark \ref{rem:hirz0} thus give us an explicit description of the toric systems compatible with any deformation of Hirzebruch surfaces. The second condition above then can be applied inductively to determine all toric systems compatible with a given deformation. The importance of compatibility is made clear by our Main Theorem \ref{mainthm:2}, which we restate here:
\begin{theorem}
\label{thm:compat:constr}
	Let $\pi$ be a homogeneous deformation of rational $\CC^*$-surfaces with general fiber $\cX_s$ and let $\Ts$ be a constructible toric system on $\cX_s$. Then $\Ts$ is compatible with $\pi$ if and only if $\bpim( \Ts)$ is a constructible toric system. In particular, if $\Ts$ is compatible with $\pi$, then $\bpim (\Ts)$ is a full exceptional toric system.
\end{theorem}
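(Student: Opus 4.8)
The plan is to prove the two implications separately, both resting on the case of a Hirzebruch surface. First note that when the general fiber $\cX_s$ has Picard number two it is a Hirzebruch surface, and since $\bpim$ is an isomorphism of Picard groups the special fiber also has Picard number two, hence is Hirzebruch as well. In this situation compatibility is just condition~(1) of Definition~\ref{def:compatible}, that $\bpim(\Ts)$ be exceptional, whereas on a Hirzebruch surface a toric system is constructible exactly when it is exceptional. So both sides of the asserted equivalence reduce to ``$\bpim(\Ts)$ is exceptional,'' and the claim holds in Picard number two.

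For the forward implication I would induct on the Picard number $\rho$ of $\cX_s$, the case $\rho=2$ being the Hirzebruch case above. Assume $\rho>2$ and that $\Ts$ is compatible with $\pi$. By condition~(2) there is a blowdown of $\pi$ to some $\pi'$, inducing a blowdown $\cX_s\to\cX_s'$, together with a decomposition $\Ts=\Aug_i\Ts'$ in which $\Ts'$ is compatible with $\pi'$; in particular $\Ts'$ is constructible, since compatibility is only defined for constructible systems. As $\cX_s'$ has smaller Picard number, the induction hypothesis gives that $\bpimpr(\Ts')$ is constructible. Because augmentation commutes with degeneration (Theorem~\ref{thm:tsdeg}), we have $\bpim(\Ts)=\Aug_i\bpimpr(\Ts')$ with respect to the blowup $\cX_0\to\cX_0'$ arising from the blowdown of $\pi$, and an augmentation of a constructible toric system is again constructible. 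Hence $\bpim(\Ts)$ is constructible, and the ``In particular'' assertion is immediate, since constructible toric systems are automatically full exceptional.

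The backward implication is the crux, and the main obstacle is that constructibility is not visibly preserved under a single un-augmentation: Lemma~\ref{lemma:augex} only ensures that removing an augmentation preserves \emph{full} exceptionality, which is strictly weaker than constructibility, so a naive one-step induction does not close. I would circumvent this by collapsing the entire augmentation tower at once, down to the Hirzebruch base where full exceptionality and constructibility coincide. Fix a blowdown sequence $\cX_s=X^{(m)}\to\cdots\to X^{(0)}=\F_r$ exhibiting $\Ts=\Aug_{i_m}\cdots\Aug_{i_1}\Ts_0$ for an exceptional toric system $\Ts_0$ on $\F_r$, and blow down $\pi$ along it using Proposition~\ref{prop:blowupblowdown}, obtaining a homogeneous deformation $\pi^{(0)}$ of Hirzebruch surfaces whose special fiber is again Hirzebruch; write $\bar\pi_0^\circ$ for its induced Picard isomorphism. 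Iterating Theorem~\ref{thm:tsdeg} gives $\bpim(\Ts)=\Aug_{i_m}\cdots\Aug_{i_1}\bar\pi_0^\circ(\Ts_0)$. Now assuming $\bpim(\Ts)$ is constructible, it is full exceptional, so $m$ applications of Lemma~\ref{lemma:augex} show $\bar\pi_0^\circ(\Ts_0)$ is full exceptional, hence exceptional as it lives on a Hirzebruch surface. Thus $\Ts_0$ is compatible with $\pi^{(0)}$ by condition~(1); writing $\Ts^{(k)}=\Aug_{i_k}\cdots\Aug_{i_1}\Ts_0$, condition~(2) applied at each blowdown $\pi^{(k)}\to\pi^{(k-1)}$ then propagates compatibility upward from $\Ts^{(0)}=\Ts_0$ to $\Ts^{(m)}=\Ts$, so $\Ts$ is compatible with $\pi$, completing the proof.
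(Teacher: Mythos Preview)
Your forward implication is fine and matches the paper's argument essentially verbatim: induction on the Picard number, with the induction step given by the commutation of augmentation with degeneration (Theorem~\ref{thm:tsdeg}).

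The backward implication, however, has a genuine gap. You fix a blowdown tower $\cX_s=X^{(m)}\to\cdots\to X^{(0)}=\F_r$ coming from the constructibility of $\Ts$ on the \emph{general} fiber, and then write ``blow down $\pi$ along it using Proposition~\ref{prop:blowupblowdown}''. But that proposition does not supply this operation. Part~(\ref{prop:blowupblowdown1}) lets you blow down the degeneration starting from a blowdown of the \emph{special} fiber $\cX_0$, and part~(\ref{prop:blowupblowdown3}) lets you blow \emph{up} starting from a blowup of the general fiber; there is no clause allowing you to blow $\pi$ down along a prescribed blowdown of $\cX_s$. This is not an oversight in the proposition: the Remark immediately following the present theorem exhibits degenerations $\pi$ and $(-1)$-curves $C\subset\cX_s$ (those whose vertex in the degeneration diagram has valency at least two) such that $\pi$ \emph{cannot} be blown down to have general fiber $\cX_s/C$. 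So the tower you pick may simply fail to descend through $\pi$, and your chain of $\pi^{(k)}$ need not exist. Since your use of the hypothesis ``$\bpim(\Ts)$ constructible'' only enters after this step, the argument breaks before the hypothesis can help.

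The paper's proof avoids this by running the induction on the other side: it uses the constructibility of $\bpim(\Ts)$ on $\cX_0$ to produce a blowdown $\cX_0\to\cX_0'$ with $\bpim(\Ts)=\Aug_i\Ts_{\cX_0'}$ for a constructible $\Ts_{\cX_0'}$. By Proposition~\ref{prop:blowupblowdown}(\ref{prop:blowupblowdown1}) this blowdown of the special fiber \emph{does} lift uniquely to a blowdown $\pi\to\pi'$, which in turn induces a blowdown $\cX_s\to\cX_s'$. Setting $\Ts'\coloneqq(\bpimpr)^{-1}(\Ts_{\cX_0'})$, Theorem~\ref{thm:tsdef} guarantees that $\Ts'$ is constructible, and Equation~\eqref{eqn:augdeg} gives $\Ts=\Aug_i\Ts'$. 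Induction then shows $\Ts'$ is compatible with $\pi'$, whence $\Ts$ is compatible with $\pi$ by condition~(2). The point is that the asymmetry in Proposition~\ref{prop:blowupblowdown} forces you to drive the induction from the special fiber, which is exactly where the hypothesis lives.
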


\begin{proof}
	We first prove that if $\Ts$ is compatible with $\pi$, then $\bpim(\Ts)$ is constructible; this is done by induction on the Picard number $\rho$ of $\cX_0$. The case $\rho=2$ follows directly from the definition of compatibility. On the other hand, the induction step follows from Equation \eqref{eqn:augdeg} and Lemma \ref{lemma:augex}.

	Now suppose that $\Ts$ isn't compatible with $\pi$, but $\Ts_{\cX_0}\coloneqq\bpim(\Ts)$ is constructible. Then there is a blowdown $\cX_0\to \cX_0'$ and a constructible toric system $\Ts_{\cX_0'}$ on $\cX_0'$ such that $\Ts_{\cX_0}$ is an augmentation of $\Ts_{\cX_0'}$ with respect to this blowup. The blowdown $\cX_0\to \cX_0'$ induces a unique blowdown of $\pi$ to some $\pi'$ with special fiber $\cX_0'$ and some general fiber $\cX_s'$. If we set $\Ts'\coloneqq\inv\bpimpr(\Ts_{\cX_0'})$, then by Equation \eqref{eqn:augdeg} we can conclude that $\Ts$ is an augmentation of $\Ts'$, and that if $\Ts'$ is compatible with $\pi'$, then $\Ts$ must be compatible with $\pi$. Applying induction, we arrive at a contradiction, and can thus conclude that $\Ts_{\cX_0}$ must not have been constructible.
\end{proof}

\begin{rem} 
Let $\pi \colon \cX \to \base$ be a homogeneous deformation of rational $\CC^\ast$-surfaces.
Assume that all exceptional toric systems on the special fiber $\cX_{0}$ are constructible.
It is an immediate consequence of the Theorems \ref{thm:tsdef} and \ref{thm:compat:constr},
that a toric system $\Ts$ on $\cX_0$ is compatible with $\pi$ if and only if $\bpim(\Ts)$ is exceptional.
However, we will see in Section~\ref{sec:constr:ts} that the assumption here isn't fulfilled in general.
\end{rem}

\begin{rem}
It is not difficult to find constructible toric systems which are not compatible with certain deformations. Indeed, consider any toric surface $\cX_s$ with multiple invariant minus one curves, and let $\Ts$ be a constructible toric system on $\cX_s$ such that $\tv(\Ts)$ only has a single invariant minus one curve. Then we claim there is a degeneration of $\cX_s$ with which $\Ts$ is not compatible. Indeed, since $\tv(\Ts)$ only has a single invariant minus one curve, there exists a unique blowdown  $\cX_s\to {\cX}_{s}'$  such that $\Ts$ is the augmentation of a constructible toric system on ${\cX}_s'$; let $C$ be the corresponding minus one curve. Now let $\pi$ be any degeneration of $\cX_s$ to some $\cX_0$ such that the vertex $v$ corresponding to $C$ has at least degree $2$ in the corresponding degeneration diagram. Then $\Ts$ is not compatible with $\pi$, since $\pi$ cannot be blown down to have general fiber $\cX_s'$.
\end{rem}

\section{Constructible Toric Systems}
\label{sec:constr:ts}
We now will concentrate on the constructibility of exceptional toric systems on toric surfaces of Picard rank 3 and 4 and prove Main Theorem \ref{mainthm:3}. 
To begin, we consider constructible toric systems on the del Pezzo surfaces of degrees 6 and 7. We then discuss how to connect toric surfaces of Picard rank 3 and 4 via homogeneous degeneration to these surfaces.  To complete the proof, we analyze the behaviour of constructible toric systems under the corresponding maps of Picard groups. We also present an example of an exceptional toric system on a toric surface of rank 5 which is not constructible.

\subsection{Automorphisms and $\dP_6$ and $\dP_7$}\label{sec:auto}
To start out with, note that
the relationship between a toric system $\Ts$ and the associated toric surface $\tv(\Ts)$ is
actually an instance of Gale duality, as observed in \cite{hillperl08}. More precisely, for a
toric system $\Ts = (A_1, \ldots, A_n)$ on a rational surface $X$, there is the exact sequence
\[
\begin{array}{ccccccccc}
0 & \longrightarrow &  \Pic(X) & \longrightarrow & \ZZ^n & \longrightarrow & \coker \cong \ZZ^2 & \longrightarrow & 0 \\
&&  \CO(D) & \mapsto & (D.A_1, \ldots,D.A_n)
\end{array}
\]
The Gale-dual configuration of $\Ts$ is the image of the canonical basis of $\ZZ^n$ in $\ZZ^2$, which are just the rays of $\tv(\Ts)$.
Conversely, given the rays of a toric surface, we obtain a toric system by the dual procedure.
Gale-dual configurations are unique, but only up to isomorphism. This can be reformulated as the following proposition.

\begin{prop}
\label{prop:isometry}
Let $\Ts$ and $\Ts'$ be two toric systems on a rational surface $X$ with $A_i^2 = (A_i')^2$, and consequently $\tv(\Ts)=\tv(\Ts')$.
Then there is an automorphism $\phi \in \Pic(X)$ with $\phi(\Ts) = \Ts'$ which preserves the intersection pairing and the canonical divisor. 
\end{prop}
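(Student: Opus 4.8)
The plan is to exhibit $\phi$ as a comparison between two presentations of $\Pic(X)$ as a quotient of $\ZZ^n$, and then to read off the two preservation properties almost for free. \emph{Throughout, recall that for a smooth complete rational surface the intersection form on $\Pic(X)$ is unimodular, hence in particular non-degenerate.}

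First I would observe that the hypothesis $A_i^2=(A_i')^2$ already forces the full Gram matrices to agree. Writing $G_{ij}=A_i.A_j$ and $G'_{ij}=A_i'.A_j'$, the diagonal entries coincide by assumption, while by Definition~\ref{def:toricsystem} the off-diagonal entry is $1$ when $i,j$ are cyclically adjacent and $0$ otherwise, for both systems; thus $G=G'$. Next I would introduce the two maps $\psi,\psi'\colon\ZZ^n\to\Pic(X)$ defined by $e_i\mapsto A_i$ and $e_i\mapsto A_i'$. Their surjectivity is exactly the assertion that the $A_i$ (resp.\ the $A_i'$) generate $\Pic(X)$, which I would deduce from the Gale-duality sequence displayed just before the proposition: since its cokernel $\ZZ^2$ is free, the sequence splits, and dualizing it while using unimodularity to identify $\Pic(X)^\vee\cong\Pic(X)$ turns the inclusion $D\mapsto(D.A_i)_i$ into precisely the map $\psi$. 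Being the dual of a split injection, $\psi$ (and likewise $\psi'$) is onto.

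The heart of the argument is then to show $\ker\psi=\ker\psi'$. For $c\in\ZZ^n$ the class $\sum_i c_iA_i$ pairs to zero with every $A_j$ exactly when $Gc=0$; because the $A_j$ generate $\Pic(X)$ and the form is non-degenerate, this happens if and only if $\sum_i c_iA_i=0$. Hence $\ker\psi=\{c:Gc=0\}$ and, by the same reasoning, $\ker\psi'=\{c:G'c=0\}$; since $G=G'$ these kernels coincide. Therefore $\phi:=\psi'\circ\psi^{-1}$ is a well-defined automorphism of $\Pic(X)$ with $\phi(A_i)=A_i'$ for all $i$. The two required properties now follow immediately: $\phi(A_i).\phi(A_j)=A_i'.A_j'=G_{ij}=A_i.A_j$, so by bilinearity and the generation property $\phi$ preserves the intersection pairing; and $\phi(-K_X)=\phi(\sum_i A_i)=\sum_i A_i'=-K_X$ by axiom~(3), so $\phi$ fixes the canonical class.

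The main obstacle is the well-definedness of $\phi$ in the third step, and it is genuinely lattice-theoretic rather than formal. Non-degeneracy of the intersection form is what lets the relation lattice $\ker\psi$ be detected by the Gram matrix, and unimodularity is what guarantees that the $A_i$ generate $\Pic(X)$ on the nose. Were the $A_i$ to span only a finite-index sublattice $\langle A_i\rangle\subsetneq\Pic(X)$, the construction would merely produce an isometry $\langle A_i\rangle\to\langle A_i'\rangle$ with no reason to extend to an automorphism of all of $\Pic(X)$; so it is precisely the arithmetic of the rational-surface lattice, supplied by Gale duality, that makes the statement work.
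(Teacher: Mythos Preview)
Your proof is correct and is precisely the Gale-duality argument the paper has in mind. The paper does not give a separate proof; it simply remarks that ``Gale-dual configurations are unique, but only up to isomorphism'' and presents the proposition as a reformulation of this fact. Your write-up unpacks that sentence: you dualize the Gale sequence to see that the $A_i$ generate $\Pic(X)$, identify $\ker\psi$ with the null-space of the common Gram matrix, and then read off $\phi$ together with the two preservation properties. Nothing is missing, and the use of unimodularity is exactly what makes the generation statement hold over $\ZZ$ rather than merely over $\QQ$.
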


\begin{rem}
\label{rem:nicebasis}
Such automorphisms are rare. For the Picard group of del Pezzo surfaces, there are only finitely many such automorphisms, which were studied in \cite[Chapter 4]{manin}.
They may be obtained as follows. On any del Pezzo surface $X \not= \PP^1 \times \PP^1$, there exist special bases $(\CO(H),\CO(R_1),\ldots,\CO(R_n))$ of the Picard group of $X$ which diagonalize the intersection pairing with signature $(1,-1,\ldots,-1)$ and such that the canonical divisor class equals $-3H+\sum R_i$.
The base transformation between two such bases are the mentioned automorphisms.
One such basis may be constructed explicitly as follows.
Indeed, the del Pezzo surface $X$ can be obtained from $\PP^2$ by a sequence of blowup in generic points. Let $\CO(H)$ be
the pull-back of $\CO_{\PP^2}(1)$ and $R_i$ the pull-backs of the exceptional divisors to $X$.
Then $(\CO(H),\CO(R_1), \ldots, \CO(R_{n}))$  
is a basis of the desired type.

We note that such special bases exist for all toric surfaces with the exception of even Hirzebruch surfaces. Indeed, any toric surface $X$ of Picard rank greater than two can be blown down to at least one Hirzebruch surface $\F_{2a+1}$.
There we can choose the basis $(\CO(H)=\CO(Q-aP), \CO(R_1) = \CO(Q-(a+1)P))$ of $\Pic(\F_{2a+1})$ which has the same properties as in the del Pezzo case.
Going back to $X$ by pulling back the divisors and adding the exceptional ones, we get also such a special basis $(\CO(H),\CO(R_1),\ldots,\CO(R_n))$ of $\Pic(X)$.
\end{rem}

We now focus on the del Pezzo surfaces $\dP_7$ and $\dP_6$ of respective degrees $7$ and $6$. These are in fact toric, with $\dP_7=\tv(1,1,1,0,0)$ and $\dP_6=\tv(1,1,1,1,1,1)$.
The above automorphism groups are respectively isomorphic to $\ZZ_2$ and the Weyl group $W(A_1\times A_2)$, which consists of $12$ elements.

\begin{prop}\label{prop:dpconstr}
On $\dP_7$ and $\dP_6$ any toric system is constructible.
\end{prop}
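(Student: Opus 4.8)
The plan is to combine the two structural results established immediately beforehand: Proposition~\ref{prop:toricsysgetsall}, which supplies a \emph{constructible} toric system realizing any prescribed associated toric surface, and Proposition~\ref{prop:isometry}, which says that two toric systems on $X$ with the same self-intersection data differ by an automorphism of $\Pic(X)$ preserving the intersection pairing and the canonical class. The point is that these reduce the statement to a finite check over the small automorphism groups of Remark~\ref{rem:nicebasis}. Concretely, given an arbitrary toric system $\Ts=(A_1,\ldots,A_n)$ on $X\in\{\dP_7,\dP_6\}$, I would set $Y=\tv(\Ts)$. Since a toric system satisfies $-K_X.A_i=A_i^2+2$, Riemann--Roch gives $\chi(\CO(A_i))=A_i^2+2$, hence $b_i=-A_i^2$ in $\tv(\Ts)=\tv(b_1,\ldots,b_n)$; thus the self-intersection sequence of $\Ts$ is recorded by $Y$ itself. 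By Proposition~\ref{prop:toricsysgetsall} there is a constructible toric system $\Ts'$ on $X$ with $\tv(\Ts')=Y$, and after a cyclic permutation or reflection of $\Ts'$ I may align the two $b$-sequences so that $A_i^2=(A_i')^2$ for all $i$. Proposition~\ref{prop:isometry} then yields an automorphism $\phi$ of $\Pic(X)$, preserving the pairing and $K_X$, with $\phi(\Ts')=\Ts$.

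It therefore suffices to show that the operations relating $\Ts'$ to $\Ts$ --- the dihedral symmetries (cyclic permutation and reflection) together with the automorphism $\phi$ --- preserve constructibility. By Remark~\ref{rem:nicebasis} the relevant automorphism group is finite and explicit: it is $\ZZ_2$ for $\dP_7$ and the order-$12$ Weyl group $W(A_1\times A_2)$ for $\dP_6$. Consequently $\dP_7$ and $\dP_6$ admit only finitely many toric systems for a given $Y$, bounded by these groups, and the whole claim becomes a finite verification. For each generator --- the involution exchanging the two exceptional classes on $\dP_7$, together with the $S_3$ permuting the three exceptional classes on $\dP_6$ --- I would exhibit on the resulting toric system an explicit chain of equivariant blow-downs to a Hirzebruch surface, using Lemma~\ref{lemma:augintnum} to track the associated toric variety and Proposition~\ref{prop:exseqFr} to identify the base toric system on $\F_r$, thereby realizing it as an iterated augmentation.

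The hard part will be exactly this last verification. Constructibility is not intrinsic to the lattice $\Pic(X)$ with its pairing and canonical class: it records the existence of a compatible tower of equivariant blow-downs to a Hirzebruch surface, so a pairing-preserving automorphism need not respect it a priori. The content of the proof is thus to confirm, for the finitely many group elements above, that the image systems nonetheless admit such towers. The saving grace is the smallness of the groups (orders $2$ and $12$) combined with the complete list of toric systems on Hirzebruch surfaces from Proposition~\ref{prop:exseqFr}, which together make this enumeration entirely manageable; I expect the $\dP_6$ case, with its richer Weyl-group action, to require the bulk of the bookkeeping.
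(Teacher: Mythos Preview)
Your overall strategy---reduce via Proposition~\ref{prop:isometry} to the finite automorphism groups of Remark~\ref{rem:nicebasis}, so that only finitely many toric systems share a given $\tv(\Ts)$---is exactly the scaffolding the paper uses. The difference lies in how the remaining finite check is organized, and the paper's version neatly sidesteps what you flag as the ``hard part.''

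You propose to fix one constructible $\Ts'$ with the correct $\tv(\Ts')$ (via Proposition~\ref{prop:toricsysgetsall}) and then verify, generator by generator, that each lattice automorphism $\phi$ sends $\Ts'$ to something still constructible. The paper instead argues by counting: since the automorphism group has order $2$ (for $\dP_7$) respectively $12$ (for $\dP_6$), there are at most that many toric systems with a given associated toric variety; it then \emph{produces} exactly that many distinct constructible toric systems directly from the geometry. For $\dP_7$ one uses the unique blowdown to $\PP^1\times\PP^1$ and augments the two families of exceptional toric systems there; for $\dP_6$ one uses the six distinct blowdowns to $\dP_7$, each contributing two families, for a total of $12$. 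Having saturated the upper bound, every toric system must already be on the list.

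The advantage of the paper's counting argument is that it never touches the automorphisms explicitly: the multiplicity comes for free from the multiple blowdown structures, so there is no case-by-case verification that $\phi$ preserves constructibility. Your approach would work, but note one slip: for $\dP_6$ the group $W(A_1\times A_2)$ has order $12$, and the $S_3$ permuting $R_1,R_2,R_3$ accounts only for the $A_2$ factor; you also need the reflection in $H-R_1-R_2-R_3$ (the Cremona involution) to generate the $A_1$ factor, so your list of generators is incomplete as stated.
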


\begin{proof}
We exhibit $2$, respectively $12$, different constructible toric systems on $\dP_7$ and $\dP_6$ yielding the same toric variety. These must be all toric systems by the above discussion.

There is a unique blowdown of $\dP_7$ to $\PP^1\times\PP^1$. 
By augmentation at some fixed position, the two differing families of exceptional toric systems on $\PP^1\times\PP^1$ yield two differing families of constructible toric systems on $\dP_7$.
There are six ways to blow down $\dP_6$, all resulting in $\dP_7$. 
Via augmentation, each such blowdown yields two differing families of constructible toric systems on $\dP_6$ for a total of $12$.
\end{proof}

We now restate our Main Theorem \ref{mainthm:3}:
\begin{theorem}
\label{thm:constr:34}
Let $X$ be a toric surface of Picard rank $3$ or $4$. Then any exceptional toric system on $X$ is constructible.
\end{theorem}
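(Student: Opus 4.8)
Since constructibility always implies exceptionality and fullness (Lemma~\ref{lemma:augex}), only the implication ``exceptional $\Rightarrow$ constructible'' needs proof. The plan is to reduce an arbitrary toric surface of Picard rank $3$ or $4$ to the del Pezzo base cases $\dP_7$ and $\dP_6$, where Proposition~\ref{prop:dpconstr} already asserts that \emph{every} toric system is constructible, and to transport this across a homogeneous deformation by means of the compatibility criterion of Theorem~\ref{thm:compat:constr}. The key point is that, although degeneration need not preserve exceptionality (Remark~\ref{rem:hirz0}), Theorem~\ref{thm:compat:constr} converts the question of constructibility on the special fiber into the purely combinatorial question of compatibility on the general fiber.

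Concretely, given an exceptional toric system $\Ts$ on $X$, I would first realize $X$ as the special fiber $\cX_0$ of a homogeneous deformation $\pi$ whose general fiber $\cX_s$ is a surface on which the statement is already known: for rank $3$ one checks, using Theorem~\ref{thm:toricdef} and Lemma~\ref{lemma:dckn}, that every rank-$3$ toric surface other than $\dP_7$ occurs as the special fiber of a single deformation with general fiber $\dP_7$. By Theorem~\ref{thm:tsdef}, $\inv\bpim(\Ts)$ is then an exceptional toric system on $\cX_s$, hence constructible by Proposition~\ref{prop:dpconstr}. Applying Theorem~\ref{thm:compat:constr} to $\inv\bpim(\Ts)$ and using $\bpim\bigl(\inv\bpim(\Ts)\bigr)=\Ts$ yields
\[
\Ts\ \text{is constructible}\quad\Longleftrightarrow\quad \inv\bpim(\Ts)\ \text{is compatible with }\pi .
\]
So it suffices to show that exceptionality of $\Ts$ forces compatibility of $\inv\bpim(\Ts)$.

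To verify compatibility I would unwind the recursive Definition~\ref{def:compatible} by repeatedly blowing $\pi$ down (Proposition~\ref{prop:blowupblowdown}) until the general fiber becomes a Hirzebruch surface: one blowdown in the rank-$3$ case, two in the rank-$4$ case. At the Hirzebruch bottom the base clause of compatibility reads that a specific toric system on the Hirzebruch special fiber is exceptional, which by Proposition~\ref{prop:exseqFr} is a completely explicit condition. Because augmentation commutes with degeneration (Equation~\eqref{eqn:augdeg} of Theorem~\ref{thm:tsdeg}) and augmentation preserves exceptionality (Lemma~\ref{lemma:augex}), this bottom-level exceptionality is equivalent to exceptionality of $\Ts$ itself; since $\Ts$ is exceptional by hypothesis, compatibility holds and $\Ts$ is constructible. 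This completes the rank-$3$ argument, and the rank-$4$ argument has the same shape, only with one further layer of the recursion.

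I expect the main obstacle to lie in the rank-$4$ case, and precisely in producing the auxiliary deformation $\pi$ and matching its blowdowns with those witnessing constructibility of $\inv\bpim(\Ts)$. Not every rank-$4$ toric surface arises as the special fiber of a deformation with general fiber $\dP_6$: there exist rank-$4$ toric surfaces other than $\dP_6$ that carry no invariant curve of positive self-intersection, to which Theorem~\ref{thm:toricdef} does not apply. These must be treated directly, either by the automorphism count of Proposition~\ref{prop:isometry} exactly as in the proof of Proposition~\ref{prop:dpconstr}, or by blowing down to the already-established rank-$3$ result. The bookkeeping that keeps the compatibility recursion (Definition~\ref{def:compatible}), the augmentation-commutation of Theorem~\ref{thm:tsdeg}, and the divisor correspondence of Proposition~\ref{prop:blowuppicommutes} all referring to the same blowdown is where the hypothesis $\rho\in\{3,4\}$ is essential: the recursion terminates after only one or two steps at a Hirzebruch surface, where Proposition~\ref{prop:exseqFr} gives complete control. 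At rank $5$ the recursion is deeper, this control is lost, and the conclusion genuinely fails, as Example~\ref{ex:nonconstr:5} shows.
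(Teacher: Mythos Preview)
Your reduction to $\dP_7$ and $\dP_6$ via Theorem~\ref{thm:tsdef}, Proposition~\ref{prop:dpconstr}, and the equivalence in Theorem~\ref{thm:compat:constr} is exactly the paper's strategy. The gap is in your verification of compatibility. You write that after blowing $\pi$ down to the Hirzebruch level, ``this bottom-level exceptionality is equivalent to exceptionality of $\Ts$ itself.'' That equivalence holds \emph{only if} the blowdown of $\pi$ you use is one for which $\inv\bpim(\Ts)$ is an augmentation with respect to the induced blowdown $\cX_s\to\cX_s'$. Constructibility of $\inv\bpim(\Ts)$ on $\cX_s$ guarantees that it is an augmentation with respect to \emph{some} blowdown of $\cX_s$, but Proposition~\ref{prop:blowupblowdown}\eqref{prop:blowupblowdown1} produces blowdowns of $\pi$ from invariant $(-1)$-curves on the \emph{special} fiber $\cX_0$, and when $\cX_s=\dP_7$ or $\dP_6$ has strictly more invariant $(-1)$-curves than $\cX_0$, the witnessing blowdown of $\cX_s$ need not extend to $\pi$. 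In that situation the recursion in Definition~\ref{def:compatible} cannot even be started, and your chain of equivalences collapses. Concretely, in the rank-$3$ case the paper finds an entire family $\tilde\Ts_i$ of constructible toric systems on $\dP_7$ whose only constructibility witness contracts the $(-1)$-curve that is \emph{not} the image of a $(-1)$-curve on $X_r$; these are not compatible with $\pi$ for any $i\neq 0$, so your argument would have to show $\bpim(\tilde\Ts_i)$ is non-exceptional, which is precisely what you have not done.

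The paper closes this gap by splitting the deformations into two kinds. For those in which $\cX_s$ and $\cX_0$ have the same number of invariant $(-1)$-curves (the arrows $\defto$), every blowdown of $\cX_s$ extends to $\pi$ and your argument goes through verbatim; this is Lemma~\ref{lemma:goodcases}. For the finitely many remaining deformations $\baddefto$ (e.g.\ $X_2,X_3\baddefto\dP_7$ and the analogous rank-$4$ cases), the paper carries out explicit cohomology computations: for each constructible toric system on the general fiber that is \emph{not} compatible, it exhibits a partial sum $\sum A_l$ whose image under $\bpim$ is effective (or Serre-dual to an effective divisor), so that $\bpim(\Ts)$ fails to be exceptional. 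This case analysis is the actual content of the theorem and cannot be bypassed; the obstacle you identify in your final paragraph (surfaces with no positive-self-intersection invariant curve) is not the real issue.
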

Let us briefly explain the idea of the proof. We first show that
$\dP_7$ homogeneously degenerates directly to 
any Picard rank $3$ toric surface, and that $\dP_6$ homogeneously degenerates in a finite number of steps to any Picard rank $4$ toric surface. We then use the induced isomorphisms of Picard groups to compare toric systems. By Theorem \ref{thm:tsdef} and Proposition \ref{prop:dpconstr}, we know that all exceptional toric systems on any rank 3 or 4 surface come from constructible toric systems on $\dP_7$ or $\dP_6$. Thus, we must check for each degeneration we consider that, for any constructible toric system degenerating to something nonconstructible, the degenerated toric system is no longer exceptional. For all but finitely of our degenerations, this will follow by considering blowdowns of the degeneration in question. The remaining finitely many cases may then be dealt with by hand.

\subsection{Homogeneous Deformations to $\dP_6$ and $\dP_7$}

From Lemma \ref{lemma:dckn}, we have that rank 3 toric surfaces are exactly those of the form
\begin{align*}
X_r=\tv(r,1,1,-r+1,0) \qquad r\geq 0.
\end{align*}
 Note that $X_0=X_1=\dP_7$. By considering all possible invariant blowups of these surfaces, one easily confirms that rank 4 toric surfaces are exactly those of the form
\begin{align*}
X_r^A&=\tv(r+1,1,1,-r+1,1,1)\qquad &r\geq 0;\\
X_r^B&=\tv(r+1,1,2,1,-r+1,0)\qquad &r\geq 0;\\
X_r^C&=\tv(r+1,2,1,2,-r,0)\qquad &r\geq 0.
\end{align*}
Here, we have $X_0^A=\dP_6$.

These surfaces are all related by homogeneous deformations as depicted in Figure \ref{fig:defgraphfour}. Here, arrows of the form $\baddefto$ and $\defto$ represent homogeneous deformations. All pictured deformations of the form $\defto$ can be constructed as in Theorem~\ref{thm:toricdef}. For Picard rank $3$, the deformations of the form $\baddefto$ are depicted in Figure~\ref{fig:tildehirze}. We leave it to the curious reader to find degeneration diagrams giving the remaining such deformations.

\begin{lemma}\label{lemma:goodcases}
Let $\pi\colon\cX\to\base$ be a homogeneous deformation giving rise to an arrow of the form $\defto$, and $\Ts$ a constructible toric system on $\cX_s$. Then $\bpim(\Ts)$ is constructible.
\end{lemma}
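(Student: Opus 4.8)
The plan is to translate the statement into the language of \emph{compatibility} and then induct on the Picard rank. By Theorem~\ref{thm:compat:constr}, $\bpim(\Ts)$ is constructible if and only if $\Ts$ is compatible with $\pi$, so it suffices to show that \emph{every} constructible toric system on $\cX_s$ is compatible with a deformation $\pi$ of the form $\defto$. The feature of such a $\pi$, i.e. one built as in Theorem~\ref{thm:toricdef}, that I would exploit is that both fibers are toric and the degeneration diagram has the ``star'' shape constructed there: every vertex of $\M_0$ and $\M_s$ is a leaf except the two copies of the origin, which form a hub joined to all vertices on the opposite side. By the blowdown analysis preceding Proposition~\ref{prop:blowupblowdown}, an invariant $(-1)$-curve of $\cX_s$ can be contracted compatibly with $\pi$ exactly when the corresponding vertex is a leaf (or the curve is a $D_\pm$); contracting a hub curve $D_{0,0}$ or $D_{s,0}$ disconnects $G$ and so is never realized by a blowdown of $\pi$.

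For the base case $\rho=2$ the general fiber $\cX_s$ is a Hirzebruch surface, and a constructible toric system is simply an exceptional one. Here Proposition~\ref{prop:hirzebruchdegen} shows that $\bpim(\Ts)$ is again exceptional, so the first clause of Definition~\ref{def:compatible} gives compatibility. The only delicate point is $\cX_s=\F_0$: one must work with the basis and degeneration diagram for which $\bpim(\CO(P))=\CO(P)$, as in Remark~\ref{rem:hirz0}, so that the flip diagram $\tilde G$ is excluded and exceptionality is indeed preserved.

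For the inductive step, constructibility of $\Ts$ means $\Ts=\Aug_i\Ts'$ for some invariant blowdown $\cX_s\to\cX_s'$ with exceptional divisor $R$, where $\Ts'$ is constructible on $\cX_s'$. The crux is to arrange this deaugmentation so that $R$ is a \emph{leaf} curve rather than a hub curve. Granting that, Proposition~\ref{prop:blowupblowdown} blows $\pi$ down to a deformation $\pi'$ with general fiber $\cX_s'$, which is again of the form $\defto$ since deleting a leaf of a star diagram leaves a star diagram with toric fibers. The induction hypothesis makes $\Ts'$ compatible with $\pi'$, and then the second clause of Definition~\ref{def:compatible} makes $\Ts$ compatible with $\pi$, completing the argument.

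The hard part is the crux just isolated: showing that a constructible $\Ts$ on $\cX_s$ always admits a deaugmentation contracting a leaf $(-1)$-curve. A priori $\Ts$ could deaugment only along the hubs $D_{0,0},D_{s,0}$, and such contractions are genuinely incompatible with $\pi$ --- this is precisely the obstruction behind the failure examples in the final Remark of Section~\ref{sec:compatibility}. I would settle it by a direct combinatorial study of the fan $\Sigma'$ of $\cX_s$ from the proof of Theorem~\ref{thm:toricdef}, where the hub rays are $\rho_0'=(0,-1)$ and $\rho_\alpha'=(0,1)$ with self-intersections pinned down by the deformation data $b_0',b_\alpha'$, combined with Lemma~\ref{lemma:augintnum} identifying the admissible deaugmentation positions with the $(-1)$-rays of $\tv(\Ts)$. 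I expect the finitely many configurations in which this analysis does not immediately locate a leaf deaugmentation to be exactly those the surrounding discussion defers to a check by hand.
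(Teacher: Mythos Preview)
Your framework---translate to compatibility and induct on Picard rank---matches the paper's, but you leave the crux unresolved and then misidentify how it is settled. The paper's proof rests on a single observation you do not make: for each arrow of the form $\defto$ in Figure~\ref{fig:defgraphfour}, the special and general fibers have \emph{the same number} of invariant $(-1)$-curves. This is just a direct count on the explicit tuples $X_r$, $X_r^A$, $X_r^B$, $X_r^C$. Once you know this, Proposition~\ref{prop:blowupblowdown}(\ref{prop:blowupblowdown1}) gives an injection from $(-1)$-curves on $\cX_0$ to $(-1)$-curves on $\cX_s$ (each blowdown of $\cX_0$ determines a blowdown of $\pi$, hence of $\cX_s$), and equal cardinality forces this to be a bijection. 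Thus \emph{every} invariant $(-1)$-curve on $\cX_s$ arises from a blowdown of $\pi$---your ``leaf vs.\ hub'' worry simply evaporates, with no combinatorial study of $\Sigma'$ and no residual cases.

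Two further points where your sketch goes astray. First, your handling of $\F_0$ is off: you cannot choose the degeneration diagram, since $\pi$ is given. The paper instead observes that, for the $\defto$ arrows in question, blowing $\cX_s$ down never reaches $\F_0$ (in rank~3 one lands on $\F_r$ with $r\geq 1$; in rank~4 a further blowdown again avoids $\F_0$), so Proposition~\ref{prop:hirzebruchdegen} applies without the caveat of Remark~\ref{rem:hirz0}. Second, your expectation that some configurations get deferred to a by-hand check conflates the two arrow types: the by-hand analysis in the surrounding section is entirely for the $\baddefto$ arrows, where the $(-1)$-curve counts genuinely differ; the $\defto$ arrows are handled uniformly by the counting argument above.
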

\begin{proof}
	We first observe that for any of the given deformations, $\cX_s$ and $\cX_0$ have the same number of exceptional divisors. Thus, any blowdown of the \emph{general} fiber $\cX_s$ also induces a blowdown of $\pi$. 
	
	Since $\Ts$ is constructible, there is some blowdown $\cX_s'$ of the general fiber with a constructible toric system $\Ts'$ such that $\Ts$ arises from $\Ts'$ via augmentation. Let $\pi'$ be the corresponding blowdown of $\pi$. Now suppose that the Picard rank of $\cX_s$ is 3. Then $\cX_s'$ is some Hirzebruch surface $\F_r$ for $r\geq 1$, and by Proposition \ref{prop:hirzebruchdegen}, $\overline{(\pi')}^\circ(\Ts')$ is constructible. It follows from Theorem \ref{thm:tsdeg} that $\bpim(\Ts)$ is also constructible.

	Suppose instead that the Picard rank of $\cX_s$ is 4. A straightforward calculation shows that the special and general fibers of $\pi'$ both have two exceptional divisors, and that $\cX_s'$ doesn't blow down to $\F_0$. Blowing down $\pi'$ again and applying Theorem \ref{thm:tsdeg} twice then leads to the desired conclusion.
\end{proof}

\begin{figure}[htbp]
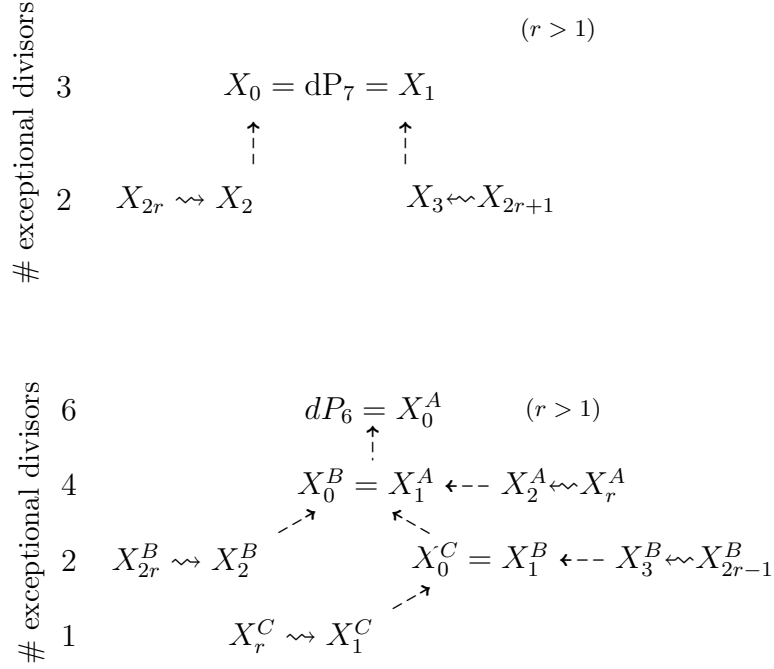

        \begin{center}
\defgraphthree
\vspace{1cm}

\defgraphfour
        \end{center}
        \caption{Deforming homogeneously to $\dP_7$ and $\dP_6$.}\label{fig:defgraphfour}
\end{figure}

\subsection{Remaining Cases}
As a consequence of Lemma \ref{lemma:goodcases}, we now only need to check that for arrows of the form $\baddefto$, the corresponding map of Picard groups maps constructible toric systems to either constructible toric systems, or non-exceptional toric systems. We show how to do this completely for the Picard rank 3 case, and present some of the cases for Picard rank 4. The curious reader may find the remaining cases carried out in detail in \cite{hochen}.

\vspace{.1cm}\noindent{\framebox{$X_2\baddefto \dP_7$ and $X_3\baddefto \dP_7$}}\hspace{.2cm}
We  now focus on the toric systems on $\dP_7$.
We can express the exceptional toric system $\Ts_i = \Ts_{1,i}$ on $\F_1$ from Proposition~\ref{prop:exseqFr} 
 in the special basis $(\CO(H),\CO(R_1))$ of $\Pic(\F_1)$ from Remark \ref{rem:nicebasis}  as follows (recall that $P = H-R_1, Q = H$):
\[
\Ts_i = (H-R_1, (i+1)H-iR_1, H-R_1, -iH+(i+1)R_1)
\]
Augmenting $\Ts_i$ to get a toric system on $\dP_7$ doesn't really depend on the position, since the results only differ by cyclic permutation or reflection, so we can write
\begin{equation}
\label{eq:tildeTS}
\tilde \Ts_i = \Aug \Ts_i = (H-R_1-R_2,R_2,(i+1)H-iR_1-R_2, H-R_1, -iH+(i+1)R_1)
\end{equation}
The only automorphism of $\Pic(\F_1)$ preserving the intersection pairing and canonical class comes from swapping $R_1$ and $R_2$.
So there are two different families of exceptional toric systems $\tilde \Ts_i$ and $\tilde \Ts^{\text{swap}}_i$ on $\dP_7$ up to permutation and reflection. Note that $\tilde \Ts_0$ and $\tilde \Ts^{\text{swap}}_0$ only differ by cyclic permutation and a reflection.

When deforming from $X_2$ or $X_3$ to $\dP_7$, only one of these two exceptional divisors $R_i$ on $\dP_7$ is mapped to an exceptional divisor on $X_r$ by $\bpim$, say $R_1$ is that divisor. So $\tilde \Ts^{\text{swap}}_i$ is automatically mapped by $\bpim$ to a constructible toric system on $X_2$ or $X_3$, respectively.
Since the image of $\tilde \Ts_i$ under $\bpim$ isn't constructible for $i\not=0$, we must show that it is also not exceptional.
Fix now the basis $(\CO(H),\CO(R_1),\CO(R_2))$ as in Remark~\ref{rem:nicebasis} such that $\bpim(R_2)$ is not an exceptional divisor.

Let $i$ be an non-zero integer.
Since $\tilde \Ts_i = (A_1,\ldots,A_5)$ is exceptional, $H^\bullet(\CO(\sum_{l=j}^k -A_l))$ vanishes for $1\leq j\leq k<5$. In particular, the cohomology of 
\[
E^{(i)}_1 = \CO(- A_3) = \CO(-(i+1)H+iR_1+R_2\]
and
\[E^{(i)}_2 = \CO\left(\sum_{l=2}^4 -A_l\right) = \CO(-(i+2)H+(i+1)R_1)
\]
vanishes. 

The homogeneous deformation $X_2 \baddefto X_0 = \dP_7$ and $X_3 \baddefto X_1 = \dP_7$ are depicted in Figure~\ref{fig:tildehirze}. We can compute the corresponding basis $(\CO(H),\CO(R_1),\CO(R_2))$ by the procedure from Remark~\ref{rem:nicebasis}, and moreover, the image of this basis and $E^{(i)}_1$ under $\bpim$. 
The results are noted in Table~\ref{tab:tildehirze}, where we have written divisor classes for readability.
We see that $\bpim(E^{(i)}_1)$ is effective for $i<0$, so that $H^0(\bpim(E^{(i)}_1))\not=0$.
It then follows immediately that the Serre-dual of $\bpim(E^{(i)}_2)$ is effective for $i>0$, 
since 
\[
\CO(K)\otimes (E^{(i)}_2)^{-1} = \CO(-3H+R_1+R_2\ +\ (i+2)H-(i+1)R_1) = \CO((i-1)H-iR_1+R_2)=E^{(-i)}_1\!.
\]
In any case, $\bpim(\tilde \Ts_i)$ is not exceptional for $i \not=0$.
This completes the proof that on a toric surface $X$ of Picard rank $3$, any exceptional toric system is constructible.

\begin{figure}[htbp]
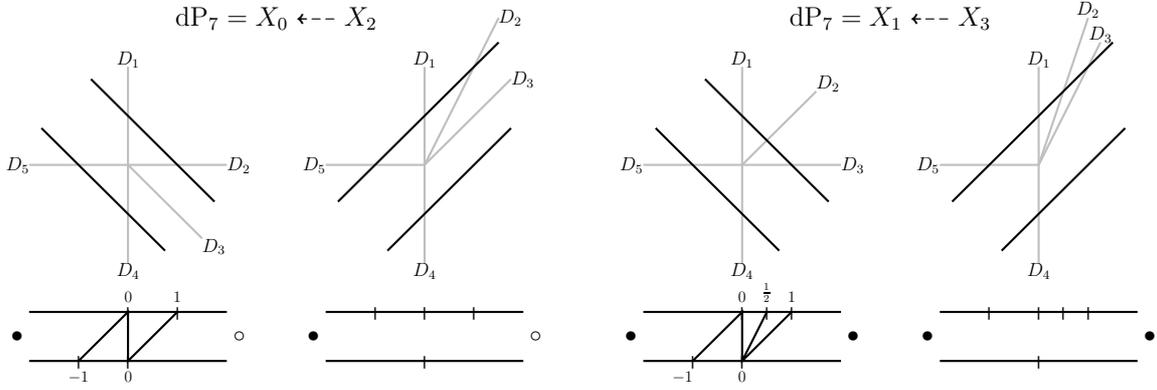

        \begin{center}
\figDPthreeA
\figDPthreeB
        \end{center}
	\caption{Homogeneous deformations from $X_2$ and $X_3$, respectively, to $\dP_7$.}\label{fig:tildehirze}
\end{figure}

\begin{table}[htbp]
\begin{tabular}{l|l|l}
& $X_2 \baddefto X_0$ & $X_3 \baddefto X_1$\\
\hline
$(H,R_1,R_2)$ & $(D_4+D_5,D_2,D_4)$ & $(D_1+D_5,D_3,D_1)$\\
 & \multicolumn{1}{|r|}{(using: $ \TVdPtB \rightarrow \TVFoneB \rightarrow \TVPtwoB $ )} &
\multicolumn{1}{|r}{(using:  $ \TVdPtA  \rightarrow \TVFoneA \rightarrow \TVPtwoA  $)} \\
$\bpim(H,R_1,R_2)$ & $(D_1+D_2+D_5,D_2,D_1+D_2)$ & $(D_1+2D_5,D_3,D_1+D_5)$ \\
$\bpim(E^{(i)}_1)$ & $ -iD_1-(i+1)D_5 $ & $-i(D_1+2D_5)+iD_3$ \\
 & & \multicolumn{1}{|r}{$\sim -i(D_1+2D_2+D_3)$} \\
 & & \multicolumn{1}{|r}{ (using: $ D_5 \sim D_2+D_3$)}\\
\end{tabular}
\caption{}\label{tab:tildehirze}
\end{table}

\vspace{.1cm}\noindent{\framebox{$X^B_3 \baddefto X^B_1$ and $X^A_2 \baddefto X^A_1$}}\hspace{.2cm}
Let $\Ts$ be a constructible toric system on the general fiber $\cX_s$, which is $X^B_1$ or $X^A_1$, respectively. Since the general fiber has the same number of exceptional divisors as the special one $\cX_0$, we can argue in the same way as in the proof of Lemma~\ref{lemma:goodcases} that $\bpim(\Ts) = \Aug_i \bpimpr(\Ts')$ for some $i$ and some constructible toric system $\Ts'$ on a blowdown $\cX_s'$ of the general fiber.
Since in the previous case we have shown the statement of Theorem \ref{thm:constr:34} already for Picard rank $3$, $\bpimpr(\Ts')$ is either constructible or not exceptional. By Lemma~\ref{lemma:augex} this also holds for $\bpim(\Ts)$.

\vspace{.1cm}\noindent{\framebox{$X^C_1 \baddefto X^C_0$}}\hspace{.2cm}
Since the image of the exceptional divisor $D_1$ on $X_0^C$ isn't exceptional on $X_1^C$,
the corresponding blowdown $X_0^C = \tv(1,2,1,2,0,0) \rightarrow \tv(1,1,2,0,-1) \cong X_2$ cannot be extended to a blowdown of the deformation $X^C_1 \baddefto X^C_0$.
We thus have to look at the augmentations of the exceptional toric systems on $X_2$. By the above discussion, they are of the form 
$\tilde\Ts_i\coloneqq\pim(\tilde\Ts_i^{\text{swap}})$ with 
$\tilde\Ts_i^{\text{swap}}$ as above an exceptional toric system on $\dP_7$ and $\pi$ the deformation $X_2 \baddefto \dP_7$.
Consider the special basis $(\CO(H),\CO(R_1),\CO(R_2))$ of $\Pic(X_2)$ from Remark~\ref{rem:nicebasis}.
By pulling back and setting $R_3=D_1$ we get the basis $(\CO(H),\CO(R_1),\CO(R_2),\CO(R_3))$ of $\Pic(X_0^C)$.
With respect to the basis $(\CO(H),\CO(R_1),\CO(R_2))$, note that 
 $\tilde\Ts_i$ on $X_2$ is as in Equation~\eqref{eq:tildeTS}.
Now, the augmentation of $\tilde\Ts_i$ depends on the position.
In the Table \ref{tab:picfour}, we record all these augmentations $\hat\Ts^j = \Aug_j \tilde\Ts_i$ and their behaviour under the map $\bpim \colon \Pic(X_0^C) \rightarrow \Pic(X_1^C)$.
More precisely, we discuss whether the elements $\bpim(\CO(\sum_{k=l}^m -A_k))$ give rise to an exceptional sequence, i.e. have no cohomology.
Here we write $\hat\Ts^j=(\hat A^j_1, \ldots, \hat A^j_6)$.
As is noted in the table, the only cases which are exceptional are in fact those which are constructible.

\begin{table}
\begin{tabular}{c|l}
$\hat\Ts^1$ & $\pim(-\hat A^1_3)$ effective. \\
$\hat\Ts^2$ & $\pim(-\hat A^2_2)$ effective. \\
$\hat\Ts^3$ & $\pim(-\hat A^3_3)$ effective for $i<0$, \\
& Serre dual of $\pim(-\hat A^3_1 - \ldots - \hat A^3_4)$ effective for $i>0$, \\
& constructible for $i=0$.\\
$\hat\Ts^4$ & $\pim(-\hat A^4_3)$ effective for $i<-1$,\\
& Serre dual of $\pim(-\hat A^4_1 - \ldots - \hat A^4_5)$ effective for $i>-1$,\\
& constructible for $i=-1$.\\
$\hat\Ts^5$ & $\pim(-\hat A^5_3)$ effective for $i<-1$,\\
& $\pim(-\hat A^5_5)$ effective for $i>-1$,\\
& constructible for $i=-1$.\\
\end{tabular}
\caption{}\label{tab:picfour}
\end{table}

More details for this case and for the similar remaining cases may be found in \cite{hochen}.

\subsection{Toric Surfaces of Higher Picard Rank}
For toric surfaces of higher Picard rank, not all exceptional toric systems are constructible. Indeed, we may consider the following example. 

\begin{ex}[A nonconstructible exceptional toric system]
\label{ex:nonconstr:5}
Let $X$ be the toric surface $\tv(2,1,1,1,1,2,1)$. A straightforward calculation shows that the toric system
\begin{multline*}
\Ts = (- D_4+D_7, D_3 + D_4 -D_7, - D_3+D_7, D_2 + 4 D_3 + 3 D_4-D_7, \\
-D_2 + D_4 + D_5, D_2 + D_3 - D_5, - 3 D_3 - 2 D_4 + D_5-D_7)
\end{multline*}
is exceptional, where $D_i$ is the invariant divisor with self-intersection $-b_i$. On the surface $X$, the exceptional divisors are $D_2,D_3,D_4,D_5$ and $D_7$, and they form a basis for the Picard group. Since the elements of $\Ts$ are already linear combinations of these divisors, none can be an exceptional divisor. Thus,
we see that $\Ts$ can't be constructible.
We want to stress that we can't apply Lemma~\ref{lemma:augex} here to deduce fullness.
In the subsequent article \cite{hochen:11} by the first author, the fullness is established since the example above differs from a constructible toric system only by an autoequivalence of the derived category (namely a spherical twist).
\end{ex}

\section{Noncommutative Deformations}
\label{sec:noncomdef}
We now demonstrate how the degenerations of exceptional sequences we consider can be used to construct deformations of derived categories.
If $\Ex=(E_1,\ldots,E_n)$ is a full strongly exceptional sequence (not necessarily of line bundles) on a variety $X$, then the corresponding \emph{tilting sheaf} is defined to be 
$$
\tilt=\bigoplus E_i.
$$
There is then an equivalence of categories between $\D^b(X)$ and $\D^b(\EN(\tilt)\Mod)$, see \cite{MR992977}. The algebra $\EN(\tilt)$ can be described as a finite path algebra with relations, see \cite{perling09} for examples.  Note that if $\Ex$ is a full strongly exceptional sequence of line bundles on a rational surface $X$,  then we have
$$
\EN(\tilt)=\bigoplus_{j\leq k<n} H^0\Big(X,\CO\Big(\sum_{i=j}^k A_i\Big)\Big)
$$
where $\Ts$ is the toric system corresponding to $\Ex$.

Let $\Gamma$ be a family of algebras parametrized over a base variety $\base$ such that for every $t\in \base$, the corresponding algebra $\Gamma_t$ has the form $\End(\T)$ for some tilting sheaf $\T$ on some variety. We loosely call the family $\Gamma$ a \emph{noncommutative deformation} as it offers a way of ``deforming'' varieties via derived categories. Several concrete examples are presented in Section $7$ of \cite{perling09}.
Now suppose that $\pi\colon\cX \to \base$ is a homogeneous deformation of a rational $\CC^*$-surface $\cX_0$ and $\Ex$ is a full strongly exceptional sequence of lines bundles on $\cX_s$ which degenerates to a full strongly exceptional sequence. This data naturally gives rise to a noncommutative deformation.
Indeed, representing the elements of the corresponding toric system $\Ts$ by divisors $A_i$,
$$\Gamma_t=\bigoplus_{j\leq k<n} H^0\Big(\cX_t,\CO\Big(\sum_{i=j}^k (A_i^\tot)_{|{\cX_t}}\Big)\Big)$$
gives a family of endomorphism algebras with $\Gamma_0$ and $\Gamma_s$ describing the derived categories of the special and general fibers $\cX_0$ and $\cX_s$.

We wish to describe such families explicitly in the case of Hirzebruch surfaces by defining a family of quivers.
Fix some $r\geq 0$ and $i>0$, and $0<\alpha< i$. We then have the deformation $\pi$ from $\F_{r+2\alpha}$ to $\F_r$ given by the degeneration diagram $(\M(r,\alpha),G)$ from Example \ref{ex:hirzedefdiv}. Furthermore,
the toric system $\Ts_{r,i}$ is a strongly exceptional toric system on $\F_r$, which degenerates to the strongly exceptional toric system $\Ts=\Ts_{r+2\alpha,i-\alpha}$ on $\F_{r+2\alpha}$.
To calculate $\Gamma_t$ for $t \in \base$, we thus need to know the cohomology groups $H^0(\F_r,\CO(iP+Q))$, $H^0(\F_r,\CO(P))$, $H^0(\F_{r+2\alpha},\CO((i-\alpha)P+Q))$, and $H^0(\F_{r+2\alpha},\CO(P))$. These can be calculated using standard toric methods. We will represent $P$ and $Q$ as divisors on $\F_r$ and $\F_{r+2\alpha}$ as we have in Example \ref{ex:hirzedefdiv}. That is, on $\F_r$ we represent $P$ and $Q$ by respectively $D_{t,0}$ and $D_{t,1/\alpha}$, and on $\F_{r+2\alpha}$ we represent $P$ and $Q$ by respectively $D_{t,1/\alpha}$ and $(r+\alpha)D_{t,-1/(r+\alpha)}+D_{0,0}+\alpha D_{0,1/\alpha}$. In Figure \ref{fig:gs}, we present polytopes where each lattice point corresponds to a monomial element of the basis of the relevant cohomology group. Note that for $t\neq 0$, instead of having monomials in the usual variables $x$ and $y$ we have monomials in the variables $x$ and $\frac{y}{y-t}$.

\begin{figure}[htbp]
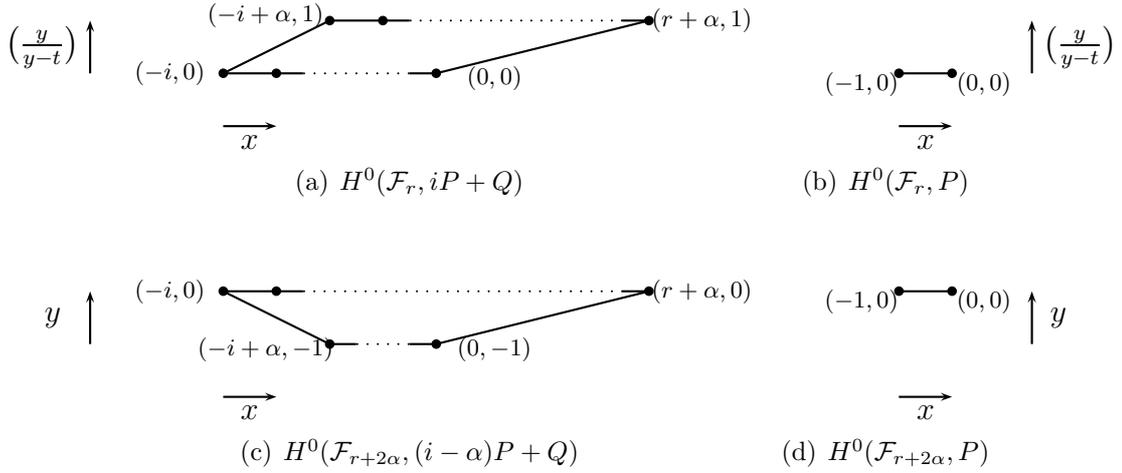

	\begin{center}\subfigure[$H^0(\F_r,iP+Q)$]{\tsgsi}
	\subfigure[$H^0(\F_r,P)$]{\tsgsiii}
		\subfigure[$H^0(\F_{r+2\alpha},(i-\alpha)P+Q)$]{\tsgsii}
		\subfigure[$H^0(\F_{r+2\alpha},P)$]{\tsgsiiii}
	\end{center}
	\caption{Global Sections of Toric Systems on $\F_r$ and $\F_{r+2\alpha}$}\label{fig:gs}
\end{figure}

We first concern ourselves with the global sections of $\CO(iP+Q)$ and $\CO((i-\alpha)P+Q)$ on $\F_r$ and $\F_{r+2\alpha}$, respectively. For $t\in \base$, let $b_{j}^{(t)}=x^j$ and let $d_{j}^{(t)}=x^j \frac{y}{y-t} $. Note that for $t\neq 0$,  we have that $b_{-i}^{(t)},\ldots,b_0^{(t)},d_{-i+\alpha}^{(t)},\ldots,d_{r+\alpha}^{(t)}$ is a basis for $H^0(\F_r,\CO(iP+Q))$. However, for $t=0$ we run into difficulties, since  $b_j^{(0)}=d_j^{(0)}$. Now set 
$$c_{j}^{(t)}=\frac{1}{t}(d_j^{(t)}-b_j^{(t)}).$$
Note then that $c_{j}^{(0)}=y^{-1}$. It follows that 
$$b_{-i}^{(t)},\ldots,b_{0}^{(t)},c_{-i+\alpha}^{(t)},\ldots,c_0^{(t)},d_1^{(t)},\ldots,d_{r+\alpha}^{(t)}$$
is a basis  of $H^0(\F_r,\CO(iP+Q))$ for $t\neq 0$ and of $H^0(\F_{r+2\alpha},\CO((i-\alpha)P+Q))$ for $t=0$. Furthermore, this basis is compatible with $\pi$ insofar as,
considering $t$ as the deformation parameter, these rational functions in $x, y$ and $t$ give sections of the relevant divisor lifted to $\cX$.
On the other hand, the monomials $x^{-1}$ and $x^{0}=1$  form a basis of both $H^0(\F_r,\CO(P))$ and $H^0(\F_{r+2\alpha},\CO(P))$, which is compatible with $\pi$ in the same sense. For a further discussion of deformations of sections in this context, see \cite[Section 6]{ilten:11e}.

\begin{figure}[htbp]
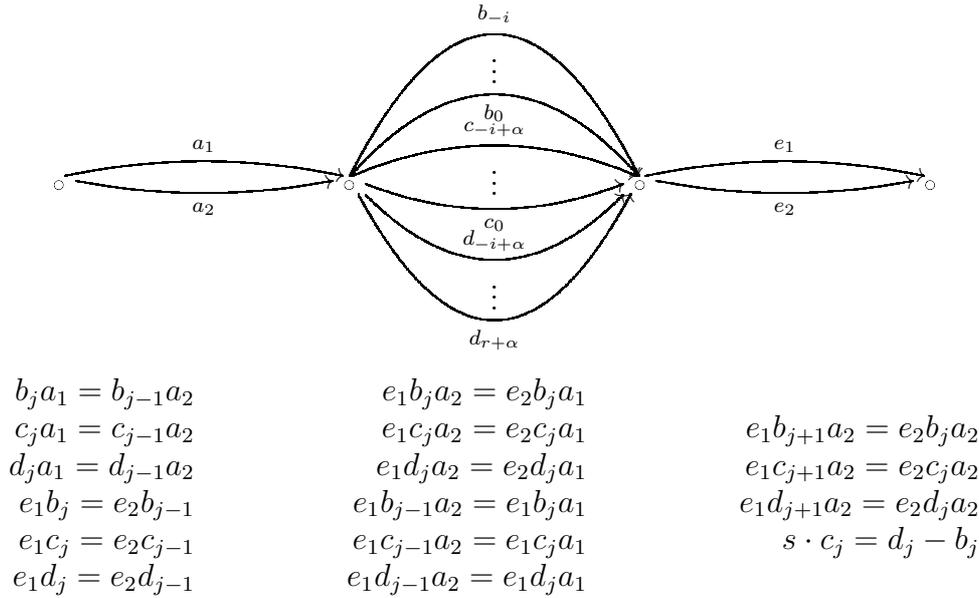

	\begin{center}
	{\quiverex}
	\end{center}
	\caption{A family of quivers}\label{fig:quiverex}
\end{figure}

Figure \ref{fig:quiverex} illustrates a family of quivers. We claim that the corresponding family of path algebras is in fact the desired noncommutative deformation. Indeed, fix some $t\in \base$. The paths $b_j$, $c_j$, and $d_j$ correspond to the global sections $b_j^{(t)}$, $c_j^{(t)}$, $e_j^{(t)}$ of $\CO(iP+Q)$ or $\CO((i-\alpha)P+Q)$, whereas $a_1,a_2,d_1,d_2$ correspond to the global sections $x^{-1},x,x^{-1},x$ of $\CO(P)$. One easily checks that for fixed $t$, the relations in Figure \ref{fig:quiverex}  are precisely the relations needed to represent $\Gamma_t$. Note that in particular, for $t\neq 0$ we can write $c_j$ in terms of $b_j$ and $d_j$, and for $t=0$ we have the relation $b_j=d_j$.

In Section 7 of \cite{perling09}, there is a similar parameterization of path algebras corresponding to Hirzebruch surfaces, which has the advantage that it can contain arbitrarily many Hirzebruch surfaces. However, our construction has the nice attribute that it directly corresponds to a real deformation. The construction we presented here can in fact be easily generalized to construct a noncommutative deformation coming from an arbitrary homogeneous deformation of rational $\CC^*$-surfaces and a strongly exception sequence $\Ex$ on the general fiber degenerating to a strongly exceptional sequence. 

\svjour{
\begin{acknowledgements}
\danksagung
\end{acknowledgements}
}

\bibliographystyle{alpha}
\bibliography{new-e-seq-def}

\end{document}